\documentclass[12pt,letterpaper]{amsart}
\makeatletter
\@namedef{subjclassname@2020}{%
\textup{2020} Mathematics Subject Classification}
\makeatother
\usepackage{geometry,xcolor}
\usepackage{setspace}
\usepackage{mathpazo} 
\usepackage[colorlinks,allcolors=blue]{hyperref} 
\usepackage{xpatch,multirow}
\xpatchcmd{\proof}{\itshape}{\bfseries}{}{}
\newtheorem{theorem}{Theorem}

\newtheorem{corollary}{Corollary}
\newtheorem{lemma}{Lemma}

\theoremstyle{remark}
\newtheorem{remark}{Remark}

\usepackage{soul}
\allowdisplaybreaks

\title{The curvature estimation of the complete K\"ahler-Einstein metrics on the disk bundles}
\author{Yihong Hao}
\address[Yihong Hao]{Department of Mathematics, Northwest University, Xi'an \rm{710127}, China}
\email{haoyihong@nwu.edu.cn}

\author{Mingming Chen}
\address[Mingming Chen]{School of Mathematics and Statistics, Henan Normal University, Xinxiang 453007, China}
\email{chenmingming@htu.edu.cn}

\author{An Wang}
\address[An Wang]{School of Mathematical Sciences, Capital Normal University, Beijing \rm{100048}, China}
\email{wangan@cnu.edu.cn}

\author{Ben Zhang}
\address[Ben Zhang]{Department of Mathematics, Northwest University, Xi'an \rm{710127}, China}
\email{alex\_zhangben@163.com}

\subjclass[2020]{32Q05, 32Q20,  32L05, 32Q02}
\keywords{negative curvature manifold, K\"ahler-Einstein manifold, holomorphic vector bundle, Hartogs domain}
\date{\today}
\begin{document}
	
\begin{abstract}
In this paper, we study  some geometric properties of the disk bundles  over the complete K\"ahler manifolds.
When the base space is the complete K\"ahler-Einstein manifold, we
compute the holomorphic sectional curvature, Riemannian sectional curvature of the complete K\"ahler-Einstein metric on the disk bundle.
Moreover, we present a parametric criteria for whether the manifold admits negative pinched property.
When the base space is a bounded pseudoconvex domain equipped with its complete K\"ahler metric,
the corresponding ball  bundle in the $k$-direct sum of line bundles is a bounded pseudoconvex Hartogs domain with fiber dimension $k$ . If
the base domain is simply-connected and $k\geq2$, we prove that the Bergman metric on such a Hartogs domain is K\"ahler-Einstein if and only if
the domain is biholomorphically equivalent to a unit ball.
\end{abstract}
\maketitle

\section{Introduction}
Let $(M, g_{M})$ be a  K\"ahler manifold, and  $\pi:L\rightarrow M$ be a Hermitian line bundle equipped with a
Hermitian fiber metric $h$ whose curvature $\Theta$ satisfying $-\frac{\sqrt{-1}}{2}\Theta=\partial\overline{\partial}\log h=l\omega_{M}$ for $l\in \mathbb{R}$. Let $f$ be a smooth real-valued  function on $[0,+\infty)$.
Then the  $(1, 1)$-form
\begin{equation}\label{equ:CAn}
\omega_{f}=\pi^{*}(\omega_M)+\frac{\sqrt{-1}}{2}\partial\bar\partial f(|\zeta|_{h}^{2})
\end{equation}
is well defined on the total space of  $L$. It is called Calabi ansatz.
The disk bundle is defined by
 \begin{equation}
 D(L) := \{\zeta  \in L : |\zeta|_{h}^{2} < 1\},
 \end{equation}
where $|\zeta|_{h}$ is the norm of $\zeta$ with respect to the metric $h$.
Under the assumption that $g_{M}$ is a K\"ahler-Einstein metric with  Ricci curvature $\lambda_{0}$,
 Calabi \cite{Calabi1979}  reduced the problem of the existence of the K\"ahler-Einstein
metrics to a problem of solving ordinary differential equations.
 He proved that the form \eqref{equ:CAn} induces  a complete K\"ahler-Einstein metric with Einstein constant $\lambda$ on $ D(L)$  if and only if:
(a) $M$ is complete with respect to  $g_{M}$;
(b)  $\lambda=\lambda_{0}-l$;
(c) $l>0$ and $\lambda\leq0$.

 In \cite{Bland1986}, Bland described the complete K\"ahler-Einstein metric for the domain
 \begin{equation}\label{equ:Dp}
 D_{p}=\{(z,\zeta)\in \mathbb{B}^{n}\times\mathbb{C}:\|z\|^{2}+|\zeta|^{2p}<1, p>0\}.
 \end{equation}
 It can be seen as a unit disk bundle in the trivial line bundle over the ball $\mathbb{B}^{n}$.
 The Hermitian fiber metric $h$ is  $ (1-\|z\|^{2})^{-\frac{1}{p}}$ .
  His main technique is to exploit the
 noncompact automorphism group
 and the automorphism invariance of the K\"ahler-Einstein metric to reduce the Monge-Amp\`ere equation for the metric to an ordinary differential equation. He also showed that the Riemannian sectional curvature for the domain is bounded between negative constants if $p\geq1$.
After that,  Wang, Yin, Zhang, Roos \cite{Wang2006}  extended these results to a class of Hartogs domains over irreducible bounded symmetric domains.
  Recall Calabi's conditions (b) and (c), we know  that  the values  of the parameter $l$ and  the curvature $k$  is not the key by scaling.
 Without loss of generality, one can suppose that $l=-1$, $\lambda=-(n+1)$. Under this assumption,
  Ebenfelt, Xiao and Xu \cite{Ebenfelt2023} introduced a new
ansatz to study the existence of the  complete K\"ahler-Einstein metric  on the disk bundle over a complete K\"ahler manifold with constant Ricci eigenvalues. Their  sufficient condition is that all Ricci eigenvalues  must be strictly less than one.

Our first main work of this paper is to compute the holomorphic sectional curvature and Riemannian sectional curvature for the complete K\"ahler-Einstein metric on the disk bundle \eqref{equ:CAn}.
It is motivated by Bland's  curvature estimation for the complete K\"ahler-Einstein metric on the Thullen domain \eqref{equ:Dp}.
When the base space of the disk bundle \eqref{equ:CAn} is a complete K\"ahler-Einstein manifold, we study  the negatively (holomorphically) pinched properties of the  complete K\"ahler-Einstein metric on the disk bundle.
As a direct application of Wu and Yau's result in \cite{Wu2020}, we can obtain additional complex manifolds on which the complete K\"ahler-Einstein metric, the Kobayashi metric and the Bergman metric are equivalent.

Another motivation for this paper comes from  Cheng's conjecture in \cite{Cheng1979}:
 if the Bergman metric of a strictly
pseudoconvex domain in $\mathbb{C}^{n}$ is K\"ahler-Einstein, then the domain is biholomorphic to
the complex unit ball. Replace the strictly pseudoconvex domain by a pseudoconvex domain in $\mathbb{C}^{n}$. It is well known that
the Bergman metric and the complete K\"ahler-Einstein metric are coincide on any bounded homogeneous domain.
It was also asked by Yau (\cite{Yau1982}, pg. 679), whether this happens if and only if the bounded domain is homogeneous.
 Cheng's conjecture was confirmed by the combined work of
Fu-Wong\cite{Fu1997} and Nemirovski-Shafikov \cite{Nem2006} in dimension two. Finally, it was proved by   Huang and Xiao in higher dimensions \cite{Huang2020}.
Along this line, Huang and Li \cite{HuangL2020}, Ebenfelt, Xiao and Xu \cite{Ebenfelt2022}, Ganguly and Sinha \cite{Ganguly2022} studied other variations of Cheng's conjecture on Stein manifolds, and more generally on possibly singular Stein spaces with strongly pseudoconvex boundary.
For Reinhardt domains of finite type in $\mathbb{C}^{2}$, Fu and Wong \cite{Fu1997} proved the Bergman-Einstein condition  implies
that the domain must be strictly pseudoconvex. Hence, the domain must be a unit ball. Actually, a recent result given by Yuan  \cite{Yuan}
showed that Yau's conjecture is ture for all Reinhardt domains in $\mathbb{C}^{n}$ with smooth boundary.
See other variations of Cheng's
conjecture in Li \cite{Li2005,Li2009} and references therein.

For Thullen domain $D_{p}$,  it had been proved  by Cho \cite{Cho2021}   that
 the Bergman metric of $D_{p}\subset \mathbb{C}^{2}$
 is K\"ahler-Einstein if and only if $p=1$, i.e.,
 it is the unit ball in $\mathbb{C}^{2}$.
 Ebenfelt, Xiao and Xu's result in Proposition 1.10 of \cite{Ebenfelt2024} shows
it is also true for a more general case:
$
 D_{p,k}=\{(z,\zeta)\in \mathbb{B}^{n}\times\mathbb{C}^{k}:\|z\|^{2}+\|\zeta\|^{2p}<1, p>0\}.$
 In recently,
 Loi, Mossa and Zuddas \cite{Loi2025} proved Cheng's conjecture holds for Cartan-Hartogs domains
 which are a class of Hartogs domains over Cartan domains.
 The above result had been extended by  Mossa \cite{Mossa}  to a more general Hartogs domains  over bounded homogeneous domains.

 In general, let $\Omega$ be a domain in $\mathbb{C}^{n}$ and $\phi(z)$ be a continuous positive function on $\Omega$.
We define a Hartogs domain in $\mathbb{C}^{n+k}$ as
$\Omega_{\phi,k}=\{(z,\zeta)\in \mathbb{C}^{n+k}: z\in \Omega, \|\zeta\|^{2}< \phi(z)\}$.
It was proved by Cheng-Yau \cite{Cheng1980} and  Mok-Yau \cite{Mok1983} that a bounded domain $\Omega\subset \mathbb{C}^{n}$ is pseudoconvex if and only if it carries  a complete K\"ahler-Einstein metric with negative Ricci curvature.
Thus, for Cheng's
conjecture on  Hartogs domains, it suffices to restrict attention  to the  bounded pseudoconvex case.  It implies that $\Omega$ is  a bounded pseudoconvex domain and $-\log\phi$ is
 a strictly plurisubharmonic  function on $\Omega$. Take $h(z)=\phi^{-1}(z)$. We rewrite $\Omega_{\phi,k}$ as follows:
\begin{equation}\label{equ:B omega k}
B(\Omega, k)=\left\{(z,\zeta)\in \Omega\times \mathbb{C}^{k}:\|\zeta\|^{2}<h^{-1}(z)\right\}.
\end{equation}
It can be seen as a unit ball bundle in the  trivial vector bundle over  $\Omega$.
The function $h$  is the Hermitian  fiber metric and it induces the K\"ahler metric $g_{\Omega}$ by
$g_{i\bar j}=\frac{\partial^{2}}{\partial z_{i}\partial \overline{z}_{j}}\log h$.
It is worth to point out that $B(\Omega, k)$ is homogeneous  if and only if  it is biholomorphic to the unit ball $\mathbb{B}^{n+k}$ in $\mathbb{C}^{n+k}$ (see Remark \ref{rem:homo B}).
Hence,  Cheng and Yau's conjecture on Hartogs domains should be stated as follows:
 if the Bergman metric of a bounded pesudoconvex Hartogs domain is K\"ahler-Einstein, then the domain is biholomorphic to the ball?

Our second main work of this paper is to prove the   Cheng's conjecture  on $B(\Omega, k)$  $(k\geq2)$ over
  the simply-connected bounded pseudoconvex domain $\Omega$.
Our main idea is as follows:
Firstly, we show that the strictly pseudoconvex boundary  is spherical if the Bergman metric of $B(\Omega, k)$ is K\"ahler-Einstein.
This is an application of Huang and Li's result in \cite{HuangL2020}. Secondly, we prove
the Bochner-K\"ahler tensor of the K\"ahler metric induced by the boundary defining function of $B(\Omega, k)$ vanishes if and only if the base manifold is complex hyperbolic space. Here we need to combine  the work of Webster \cite{Webster1977}  on the  circle bundle with the relations between the curvature tensor, scalar curvature and Ricci tensor of the bundle metric and their counterparts for the base metric.
Finally, we prove the biholomorphic isometric  map  from the base space to  complex hyperbolic space induces  a biholomorphic mapping   from the $B(\Omega, k)$ to  $\mathbb{B}^{n+k}$.

The organization of this paper is as follows.
In Section \ref{2}, we introduce the definitions of several curvature tensors and the formulas describing their mutual relations.
In Section \ref{3}, we discuss the complete K\"ahler-Einstein metric on  the disk bundle and the uniform boundedness properties of several invariants.
In Section \ref{4}, we compute the holomorphic sectional curvature directly and calculate sectional curvatures using curvature identities. Using the uniform boundedness of invariants, we also study the negative upper and lower bounds for  these two curvatures.
In Section \ref{5}, we give a classification of a class of spherical-type sphere bundles.
As an application, we prove that Cheng's conjecture holds  for a class of bounded pseudoconvex Hartogs domains.

\section{Preliminary}\label{2}
Let $M$ be an $n$-dimensional complex manifold with local coordinates $z_{1},\cdots, z_{n}$.
Let
$g_{M}=\sum^{n}_{i,j=1} g_{i\bar j}dz_{i} \otimes d\bar{z}_{j}$
be a K\"ahler metric  on the holomorphic tangent bundle $T^{(1,0)}M$.
The components $R_{i\bar j k \bar l}$ of the curvature tensor $R_{M}$
associated with the metric connection are locally given by the formula
\begin{equation}\label{equ:CTR}
R_{i\bar j k \bar l}= -\frac{\partial^{2}g_{i\bar j}}{\partial z_{k}\partial \bar{z}_{l}}
+\sum_{p,q=1}^{n}g^{p\bar q}\frac{\partial g_{i\bar p}}{\partial z_{k}}\frac{\partial g_{q\bar j}}{\partial \bar z_{l}}.
\end{equation}
 Let $p$ be a point in $M$ and $Z=\sum^{n}_{i=1}Z_{i}\frac{\partial}{\partial z_{i}}\in T^{(1,0)}_{p}M$ be a non-zero holomorphic tangent vector at $p$.
 Then, holomorphic sectional curvature of $ds^{2}_{M}$ at $p$ in the direction $Z$ is
\begin{equation}
  \displaystyle{ H(g_{M},p,Z)=\frac{2R_{M}(Z,Z,Z,Z)}{(g_{M}(Z,Z))^{2}},}
 \end{equation}
 where $R_{M}(Z,Z,Z,Z)=\sum^{n}_{i,j,k,l=1}R_{i\bar j k \bar l}Z_{i} \overline{Z_{j}}Z_{k} \overline{ Z_{l}}$.
Denote by $G_{M}$ the matrix of  $g_{M}$, then
$$R_{M}=dz\left(-\bar\partial\partial G_{M}+\partial G_{M} (G_{M})^{-1}\overline{\partial G_{M}}^{t}\right)\overline{dz}^{t},$$
where
$\partial=\sum_{k=1}^{n}dz_{k}\frac{\partial}{\partial z_{k}}$, and $\bar{\partial}=\sum_{l=1}^{n}d\bar{z}_{l}\frac{\partial}{\partial \bar{z}_{l}}$ are differential operator on  $G_{M}$.

Let $g^{r}_{M}=\mathrm{Re} g_{M}$ be  the Riemannian metric on $M$.
Denote by $\nabla$ the  unique Levi-Civita connection.
Let $ u,  v, s, t\in TM$ be  non-zero tangent vectors fields.
The curvature operator is defined by $\mathcal{R}( u, v)s=\nabla_{ u}\nabla_{ v}s-\nabla_{ v}\nabla_{ u}s-\nabla_{[ v, u]}s$ and
the Riemannian curvature tensor is defined by
$R^{r}_{M}( u, v, s, t)=g^{r}_{M}(\mathcal{R}( s, t) u, v)$.
The sectional curvature of $ u,  v\in TM$ at $p\in M$ is
$$k(g^{r}_{M}, p,  u, v)=-\frac{R^{r}_{M}( u, v, u, v)}{\| u\wedge  v\|^{2}_{g^{r}_{M}}},$$
where
$\| u\wedge  v\|^{2}_{g^{r}_{M}}=\| u\|^{2}_{g^{r}_{M}}\| v\|^{2}_{g^{r}_{M}}-\langle  u, v \rangle_{g^{r}_{M}}^{2}$.
Define two complex  tangent vectors
$$Z=\frac{1}{2}( u-iJ u), W=\frac{1}{2}( v-iJ v) \in T_{(1,0)}M,$$
i.e., $ u=Z+\overline{Z}, J u=i(Z-\overline{Z}),  v=W+\overline{W}, J v=i(W-\overline{W})$.
It is well known that
\begin{equation}\label{1}
g^{r}_{M}( u, u)=g_{M}(Z,Z), \text{and}
-R^{r}_{M}( u,J u, u,J u)=2R_{M}(Z).
\end{equation}
This implies that the holomorphic sectional curvature of $g_{M}$ is
$$H(g_{M},p,Z)=k(g^{r}_{M}, p,  u, J u), \ \ \text{for} \ \ Z\in T_{(1,0)}M.$$

Now, we introduce \textbf{a tensor operator $\tau$} to illustrate the relationship between the holomorphic sectional curvature and the Riemannian sectional curvature.
 Define a operator
$\tau:T_{(1,0)}^{*}M\times T_{(0,1)}^{*}M\times T_{(1,0)}^{*}M\times T_{(0,1)}^{*}M  \longrightarrow T^{*}M\times T^{*}M$ as follows:
\begin{eqnarray}
 \tau(A)( u, v)&=&\frac{1}{32}[3A(Z+iW) +3A(Z-iW)-A(Z+W)
\nonumber\\
  &&  -A(Z-W) -4A(Z) -4A(W)],
 \end{eqnarray}
where $A$ is a $(2,2)$-tensor field and $A(Z)=A(Z,\overline{Z},Z, \overline{Z})$.
The following result can be obtained by the properties of the curvature (see also (4.3)-(4.7) in \cite{Jun Nie}  for more detains).
We prove it by a formula in \cite{Bishop1963} for  brevity.
\begin{lemma}\label{lem:tauR}
Let $(M, g_{M})$ be a K\"ahler manifold.
Let $R_{M}$ and $R^{r}_{M}$ be the curvature tensor in \eqref{equ:CTR} and   the Riemannian curvature tensor of $g_{M}$ respectively. Then
$\tau(R_{M})=-\frac{1}{2}R^{r}_{M}( u, v, u, v)$.
\end{lemma}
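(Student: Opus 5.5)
The plan is to compute both sides of Lemma \ref{lem:tauR} in terms of two basic mixed curvature quantities and compare them. The left side is handled by polarization of the quartic form $A(Z)=R_M(Z,\overline Z,Z,\overline Z)$. The right side is handled by extending $R^r_M$ complex-multilinearly and using the type decomposition of the curvature of a K\"ahler metric (the Bishop--Goldberg formula in \cite{Bishop1963}). Set
$$a=R_M(Z,\overline Z,W,\overline W),\qquad b=R_M(Z,\overline W,Z,\overline W).$$
By the K\"ahler symmetries $R_{i\bar j k\bar l}=R_{k\bar j i\bar l}=R_{i\bar l k\bar j}$ and $\overline{R_{i\bar jk\bar l}}=R_{j\bar i l\bar k}$, the quantity $a$ is real and equals $R_M(Z,\overline W,W,\overline Z)=R_M(W,\overline Z,Z,\overline W)$.

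\textbf{Step 1: evaluate $\tau(R_M)$.} Expand $R_M(Z+ cW)$ for $c\in\{1,-1,i,-i\}$ by multilinearity in the holomorphic slots and conjugate-multilinearity in the antiholomorphic slots. Terms containing an odd number of $W$'s cancel in the sums $R_M(Z+W)+R_M(Z-W)$ and $R_M(Z+iW)+R_M(Z-iW)$. Among the terms with two $Z$'s and two $W$'s there are four mixed placements, each equal to $a$, together with one term equal to $b$ and one equal to $\bar b$. Replacing $W$ by $iW$ multiplies $b$ by $\bar i^{\,2}=-1$ and leaves $a$ unchanged. This gives
$$R_M(Z+W)+R_M(Z-W)=2R_M(Z)+2R_M(W)+8a+4\,\mathrm{Re}\,b,$$
$$R_M(Z+iW)+R_M(Z-iW)=2R_M(Z)+2R_M(W)+8a-4\,\mathrm{Re}\,b.$$
Substituting into the definition of $\tau$, the $R_M(Z)$ and $R_M(W)$ terms cancel exactly against $-4A(Z)-4A(W)$. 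This leaves $\tau(R_M)(u,v)=\frac{1}{32}(16a-16\,\mathrm{Re}\,b)=\frac12(a-\mathrm{Re}\,b)$.

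\textbf{Step 2: express $R^r_M(u,v,u,v)$ in terms of $a$ and $b$.} Write $u=Z+\overline Z$ and $v=W+\overline W$, and extend $R^r_M$ complex-linearly to $TM\otimes\mathbb C$. Since $J$ is parallel, the curvature operator preserves types. Hence the complexified tensor vanishes unless exactly one of the first two slots and exactly one of the last two slots is of type $(1,0)$, with the components identified with $R_{i\bar jk\bar l}$ up to a universal constant and sign. Expanding $R^r_M(u,v,u,v)$ then produces only the terms $R_M(Z,\overline W,Z,\overline W)=b$, its conjugate, and the mixed terms of type $R_M(Z,\overline W,W,\overline Z)=a$, the latter entering with the opposite sign because of the antisymmetry in each pair. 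The outcome should be $R^r_M(u,v,u,v)=c\,(a-\mathrm{Re}\,b)$ for a universal constant $c$.

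\textbf{Step 3: fix the constant.} Specialize to $v=Ju$, so that $W=iZ$. Then $a=R_M(Z)$ and $b=\bar i^{\,2}R_M(Z)=-R_M(Z)$, so $a-\mathrm{Re}\,b=2R_M(Z)$. Comparing with \eqref{1}, which gives $R^r_M(u,Ju,u,Ju)=-2R_M(Z)$, forces $c=-1$. Therefore
$$R^r_M(u,v,u,v)=-(a-\mathrm{Re}\,b)=-2\,\tau(R_M)(u,v),$$
which is the claim. As a check, Step 1 gives $\tau(R_M)=R_M(Z)$ in this special case, consistent with the lemma.

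The main obstacle is Step 2. One has to track carefully the sign and normalization conventions linking the paper's $R_{i\bar jk\bar l}$ in \eqref{equ:CTR} to the Riemannian tensor $R^r_M(u,v,s,t)=g^r_M(\mathcal R(s,t)u,v)$ with $g^r_M=\mathrm{Re}\,g_M$. To avoid chasing these constants directly, I would only establish the structural statement that $R^r_M(u,v,u,v)$ is a real multiple of $a-\mathrm{Re}\,b$, which follows from type-preservation and the pair antisymmetries. The constant is then read off from the normalization \eqref{1}, as in Step 3.
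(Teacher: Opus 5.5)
Your proposal is correct, and it takes a different route from the paper. The paper uses Bishop's real polarization identity as a black box. That identity expresses $-R^r_M(u,v,u,v)$ through $Q(u\pm Jv)$, $Q(u\pm v)$, $Q(u)$ and $Q(v)$, where $Q(x)=-R^r_M(x,Jx,x,Jx)$. The paper then converts each of these into $2R_M$ of the matching $(1,0)$-vector via \eqref{1}, so the lemma becomes a relabeling.

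You instead compute both sides in terms of $a=R_M(Z,\overline Z,W,\overline W)$ and $b=R_M(Z,\overline W,Z,\overline W)$. Your expansions check out:
\begin{itemize}
\item $\tau(R_M)(u,v)=\frac12(a-\mathrm{Re}\,b)$.
\item If $R^r_M(X,\overline Y,S,\overline T)=\kappa R_M(X,\overline Y,S,\overline T)$ on $(1,0)$-vectors, then type-preservation and the pair antisymmetries give $R^r_M(u,v,u,v)=-2\kappa(a-\mathrm{Re}\,b)$.
\end{itemize}
You use \eqref{1} only once, to fix the constant. In effect you reprove Bishop's identity by complex polarization. Along the way you get the more explicit formula $-R^r_M(u,v,u,v)=R_M(Z,\overline Z,W,\overline W)-\mathrm{Re}\,R_M(Z,\overline W,Z,\overline W)$. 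The paper's argument is shorter only because it outsources exactly this computation.

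The one soft spot is the proportionality asserted in Step 2 (your ``should be''). You can close it without chasing sign and normalization conventions:
\begin{itemize}
\item Set $\widetilde R(X,\overline Y,S,\overline T):=R^r_M(X,\overline Y,S,\overline T)$. It has the same K\"ahler symmetries as $R_M$, by the first Bianchi identity and the vanishing of $\mathcal R(S,X)$ when $S,X$ have the same type.
\item Expanding by types gives $R^r_M(u,Ju,u,Ju)=-4\widetilde R(Z,\overline Z,Z,\overline Z)$.
\item So \eqref{1} says $\widetilde R$ and $\frac12R_M$ have the same quartic form, and polarization gives $\kappa=\frac12$, hence $c=-1$.
\end{itemize}
This fixes $c$ pointwise, so you no longer need a point where $R_M(Z)\neq0$.
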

\begin{proof}
Let $B( u, v):=-R^{r}_{M}( u, v, u, v)$, it is shown by Bishop\cite{Bishop1963} that
\begin{eqnarray}
   B( u, v)&=&\frac{1}{32}[3Q( u+J v) +3Q( u-J v)-Q( u+ v)
\nonumber\\
  &&  -Q( u- v) -4Q( u) -4Q( v)],
 \end{eqnarray}
where $Q( u):=-R^{r}_{M}( u,J u, u,J u)$.
Notice that
\begin{eqnarray*}
&&\frac{1}{2}( u+J v-iJ( u+J v))= Z+iW, \  \frac{1}{2}( u-J v-iJ( u-J v))= Z-iW,\\
&&\frac{1}{2}( u+ v-iJ( u+ v))= Z+W, \ \ \ \ \ \ \frac{1}{2}( u- v-iJ( u- v))= Z-W.
\end{eqnarray*}

By \eqref{1}, we get $Q( u+J v)=2R_{M}(Z+iW)$, $Q( u-J v)=2R_{M}(Z-iW)$, $Q( u+ v)=2R_{M}(Z+W)$,  $Q( u- v)=2R_{M}(Z-W)$,   $Q( u)=2R_{M}(Z)$,  $Q( v)=2R_{M}(W)$.
Hence,
 we  have the formula
\begin{eqnarray}\label{Relation2}
   -R^{r}_{M}( u, v, u, v)&=&\frac{1}{16}[3R_{M}(Z+iW) +3R_{M}(Z-iW)-R_{M}(Z+W)
\nonumber\\
  &&  -R_{M}(Z-W) -4R_{M}(Z) -4R_{M}(W)].
 \end{eqnarray}
The conclusion  follows from it.
 \end{proof}

\begin{lemma}\label{lem:taug}
 Let $(M, g_{M})$ be a K\"ahler manifold
and $g^{r}_{M}=\mathrm{Re} g_{M}$ be the Riemannian metric. Then
\begin{eqnarray}
\tau(g_{M}\otimes g_{M})( u, v)
&=&\frac{1}{4}\left(\| u\wedge  v\|^{2}_{g^{r}_{M}}+3\langle  u,J v \rangle_{g^{r}_{M}}^{2}\right),
\end{eqnarray}
 where $\| u\wedge  v\|^{2}_{g^{r}_{M}}=\| u\|^{2}_{g^{r}_{M}}\| v\|^{2}_{g^{r}_{M}}-\langle  u, v \rangle_{g^{r}_{M}}^{2}$.
Moreover, we have
$$\frac{1}{4}\| u\wedge  v\|^{2}_{g^{r}_{M}}\leq  \tau(g_{M}\otimes g_{M})( u, v)\leq \| u\wedge  v\|^{2}_{g^{r}_{M}}.$$
\end{lemma}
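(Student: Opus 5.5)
The plan is to reduce everything to real norms via the correspondence $u\mapsto Z=\frac12(u-iJu)$ already used in the proof of Lemma \ref{lem:tauR}. For $A=g_M\otimes g_M$ we have $A(Z)=g_M(Z,Z)^2$, and by \eqref{1}, $g_M(Z,Z)=\|u\|^2_{g^r_M}$.

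First, the map $u\mapsto \frac12(u-iJu)$ is real-linear. The identities listed in the proof of Lemma \ref{lem:tauR} show that $Z+iW$, $Z-iW$, $Z+W$, $Z-W$ correspond to the real vectors $u+Jv$, $u-Jv$, $u+v$, $u-v$. Therefore
$$A(Z\pm iW)=\|u\pm Jv\|^4,\qquad A(Z\pm W)=\|u\pm v\|^4,$$
with all norms and inner products taken in $g^r_M$.

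Next, set $a=\|u\|^2$, $b=\|v\|^2=\|Jv\|^2$ (since $J$ is $g^r_M$-orthogonal), $x=\langle u,v\rangle$ and $y=\langle u,Jv\rangle$. Then
$$\|u\pm Jv\|^2=a+b\pm 2y,\qquad \|u\pm v\|^2=a+b\pm2x.$$
Using $(s+t)^2+(s-t)^2=2s^2+2t^2$, the bracket in the definition of $\tau$ becomes
$$3\bigl(2(a+b)^2+8y^2\bigr)-\bigl(2(a+b)^2+8x^2\bigr)-4a^2-4b^2=8ab-8x^2+24y^2.$$
Dividing by $32$ gives
$$\tau(g_M\otimes g_M)(u,v)=\tfrac14\bigl(ab-x^2+3y^2\bigr),$$
which is the stated formula, since $ab-x^2=\|u\wedge v\|^2$.

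For the two-sided bound, the lower bound is immediate from $3y^2\ge0$. For the upper bound it suffices to show $y^2\le ab-x^2$, i.e. $\langle u,v\rangle^2+\langle u,Jv\rangle^2\le\|u\|^2\|v\|^2$. This holds by Bessel's inequality: $v$ and $Jv$ are orthogonal, because $\langle v,Jv\rangle=-\langle Jv,v\rangle$ forces it to vanish, and they have equal length $\sqrt b$. So if $v\neq 0$, the vectors $v/\sqrt b$ and $Jv/\sqrt b$ are orthonormal and Bessel applies; if $v=0$ everything vanishes. Then $\frac14(\|u\wedge v\|^2+3y^2)\le\|u\wedge v\|^2$.

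The only delicate point is the bookkeeping of the correspondence $Z+iW\leftrightarrow u+Jv$. This is already verified in the proof of Lemma \ref{lem:tauR}, so no real obstacle remains.
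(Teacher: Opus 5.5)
Your proposal is correct and is precisely the direct computation the paper has in mind; the paper's proof only says the identity follows from ``a complicated but straightforward calculation''. You carry it out by passing to real norms via $Z\pm iW\leftrightarrow u\pm Jv$ and $Z\pm W\leftrightarrow u\pm v$, obtaining the bracket $8ab-8x^2+24y^2$, and your Bessel-inequality step $\langle u,v\rangle^2+\langle u,Jv\rangle^2\le\|u\|^2\|v\|^2$ supplies the upper bound that the paper leaves unstated.
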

\begin{proof}
It can be obtained by a complicated but straightforward calculation.
\end{proof}

\section{The complete K\"ahler-Einstein metric on the disk bundle}\label{3}
Let  $\pi:(L, h)\rightarrow M$ be a negative Hermitian line bundle over an $n$-dimensional complex manifold $M$.
Assume  that the curvature of the dual bundle $(L^*, h^{-1})$ induces a K\"ahler metric $g_{M}$ on $M$, whose the K\"ahler form is given by $\omega_{M}=\frac{\sqrt{-1}}{2}\partial \bar\partial\log h$.
 The disk bundle is defined by
 \begin{equation}\label{D1}
 D(L) := \{\zeta  \in L : |\zeta|_{h}^{2} < 1\},
 \end{equation}
where $|\zeta|_{h}$ is the norm of $\zeta$ with respect to the metric $h$.
Recall that for the K\"ahler metric $g_{M}=g_{i\bar j}dz_{i}\otimes d\bar{z}_{j}$, the components of the Ricci tensor $\mathrm{Ric}_{M}$
are given by  $$R_{i\bar j}=-\frac{\partial^{2}}{\partial z_{i}\partial \overline{z}_{j}}\log\det (g_{k\bar l}).$$
A K\"ahler manifold is called Einstein if $R_{i\bar j}=\lambda g_{i\bar j}$.
Let $P(y)$ be the characteristic polynomial of the linear operator:
$$\frac{2}{n+2}\mathrm{Ric}_{M}\cdot g_{M}^{-1}:T_{p}^{(1,0)}M\rightarrow T_{p}^{(1,0)}M \ \text{at} \ p\in M.$$
If $(M, g_{M})$ has constant Ricci eigenvalues, then
$$P(y)=\det\left(yI_{n}-\frac{2}{n+2}\mathrm{Ric}\cdot g_{M}^{-1}\right)=\prod_{i=1}^{n}(y-\frac{2\lambda_{i}}{n+2}),$$ where  $\lambda_{1}\leq\lambda_{2}\leq\cdots<\lambda_{n}<1$ are the Ricci eigenvalues of $(M, g_{M})$.
The following result given by  Ebenfelt, Xiao and Xu provides an explicit formula of the complete K\"ahler-Einstein metric.
\begin{lemma}\label{lemma:1}\cite{Ebenfelt2023}
Assume that the $n$-dimensional complete K\"ahler manifold $(M, g_{M})$ has constant Ricci eigenvalues
and every Ricci eigenvalue is strictly less than one, then the disk
bundle $D(L)$ admits a unique complete K\"ahler-Einstein metric $\widetilde{g}_{D}$ with Ricci curvature
$-(m + 1)$, where $m=n+1$.
Moreover, this metric is induced by the following K\"ahler form:
$$\widetilde{\omega}_{D}=-\frac{1}{m+1}\pi^* Ric_{M}+\frac{1}{m+1}\pi^*\omega_{M}-\frac{\sqrt{-1}}{2}\partial \overline{\partial}\log \phi(|\zeta|_{h}),$$
where $\pi : L \rightarrow M$ is the canonical fiber
projection of the line bundle, $\omega_{M}$ and $\mathrm{Ric}_{M}$ are  the K\"ahler form and the Ricci form of $(M, g)$ respectively,  $\phi : (-1, 1) \rightarrow R^{+}$ is an even real analytic function that depends only on the
characteristic polynomial of the Ricci endomorphism.
In the local coordinates,  the function
$$\widetilde{\psi}(z,\zeta)=\frac{1}{m+1} \log \det (g_{i\overline{j}}(z))+\frac{1}{m+1}\log h(z)-\log\phi(|\zeta|h^{\frac{1}{2}}(z))$$
is  a  K\"ahler potential function.
\end{lemma}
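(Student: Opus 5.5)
The statement is quoted from Ebenfelt--Xiao--Xu \cite{Ebenfelt2023}, so the plan is to reconstruct their argument through a rotationally invariant ansatz and an ODE reduction.

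\textbf{Local potential and reduction to one variable.} In a local trivialization write $x=|\zeta|^2h(z)$. Look for a potential
$$\widetilde{\psi}=\frac{1}{m+1}\log\det(g_{i\bar j})+\frac{1}{m+1}\log h-\log\phi(\sqrt{x}).$$
The first two terms are exactly what the Einstein equation forces at the level of the base. Indeed, $-\frac{\sqrt{-1}}{2}\partial\bar\partial\log\det g=\mathrm{Ric}_M$, and $\partial\bar\partial\log h$ gives $\omega_M$. So the $(1,1)$-form is
$$\widetilde{\omega}_D=-\frac{1}{m+1}\pi^*\mathrm{Ric}_M+\frac{1}{m+1}\pi^*\omega_M-\frac{\sqrt{-1}}{2}\partial\bar\partial\log\phi.$$
Put $u(x)=-\log\phi(\sqrt x)$. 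Since $\partial\bar\partial\log x=\pi^*\partial\bar\partial\log h$, one has
$$\partial\bar\partial u=xu'\,\pi^*\partial\bar\partial\log h+x\big(u'+xu''\big)\,\partial\log x\wedge\bar\partial\log x .$$
In an adapted frame (base directions plus $\partial\log x$), the metric matrix is then block triangular. Its determinant is
$$\det(\widetilde g)=\det\Big(\tfrac{1}{m+1}\big((1+(m+1)xu')\,g-\mathrm{Ric}\big)\,g^{-1}\Big)\cdot\det g\cdot(u'+xu'')\,h.$$

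\textbf{Role of constant Ricci eigenvalues.} The first factor is a polynomial in $xu'$. Its coefficients are the symmetric functions of the Ricci eigenvalues, i.e.\ it is built from the characteristic polynomial $P$. These are constant by hypothesis.

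\textbf{Einstein equation.} Impose $-\partial\bar\partial\log\det\widetilde g=-(m+1)\cdot\partial\bar\partial\widetilde\psi$. The $\log\det g$ and $\log h$ terms cancel against the chosen base part of $\widetilde\psi$. What remains is $\partial\bar\partial$ of a function of $x$ alone. Choosing the constant of integration to be zero gives a second order ODE, which can be written as
$$\Big(F(xu')\Big)'=c\,e^{(m+1)u}\quad\text{with}\quad F(y)=\int\prod_i\big(1+(m+1)y-\lambda_i\big)\,dy$$
(up to normalization). After the substitution $y=xu'$ it reduces to a first order ODE in $y$ versus $\log x$. This is explicitly integrable, since the right side is a polynomial determined by $P$.

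\textbf{Boundary conditions.} Impose $\phi$ even and real analytic with $\phi(0)>0$, which gives smoothness across the zero section. Impose $\phi\to0$ with the appropriate rate as $x\to1$, which gives blow-up at the boundary. Together these fix the solution. This step depends only on $P$, as claimed.

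\textbf{Positivity.} Positivity of $\widetilde\omega_D$ requires
$$1+(m+1)xu'-\lambda_i>0\quad\text{and}\quad u'+xu''>0.$$
Since $u'\ge0$, the first condition holds for all $x$ exactly because $\lambda_i<1$ — \emph{this is where the hypothesis on the eigenvalues enters}.

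\textbf{Completeness.} Completeness in the base directions follows from the completeness of $g_M$, since
$$\widetilde g\ge\frac{1-\lambda_n}{m+1}\,g_M .$$
In the fiber direction, the asymptotics $xu'\sim\frac{c}{1-x}$ near $x=1$ give infinite length. Altogether $\widetilde\omega_D$ is a complete Kähler–Einstein form on $D(L)$ with Ricci curvature $-(m+1)$, with $\phi$ depending only on $P$.

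\textbf{Uniqueness.} This follows from the Yau–Schwarz lemma (Cheng–Yau): two complete Kähler–Einstein metrics with the same negative Einstein constant must coincide, since each dominates the other.

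\textbf{Main obstacle.} I expect the hardest step to be analyzing the ODE near $x=1$. One must show that the solution with the given initial data exists on all of $[0,1)$, blows up exactly at $x=1$, and does so at a rate giving completeness. I would handle this by integrating the first order equation for $y(x)$ and comparing it with the ball case ($\lambda_i$ constant, giving the Bergman metric) by a barrier argument.
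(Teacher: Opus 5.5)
Your outline is correct and is essentially the Ebenfelt--Xiao--Xu argument that the paper quotes without proof. With $r=\sqrt{x}$ and the paper's variable $y=\frac{2}{m+1}+2xu'$, your equation $(F(xu'))'=c\,e^{(m+1)u}$ is, up to normalization, the relation $y'P(y)=2^{m+1}r\phi^{-(m+1)}$ recorded after the lemma. Your first-order equation in $\log x$ is its integrated form $ry'P(y)=Q(y)$, with the integration constant fixed by $Q(\frac{2}{m+1})=0$, i.e.\ by regularity at the zero section.

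The step you flag as the main obstacle needs no barrier argument. The equation is separable, and since every $\lambda_i<1$, both $P$ and $Q$ are positive for $y>\frac{2}{m+1}$, with $\deg Q=\deg P+2$. Hence $y$ reaches $+\infty$ at a finite value of $r$ (normalized to $r=1$), at the rate $(1-r)^{-1}$. Ebenfelt--Xiao--Xu encode this as the regular boundary condition $Z(1)=0$ for $Z=1/y$.
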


Let $m=n+1$ and consider the equation
$$rZ'\widehat{P}(Z)+\widehat{Q}(Z)=0, Z(1)=0,$$
where $\widehat{P}(x)=x^{m-1}P(x^{-1}), \ \widehat{Q}(x)=x^{m+1}Q(x^{-1})$
 for $P(y)=\prod_{i=1}^{n}(y-\frac{2\lambda_{i}}{m+1})$ and an $m+1$ degree monic polynomial $Q(y)$ satisfying $\frac{dQ}{dy}=(m+1)yP(y)$, $Q(\frac{2}{m+1})=0.$
 By Proposition 2.7 of \cite{Ebenfelt2023}, there exists a unique real analytic function $Z$ on $[-1, 1]$.
 Moreover, $Z$ is an even function satisfying $Z(0)=\frac{m+1}{2}$ and $Z'(0)=0, Z'(r)<0$ on $(0,1]$ and $Z'(1)=-1, Z''(0)<0$.
 $Z(r)\in(0,\frac{m+1}{2})$ on $(-1,0)\cup(0,1)$.
The function   $\phi$  is defined by
\begin{equation}\label{equ:phi}
\phi(r)=2\left(\frac{r}{-Z'\widehat{P}(Z)}\right)^{\frac{1}{m+1}}Z.
\end{equation}
It had been proved that
\begin{enumerate}
  \item[(a)] $\phi$ is real analytic on $[-1,1]$;
  \item[(b)] $\phi$ is an  even function satisfying $\phi>0$ on $(-1,1)$, $\phi(1)=0$;
  \item[(c)] $\phi$ satisfies that
$
(m+1)rZ\phi'+(m+1-2Z)\phi=0
$
with $\phi'(1)=-2$.
\end{enumerate}

Define $y(r)=\frac{1}{Z(r)}$ for $r\in [-1 , 1]$. Item (c) implies that $y=\frac{2}{m+1}-r\frac{\phi'}{\phi}$.
Notice that $y^{m-1}\widehat{P}(y^{-1})=P(y)$ and $Z'=\frac{y'}{y^{2}}$.
Equation  \eqref{equ:phi} turns to be
\begin{equation}\label{equ:122}
y'P(y)=2^{m+1}r\phi^{-(m+1)}(r), \ \text{on} \ [-1,1].
\end{equation}

Take the logarithm of the above equation and then take its derivative, we have
\begin{eqnarray*}
\left(\log [y'P(y)]\right)'&=&\frac{1}{r}-(m+1)\frac{\phi'}{\phi}\\
&=&\frac{1}{r}+(m+1)\frac{1}{r}(y-\frac{2}{m+1})\\
&=&\frac{1}{r}((m+1)y-1).
\end{eqnarray*}
It is equivalent to
\begin{equation}\label{equ:a}
\left(r[y'P(y)]\right)'=(m+1)yy'P(y).
\end{equation}
\begin{corollary}
Let $(L, h)$ be a negative line bundle over $M$ such that the dual bundle $(L^{*}, h^{-1})$ induces a complete K\"ahler metric $g_{M}$.
If $(M, g_{M})$ is a complete K\"ahler-Einstein manifold whose Ricci curvature satisfying $\lambda_{0}<1$,
then the disk bundle $D(L)$ admits a unique complete K\"ahler-Einstein metric  $\widetilde{g}_{D}$ with Ricci curvature $-(m+1)$.
The following function
\begin{equation}\label{g fct}
\widetilde{\psi}=\frac{1}{m+1}\log Y^{m-1}Y'+
\frac{1-\lambda_{0}}{m+1}\log h
\end{equation}
is a K\"ahler potential function of  its unique complete K\"ahler-Einstein metric,
 where $Y$ satisfies the original  differential equation:
\begin{equation}\label{ode}
\left\{
  \begin{array}{ll}
    XY^{m-1}Y'=Y^{m+1}+\frac{\lambda_{0}}{m}Y^{m}+C  \hbox{;} \\
   Y(0)=\frac{1-\lambda_{0}}{m+1}  \hbox{,}
  \end{array}
\right.
\end{equation}
and  $C=\frac{-(\lambda_{0}+m)(1-\lambda_{0})^{m}}{m(m+1)^{m+1}}$. Here,  $X=|\zeta|^{2}h(z)$, and
for any $(z,\zeta)\in D$, $X\in[0,1]$, $Y(X)\geq \frac{1-\lambda_{0}}{m+1},$  $ Y'(X)>0$ on $(0,1)$, and $Y'(0)=0$.
Moreover,  for $1\leq i, j\leq m$, the metric $\widetilde{g}_{D}$ can be expressed by
\begin{eqnarray*}\label{gij}
\widetilde{g}_{i\bar j}&=&\frac{Y'}{X}X_{i}X_{\bar j}+Y(\log X)_{i\bar j}.
\end{eqnarray*}

\end{corollary}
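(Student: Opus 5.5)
Since $g_M$ is Kähler-Einstein with constant $\lambda_0<1$, all Ricci eigenvalues equal $\lambda_0$ and are constant and strictly less than one. Lemma \ref{lemma:1} therefore applies, which gives existence, uniqueness and the potential
$$\widetilde{\psi}=\frac{1}{m+1}\log\det(g_{i\bar j})+\frac{1}{m+1}\log h-\log\phi(|\zeta|h^{1/2}).$$
The Einstein condition means $\det(g_{i\bar j})h^{\lambda_0}$ is the squared modulus of a holomorphic function locally, because $\partial\bar\partial\log\det g=-\lambda_0\partial\bar\partial\log h$. So, modulo pluriharmonic terms, the first two summands reduce to $\frac{1-\lambda_0}{m+1}\log h$. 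It remains to rewrite $-\log\phi(r)$, with $r^2=X=|\zeta|^2h$, in the form $\frac{1}{m+1}\log(Y^{m-1}Y')+\frac{1}{m+1}\log X$. The extra $\log X=\log|\zeta|^2+\log h$ contributes a pluriharmonic piece and one more $\frac{1}{m+1}\log h$, and I will absorb this into the normalization by comparing coefficients carefully.

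Concretely, I define $Y(X)$ through $y(r)=1/Z(r)$. Here $P(y)=(y-\frac{2\lambda_0}{m+1})^n$, and $Q$ is obtained by integrating $(m+1)yP(y)$ with $Q(\frac{2}{m+1})=0$. Then \eqref{equ:122} together with \eqref{equ:a} becomes a first-order ODE in $y$. I set $Y=c\,(y-\frac{2\lambda_0}{m+1})$-type affine rescaling, or more likely $Y$ proportional to $y$ shifted, and change the variable to $X=r^2$. Integrating \eqref{equ:a} once gives $r\,y'P(y)=Q(y)+\text{const}$, which in $X$ reads $2XP(y)\frac{dy}{dX}=Q(y)+c$. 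With $n=m-1$, choosing $Y$ so that $P$ becomes $Y^{m-1}$ and $Q(y)=Y^{m+1}+\frac{\lambda_0}{m}Y^m+\text{const}$ produces exactly \eqref{ode}. The shift making $P$ a pure power is $Y\propto y-\frac{2\lambda_0}{m+1}$, and one checks that $Q$ expands as claimed. The constant $C$ is fixed by the boundary condition at $X=0$, namely $y'(0)=0$ from $Z'(0)=0$. This forces $Y'(0)=0$, so the right-hand side vanishes at $Y(0)$, and the value $Y(0)=\frac{1-\lambda_0}{m+1}$ comes from $Z(0)=\frac{m+1}{2}$ after the rescaling. Substituting $Y(0)$ into $Y^{m+1}+\frac{\lambda_0}{m}Y^m+C=0$ gives the stated $C$.

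Next, \eqref{equ:122} expresses $\phi^{-(m+1)}$ as a multiple of $y'P(y)/r$, which is a multiple of $Y^{m-1}Y'$ in the $X$ variable. Hence $-\log\phi=\frac{1}{m+1}\log(Y^{m-1}Y')+\text{const}$, and this yields \eqref{g fct}. The monotonicity claims follow from the listed properties of $Z$. Since $Z'<0$ on $(0,1)$, $y$ increases in $r$, and therefore $Y'>0$ and $Y\ge Y(0)$.

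For the metric formula, differentiate the potential. Because $Y$ is a function of $X$ only, the identity to verify is $\partial\bar\partial\widetilde\psi=Y'(\log X)_i(\log X)_{\bar j}X+Y(\log X)_{i\bar j}$, which says $\widetilde\psi_{X}=Y/X$ up to the $\log h$ term. Differentiating $\frac{1}{m+1}\log(Y^{m-1}Y')$ and using \eqref{ode} differentiated once should give $\frac{d}{dX}\bigl(\frac{1}{m+1}\log(Y^{m-1}Y')\bigr)=\frac{Y}{X}-\frac{1-\lambda_0}{(m+1)X}$. The $-\frac{1-\lambda_0}{m+1}(\log X)_{i\bar j}$ part then cancels against $\frac{1-\lambda_0}{m+1}(\log h)_{i\bar j}$, since $(\log X)_{i\bar j}=(\log h)_{i\bar j}$. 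The main obstacle is this bookkeeping of constants in the affine rescaling between $y$ and $Y$. Everything else is forced once that normalization is pinned down by matching $Y(0)$ and the leading coefficient of $Q$.
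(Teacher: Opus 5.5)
The overall route is the paper's. You apply Lemma~\ref{lemma:1} with $P(y)=(y-\frac{2\lambda_0}{m+1})^{m-1}$ and use the Einstein condition to replace $\frac{1}{m+1}\log\det(g_{i\bar j})+\frac{1}{m+1}\log h$ by $\frac{1-\lambda_0}{m+1}\log h$ modulo pluriharmonic terms. You substitute $Y(X)=\frac12\bigl(y(X^{1/2})-\frac{2\lambda_0}{m+1}\bigr)$; your leading-coefficient matching gives exactly the factor $\frac12$. Then \eqref{equ:122} gives the potential, one integration of \eqref{equ:a} gives \eqref{ode}, and the identity $(Y^{m-1}Y')'/(Y^{m-1}Y')=\bigl((m+1)Y-(1-\lambda_0)\bigr)/X$ gives the metric.

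A small correction to your first paragraph: no extra $\frac{1}{m+1}\log X$ appears. Since $y'(r)=4rY'(r^2)$, one gets $y'P(y)/r=2^{m+1}Y^{m-1}Y'$, as you find later. It could not have been ``absorbed'' anyway, because a leftover $\frac{1}{m+1}\log h$ is not pluriharmonic.

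The genuine problem is your step ``$y'(0)=0$ \dots\ forces $Y'(0)=0$''. Since $Y'(X)=y'(X^{1/2})/(4X^{1/2})$, one gets $Y'(0)=\frac14 y''(0)=-Z''(0)/(4Z(0)^2)$, which is positive because $Z''(0)<0$. So the assertion $Y'(0)=0$ in the statement is false, and your step cannot be repaired. Three independent checks confirm this:
\begin{itemize}
\item On the zero section $\widetilde g_{m\bar m}=Y'(0)\,h$, so $Y'(0)=0$ would make $\widetilde g_D$ degenerate.
\item Your own identity $-\log\phi=\frac{1}{m+1}\log(Y^{m-1}Y')+\mathrm{const}$, with $Y(0)>0$, would give $\phi(0)=+\infty$, contradicting the analyticity of $\phi$ on $[-1,1]$.
\item In the model case $M=\mathbb{B}^n$, $h=(1-\|z\|^2)^{-1}$, $\lambda_0=-m$, one has $C=0$ and $Y=(1-X)^{-1}$, so $Y'(0)=1$.
\end{itemize}

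The paper's proof makes the identical slip: it drops the factor $X^{-1/2}$ when letting $X\to 0$. Its later curvature computations (Lemma~\ref{cur 0} and the zero-section evaluation in the pinching theorem) in fact divide by $Y'$ there.

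What you should prove instead is $Y'(0)>0$, i.e.\ $Y'>0$ on $[0,1)$. Nothing else in your argument depends on the false claim. Determining $C$ only uses that $XY^{m-1}Y'\to 0$ as $X\to 0$. That follows from the boundedness of $Y'$ near $0$, or equivalently from $r\,y'P(y)\to 0$ as $r\to 0$.
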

\begin{proof}

(i)
For local
coordinates $(U, z)$ of $M$ and a natural local free frame $\{e_{U}\}$ on $U$,  there exists a system of linear holomorphic coordinates $\zeta$ on each fiber of $\pi^{-1}(U)$,
 so  $(z, \zeta)$ is a holomorphic coordinate system of $\pi^{-1}(U)$.
One can locally represent the Hermitian structure on $L$ by a positive function $h(z)$  on $U$ such that its Hermitian form on $L$  can be expressed as
$|\zeta|_{h}^{2}=h(z)\zeta\bar{\zeta}$. Locally, $D(L)$ can be written as follows:
$$D(L)\cap \pi^{-1}(U)=\left\{(z,\zeta)\in U\times \mathbb{C}:|\zeta|^{2}h(z)<1\right\}.$$
Suppose that  $(M, g_{M})$ is K\"ahler-Einstein, i.e., $\lambda_{i}=\lambda_{0}$ for $i=1,2, \cdots, n$. Thus we have
 $$P(y)=(y-\frac{2\lambda_{0}}{m+1})^{m-1}.$$
Define $X=|\zeta|^{2}_{h}\in [0, 1)$ and   $Y(X)=\frac{1}{2}\left(y(X^{\frac{1}{2}})-\frac{2\lambda_{0}}{m+1}\right)$.
By Lemma \ref{lemma:1}, in the local coordinates,
\begin{equation}\label{equ:psi1}
\widetilde{\psi}
=\frac{1-\lambda_{0}}{m+1}\log h-\log \phi(|\zeta|_{h})
\end{equation}
is one of the K\"ahler potential functions of $\widetilde{g}_{D}$.
Take the logarithm of the  equation \eqref{equ:122},  we have
 $$-\log \phi(r)=\frac{1}{m+1}\left(\log [y'P(y)]-\log r\right)-\log 2 .$$
Take $r=X^{\frac{1}{2}}$, by the definition \eqref{equ:phi} of $\phi$, we get
\begin{eqnarray*}
-\log \phi(X^{\frac{1}{2}})&=&\frac{1}{m+1}\left(\log [2^{m+1}X^{\frac{1}{2}}Y'Y^{n}]-\log X^{\frac{1}{2}}\right)-\log 2\\
&=&\frac{1}{m+1}\log [Y'Y^{n}].
\end{eqnarray*}
Insert it into \eqref{equ:psi1}, we obtain that
$$\widetilde{\psi}
=\frac{1}{m+1}\log [Y'Y^{n}]+\frac{1-\lambda_{0}}{m+1}\log h(z).$$

(ii)
Take $r=X^{\frac{1}{2}}$ in \eqref{equ:a}. Since
$y'(X^{\frac{1}{2}})=4X^{\frac{1}{2}}Y'(X)$ and $P(y(X^{\frac{1}{2}}))=2^{m-1}Y^{m-1}$, we can get
\begin{eqnarray*}
&&2^{m+1}\left([XY'(X)Y^{m-1}]\right)'2X^{\frac{1}{2}}\\
&=&(m+1)(2Y+\frac{2\lambda_{0}}{m+1})[2^{m+1}X^{\frac{1}{2}}Y'(X)Y^{m-1}]\\
&=&2(m+1)[2^{m+1}X^{\frac{1}{2}}Y'(X)Y^{m}]+2\lambda_{0}[2^{m+1}X^{\frac{1}{2}}Y'(X)Y^{m-1}].
\end{eqnarray*}
This implies that
$[XY'(X)Y^{m-1}]'=[Y^{m+1}]'+\frac{\lambda_{0}}{m}[Y^{m}]'.$
It is equivalent  to
$$XY'(X)Y^{m-1}=Y^{m+1}+\frac{\lambda_{0}}{m}Y^{m}+C. $$
Let $X=0$, we know $Y(0)=\frac{1}{2}y(0)-\frac{\lambda_{0}}{m+1}=\frac{1-\lambda_{0}}{m+1}>0 .$
Thus, the constant
$$C=-Y^{m}(0)(Y(0)+\frac{\lambda_{0}}{m})=-\frac{(\lambda_{0}+m)(1-\lambda_{0})^{m}}{m(m+1)^{m+1}}.$$
Since $y'(r)=-\frac{Z'(r)}{Z^{2}(r)}$,   we know
$Y'(X)=\frac{1}{4}y'(X^{\frac{1}{2}})X^{-\frac{1}{2}}=-\frac{1}{4}\frac{X^{-\frac{1}{2}}Z'(X^{\frac{1}{2}})}{Z^{2}(X^{\frac{1}{2}})}$.
Hence,  $Y'(0)=0$ and  $Y'(X)>0$ on $(0, 1)$.

(iii) By a simple transformation,  \eqref{ode} turns to be
$$
\frac{[Y^{m-1}Y']'}{Y^{m-1}Y'}=(m+1)\frac{Y}{X}-\frac{1-\lambda_{0}}{X}.
$$
Insert it into the following computation of the metric, for $1\leq i, j\leq m$, we get
\begin{eqnarray*}
\widetilde{g}_{i\bar j}&=&\frac{1}{m+1}\left(\frac{[Y^{m-1}Y']'}{[Y^{m-1}Y']}X_{i}+(1-\lambda_{0})(\log X)_{i}\right)_{\bar j}\\
&=&\frac{1}{m+1}\left(((m+1)\frac{Y}{X}-\frac{1-\lambda_{0}}{X})X_{i}+(1-\lambda_{0})(\log X)_{i}\right)_{\bar j}\\
&=&(\frac{YX_{i}}{X})_{\bar j}
=\frac{Y'}{X}X_{i}X_{\bar j}+Y(\log X)_{i\bar j}.
\end{eqnarray*}
\end{proof}

Notice that $\lambda_{0}\leq -m$ iff $C\geq0$, and  $\frac{C}{Y^{m+1}(0)}=-\frac{\lambda_{0}+m}{m(1-\lambda_{0})}$.
Hence,   for $X\in [0,1)$, we have
\begin{equation}\label{inequ1}
\frac{C}{Y^{m+1}(X)}\in
\left\{
\begin{array}{ll}
              \left(\left.0,-\frac{\lambda_{0}+m}{m(1-\lambda_{0})}\right]\right., & \hbox{ if $\lambda_{0}\leq -m$;} \\
\\
              \left[\left.-\frac{\lambda_{0}+m}{m(1-\lambda_{0})}, 0\right)\right., & \hbox{ if $\lambda_{0}>- m$.}
            \end{array}
          \right.
\end{equation}
This implies the following results:
\begin{equation}\label{inequ0}
1+\frac{C}{Y^{m+1}(X)}\in
\left\{
\begin{array}{ll}
              \left(\left.1, \frac{-(m+1)\lambda_{0}}{m(1-\lambda_{0})}\right]\right., & \hbox{ if $\lambda_{0}\leq -m$;} \\
\\
              \left[\left.\frac{-(m+1)\lambda_{0}}{m(1-\lambda_{0})}, 1\right)\right., & \hbox{ if $\lambda_{0}>- m$.}
            \end{array}
          \right.
\end{equation}
\begin{equation}\label{inequ2}
1-\frac{mC}{Y^{m+1}(X)}\in
\left\{
\begin{array}{ll}
              \left[\left.\frac{m+1}{1-\lambda_{0}}, 1\right)\right., & \hbox{ if $\lambda_{0}\leq -m$;} \\
\\
              \left(\left.1, \frac{m+1}{1-\lambda_{0}}\right]\right., & \hbox{ if $\lambda_{0}>- m$.}
            \end{array}
          \right.
\end{equation}
\begin{equation}\label{inequ3}
2+\frac{m(m-1)C}{Y^{m+1}(X)}\in
\left\{
\begin{array}{ll}
              \left(\left. 2, -\frac{(m+1)(\lambda_{0}+m-2)}{1-\lambda_{0}}\right]\right., & \hbox{ if $\lambda_{0}\leq -m$;} \\
\\
             \left[\left.-\frac{(m+1)(\lambda_{0}+m-2)}{1-\lambda_{0}}, 2\right)\right., & \hbox{ if $\lambda_{0}>- m$.}
            \end{array}
          \right.
\end{equation}
 \begin{lemma}\label{lem:totals}
 Let $(L, h)$ be a negative line bundle over $M$ such that the dual bundle $(L^{*}, h^{-1})$ induces a complete K\"ahler-Einstein  metric $g_{M}$
 whose Ricci curvature satisfying $\lambda_{0}<1$.
Let $\widetilde{g}_{D}$  be the unique complete K\"ahler-Einstein metric   with Ricci curvature $-(m+1)$ on the disk bundle $D(L)$.
 Then
$(M, \alpha g_{ M})$ is a totally geodesic submanifold of $(D(L), \widetilde{g}_{D})$,
where  $\alpha=\frac{1-\lambda_{0}}{m+1}$.
\end{lemma}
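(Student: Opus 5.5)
The plan is to identify $M$ with the zero section $\{\zeta=0\}\subset D(L)$ and to use the fiberwise circle action. The map $\sigma_\theta:(z,\zeta)\mapsto(z,e^{i\theta}\zeta)$ is globally well defined on $L$ and preserves $|\zeta|_h$, hence $D(L)$. The potential $\widetilde\psi$ in \eqref{g fct} depends only on $z$ and $X=|\zeta|^2h(z)$, so each $\sigma_\theta$ is a holomorphic isometry of $\widetilde g_D$. By uniqueness of the complete K\"ahler--Einstein metric we could argue this invariance independently, but the explicit potential already gives it. Taking $\theta=\pi$, the involution $\sigma:(z,\zeta)\mapsto(z,-\zeta)$ is an isometry whose fixed point set is exactly the zero section. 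A connected component of the fixed point set of an isometry is a totally geodesic submanifold, by the standard argument: a geodesic starting tangent to the fixed set is mapped to a geodesic with the same initial data. Hence $M$ is totally geodesic in $(D(L),\widetilde g_D)$. This step is essentially free.

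It remains to identify the induced metric on $\{\zeta=0\}$ as $\alpha g_M$. I will use the local formula $\widetilde g_{i\bar j}=\frac{Y'}{X}X_iX_{\bar j}+Y(\log X)_{i\bar j}$ from the Corollary, restricted to base directions $1\le i,j\le n$. Write $X=|\zeta|^2h$, so $(\log X)_{i\bar j}=(\log h)_{i\bar j}=g_{i\bar j}$ for $i,j\le n$. Also $X_iX_{\bar j}/X=|\zeta|^2h_ih_{\bar j}/h$, which tends to $0$ as $\zeta\to0$ provided $Y'$ stays bounded near $X=0$. Since $Y$ is real analytic at $0$ (it comes from the real-analytic $y$ via $Y(X)=\frac12(y(X^{1/2})-\frac{2\lambda_0}{m+1})$ with $y$ even), $Y'$ is bounded near $0$. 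Hence on the zero section
\[
\widetilde g_{i\bar j}=Y(0)\,g_{i\bar j}=\frac{1-\lambda_0}{m+1}g_{i\bar j},
\]
which is $\alpha g_M$.

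The only delicate point is that the formula for $\widetilde g_{i\bar j}$ was derived using $\log X$, which is singular at $\zeta=0$. I would therefore justify the restriction by continuity, since $\widetilde g_D$ is smooth across the zero section. Alternatively, I could compute directly from $\widetilde\psi=\frac{1}{m+1}\log(Y^{m-1}Y')+\alpha\log h$. Because $Y^{m-1}Y'$ is a smooth function of $X$, its $z$-Hessian at $\zeta=0$ vanishes: every term carries a factor $|\zeta|^2$. This again gives $\alpha g_{i\bar j}$ on the zero section. I expect this regularity check to be the main (minor) obstacle. Everything else follows from the isometric involution argument.
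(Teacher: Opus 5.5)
Your proof is correct and takes the same approach as the paper. The paper gets the induced metric $\alpha g_M$ by restricting the potential $\widetilde\psi$ to the zero section, and gets total geodesicity because the zero section is the fixed-point set of the isometric circle action $(z,\zeta)\mapsto(z,e^{\sqrt{-1}\theta}\zeta)$. Your extra care at $\zeta=0$ is worthwhile, but the potential-restriction route needs $\log(Y^{m-1}Y')$ to be smooth at $X=0$, not just $Y^{m-1}Y'$. That holds because $Y^{m-1}Y'=\phi^{-(m+1)}$ with $\phi(0)>0$, so in fact $Y'(0)>0$; the Corollary's assertion $Y'(0)=0$ is a slip, and the restricted potential is $\alpha\log h$ plus a harmless constant.
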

\begin{proof}
By \eqref{g fct}, we have $\widetilde{\psi}|_{M}=\alpha\log h$. This implies $(M, \alpha g_{M})$ is a K\"ahler submanifold of $(D(L), \widetilde{g}_{D})$.
Let $f: M\rightarrow D(L)$ be the inclusion map.        Since  $M$ is the fixed point set of the circle group $S^{1}$
acting by $(e^{\sqrt{-1}\theta};(z, \zeta))\rightarrow (z, e^{\sqrt{-1}\theta} \zeta)$.
Hence,
$f:(M, \alpha g_{M})\rightarrow(D(L), \widetilde{g}_{D})$ is a totally  geodesic holomorphic isometric
embedding.
\end{proof}

\section{The estimate of the curvatures}\label{4}
\subsection{A local coordinate}

Let $\pi$ be the project mapping from $L$ to $M$. The project point of $p_{0}$ is denoted by $p'_{0}$.
 We take the geodesic
coordinate $(U, z)$ around  $p'_{0}$.  The metric $g_{M}$ is denoted by  $\sum_{j,k=1}^{n} g_{j\bar k}dz_{j} \otimes d\overline{z}_{k}$.
 At the point $p'_{0}$, we have that $g_{j\bar k}=\delta_{j\bar k}$,  and all first derivatives of $g_{j\bar k}$ are zero.
Let $z_{0}$ be the coordinate of  $p'_{0}$,
and $\varphi$ be a K\"ahler potential of $g_{M}$ in $U$.
Let $\varphi(z,w)$ be the polarized function of $\varphi$ on $U\times \mathrm{conj}(U)$.
Define another K\"ahler potential function by
 $$\psi(z)=\varphi(z,\bar z)-\varphi(z,\bar z_{0})-\varphi(z_{0}, \bar z)+\varphi(z_{0},\bar z_{0}),$$
 such that $\frac{\sqrt{-1}}{2}\partial\bar\partial\psi(z)=\frac{\sqrt{-1}}{2}\partial\bar\partial\varphi(z)=\omega_{M}$.
Recall that $\partial\bar\partial\log h=\partial\bar\partial\psi$.
It is equivalent to
$h|e^{f}|^{2}=e^{\psi}$
for a certain holomorphic function $f$ in $U$.
Choose a local free frame $e_{U}$ such that
$h=e^{\psi}.$
Hence we have $$h(z_{0})=1, h_{j}(z_{0})=e^{\psi}\psi_{j}(z_{0})=0, h_{\bar k}(z_{0})=e^{\psi}\psi_{\bar k}(z_{0})=0,
h_{j\bar k}(z_{0})=\delta_{j\bar k},$$
where $h_j=\frac{\partial h}{\partial z_{j}}$, $h_{\bar{k}}=\frac{\partial h}{\partial \bar{z}_{k}}$, $h_{j\bar k}=\frac{\partial^2 h}{\partial z_{j}\partial \bar{z}_k}$. For the local geodesic
coordinates $(U, z)$ of $M$ and a natural local free frame $\{e_{U}\}$ on $U$, $D(L)$ can be written as follows:
$$D(L)\cap \pi^{-1}(U)=\left\{(z,\zeta)\in U\times \mathbb{C}:|\zeta|^{2}h(z)<1\right\}.$$

Recall that $X=|\zeta|^{2}_{h}=|\zeta|^{2}h(z)$ on $D(L)\cap \pi^{-1}(U)$.
In the following, we will give the partial derivatives  of $X$ at the fixed point $(z_{0}, \zeta)$ which will beneficial to compute
the curvatures.

Suppose that $1\leq i, j, k, l\leq n=m-1$.
The first-order partial derivatives  of $X$ at the fixed point $(z_{0}, \zeta)$ are
$ X_{i}|_{z=z_{0}}=X_{\bar j}|_{z=z_{0}}=0$  and
$X_{m}|_{z=z_{0}}=\overline{ \zeta }, X_{\overline{m}}|_{z=z_{0}}= \zeta$.
Thus, we have
\begin{equation}
X|_{z=z_{0}}=|\zeta |^{2}, \partial X|_{z=z_{0}}=\overline{\zeta}d\zeta, \ \overline{\partial} X|_{z=z_{0}}=\zeta d\overline{\zeta };
\end{equation}

Take the partial derivative again.  At the fixed point $(z_{0}, \zeta)$ we have the second-order partial derivatives.
For convenience, we omit the subscripts of the coordinate points.  They are
$X_{ij}=0, X_{i\bar j}=|\zeta|^{2}h_{i\bar j}, X_{\bar j i}=|\zeta|^{2}h_{\bar j i}=\overline{X}_{\overline{i}j},$
$X_{i \overline{m}}=X_{mj}=X_{mm}=X_{\overline{m}i}= 0, X_{m\overline{m}}=X_{\overline{m}m}= 1.$
It implies that at the  point $(z_{0}, \zeta)$, there are
$$\overline{\partial} X_{i}=\sum_{l=1}^{n} X_{i\overline{l}}d\overline{z}_{l},
\overline{\partial} X_{\bar j}=0, \partial X_{\overline{j}}=\sum_{k=1}^{n} X_{\overline{j}k}dz_{k},$$ and
$\overline{\partial} X_{m}=d\overline{\zeta },
\partial X_{m}=0,
\overline{\partial} X_{\overline{m}}=0,
\partial X_{\overline{m}}=d\zeta , \overline{\partial} (X_{m}X_{\overline{j}})=0,
\partial (X_{m}X_{\overline{j}})=\overline{\zeta}X_{\bar{j}k}dz_{k}.$

Moreover, at any point $(z, \zeta)$, we have
$
(\log X)_{m \overline{m}}
\equiv(\log X)_{m\bar j }\equiv(\log X)_{i\overline{m} }\equiv0,$
and at the fixed point $(z_{0}, \zeta)$, we have $(\log X)_{i\bar j }=h_{i\bar j }$,
 $$\overline{\partial}\partial X_{\overline{j}}=\sum_{k=1}^{n}\zeta h_{\overline{j}k}d\overline{\zeta }dz_{k},
\overline{\partial} \partial X_{m}= \sum_{k,l=1}^{n} \overline{\zeta }h_{k\overline{l}}dz_{k}d\overline{z}_{l}=\overline{\zeta }dz T_{z_{0}} \overline{dz}^{t},$$
$$\overline{\partial} \partial (X_{m} X_{\overline{m}})
=2|\zeta |^{2}dz T_{z_{0}}\overline{dz}^{t}+|d\zeta |^{2},
\overline{\partial} \partial \log X=
dz T_{z_{0}} \overline{dz}^{t}, \overline{\partial} \partial X
=|\zeta|^{2}dz T_{z_{0}} \overline{dz}^{t}+d\zeta d\overline{\zeta },$$
where   $T$ is the matrix of $g_{M}$, and at the point $z_{0}$, $T_{z_{0}}$ is an identity matrix.
\subsection{Holomorphic sectional curvature}
Denote by $G$ the matrix of  the complete K\"ahler-Einstein metric. The curvature tensor is
\begin{equation}\label{equ:formula R}
R_{D}=d\eta\left(-\bar\partial\partial G+\partial G (G)^{-1}\overline{\partial G}^{t}\right)\overline{d\eta}^{t},
\end{equation}
where $\partial=\sum_{k=1}^{m}d\eta_{k}\frac{\partial}{\partial \eta_{k}}$, and $\bar{\partial}=\sum_{l=1}^{m}d\bar{\eta}_{l}\frac{\partial}{\partial \bar{\eta}_{l}}$ are differential operator and
\begin{eqnarray}\label{G}
G &=&\left(\frac{Y'}{X}X_{i}X_{\bar j}+Y(\log X)_{i\bar j}\right),
\end{eqnarray}
where $1\leq i, j \leq m$.
Now, we compute the value  $R_{D}$ at $(z_{0}, \zeta)$.
\begin{lemma}\label{cur 0}
The curvature tensor $R_{D}$ at $(z_{0}, \zeta )$
is
\begin{eqnarray*}
R_{D}(z_{0}, \zeta )
&=&-2\left(1+\frac{C}{Y^{m+1}}+\frac{\lambda_{0}}{mY}\right)\left\{\sum_{i,j=1}^{n}Y(\log X)_{i\bar j}dz_{i}\overline{dz_{j}}\right\}^{2}+YR_{ M}|_{z=z_{0}}\\
&&-4\left(1-\frac{mC}{Y^{m+1}}\right)\left\{\sum_{i,j=1}^{n}Y(\log X)_{i\bar j}dz_{i}\overline{dz_{j}}\right\}\left\{Y'd\zeta d\overline{\zeta }\right\}\\
&&-\left(2+\frac{m(m-1)C}{Y^{m+1}}\right)\left\{Y'd\zeta d\overline{\zeta }\right\}^{2}.
 \end{eqnarray*}
\end{lemma}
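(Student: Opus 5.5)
The plan is to evaluate formula \eqref{equ:formula R} directly at $(z_{0},\zeta)$, using the expression \eqref{G} for $G$ and the table of derivatives of $X$ from the previous subsection. Since $X=|\zeta|^{2}h(z)$, we have $(\log X)_{i\bar j}=(\log h)_{i\bar j}=g_{i\bar j}$ identically for $1\le i,j\le n$, and all mixed or fiber entries of $(\log X)$ vanish. Consequently
\[
G=\frac{Y'}{X}\,\partial X\otimes\overline{\partial}X+Y\,\pi^{*}g_{M}
\]
holds everywhere. At $(z_{0},\zeta)$ we have $\partial X=\overline{\zeta}d\zeta$, so $G$ is block diagonal, $G=\mathrm{diag}(Y I_{n},\,Y')$, and $G^{-1}=\mathrm{diag}(Y^{-1}I_{n},\,Y'^{-1})$.

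\textbf{First derivatives.} I will compute $\partial G$ as
\[
Y'\,\partial X\,(g_{i\bar j})
+\Big(\frac{Y''}{X}-\frac{Y'}{X^{2}}\Big)\partial X\,X_{i}X_{\bar j}
+\frac{Y'}{X}\,\partial(X_{i}X_{\bar j}).
\]
The term $Y\,\partial g_{i\bar j}$ is absent because the coordinates are geodesic. Plugging in the values at $(z_{0},\zeta)$ from the previous subsection, $\partial G$ has only a few nonzero entries:
\begin{itemize}
\item the base block $Y'\overline{\zeta}\,d\zeta\,I_{n}$;
\item the entries $(m,\bar j)$, equal to $\frac{Y'}{X}\overline{\zeta}\zeta\, dz_{j}$, i.e.\ $Y'dz_{j}$, coming from $\partial(X_{m}X_{\bar j})$;
\item the $(m,\bar m)$ entry, built from $Y''$, $Y'$ and $d\zeta$.
\end{itemize}
From this I form $\partial G\,G^{-1}\,\overline{\partial G}^{t}$.

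\textbf{Second derivatives.} Next I compute $\overline{\partial}\partial G$. The base block contributes $Y\,\overline{\partial}\partial g_{i\bar j}$, which together with the vanishing first derivatives gives exactly $Y R_{M}|_{z_{0}}$. It also contributes $Y'\,\overline{\partial}\partial X\, g_{i\bar j}$ and $Y''\,|\partial X|^{2} g_{i\bar j}$. For the rank-one part I use the listed formulas for $\overline{\partial}\partial(X_{m}X_{\bar m})$, $\overline{\partial}\partial X_{m}$, $\overline{\partial}\partial X_{\bar j}$ and so on. The result is a combination of $Y',Y'',Y'''$ multiplying the three quadratic forms $\{dz\,dz^{*}\}^{2}$, $\{dz\,dz^{*}\}\{d\zeta d\overline{\zeta}\}$ and $\{d\zeta d\overline{\zeta}\}^{2}$; here I use $X=|\zeta|^{2}$ to convert powers of $|\zeta|^{2}$.

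\textbf{Reduction via the ODE.} I then eliminate the derivatives with \eqref{ode}. Solving it gives
\[
XY'=Y^{2}+\frac{\lambda_{0}}{m}Y+\frac{C}{Y^{m-1}}.
\]
Differentiating once and twice expresses $XY''$ and $X^{2}Y'''$ as polynomials in $Y$, $Y'$ and $C/Y^{m+1}$. With this, each coefficient, normalized by $Y^{2}$, $YY'$ or $Y'^{2}$ respectively, should collapse to:
\begin{itemize}
\item $-2\big(1+\frac{C}{Y^{m+1}}+\frac{\lambda_{0}}{mY}\big)$ for $\{dz\,dz^{*}\}^{2}$, which is just $-2XY'/Y^{2}$;
\item $-4\big(1-\frac{mC}{Y^{m+1}}\big)$ for the mixed term;
\item $-\big(2+\frac{m(m-1)C}{Y^{m+1}}\big)$ for $\{d\zeta d\overline{\zeta}\}^{2}$.
\end{itemize}

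The main obstacle is the last step: the fiber–fiber coefficient involves $Y'''$ and cancellation against the $\partial G\,G^{-1}\overline{\partial G}^{t}$ term. I would handle it by writing $F=XY'$, so that $F'=Y'(2Y+\frac{\lambda_{0}}{m}-(m-1)CY^{-m})$, and doing all algebra in terms of $F$ and $Y$ before substituting. The mixed coefficient is similar but simpler, since it involves only $Y''$ together with the term $Y'^{2}/Y$ from $G^{-1}$.
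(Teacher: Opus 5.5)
Your route is the paper's: at $(z_0,\zeta)$ you take $G=\mathrm{diag}(YI_n,Y')$, evaluate $-\bar\partial\partial G+\partial G\,G^{-1}\overline{\partial G}^{t}$ entry by entry, and reduce with the ODE. However, one entry of $\partial G$ is wrong, and the error breaks the mixed coefficient.

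Since $(\log X)_{m\bar j}=0$ and $X_mX_{\bar j}/X=\bar\zeta h_{\bar j}$, you have $G_{m\bar j}=Y'(X)\,\bar\zeta\,h_{\bar j}(z)$ identically. At $z_0$, where $h_{\bar j}=0$ and $h_{\bar j k}=\delta_{jk}$, this gives $\partial G_{m\bar j}=Y'\bar\zeta\,dz_j$, not $Y'dz_j$. Equivalently, $\partial X_{\bar j}=|\zeta|^2dz_j$ there, not $\zeta\,dz_j$. This is how the paper gets $\partial G_{12}=Y'\bar\zeta\,dz\,T_{z_0}$. As a sanity check, $G_{m\bar j}$ carries the $S^1$-weight of $\bar\zeta$, so every $z$-derivative of it must as well.

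With your value, the term $\partial G\,G^{-1}\overline{\partial G}^{t}$ contributes $\frac{Y'^2}{Y}|dz|^2$ to the $(m,\bar m)$ entry. It contributes $\frac{Y'^2}{Y}\zeta\,dz_j\,d\bar\zeta$ to the $(m,\bar j)$ entry, which has the wrong homogeneity in $\zeta$. Meanwhile the base block, which you have right, gives $\frac{XY'^2}{Y}|d\zeta|^2$. Your closing phrase ``the term $Y'^2/Y$ from $G^{-1}$'' reflects this slip. The correct term is $XY'^2/Y$ in all four mixed slots. Only then does each mixed contribution equal $\frac{XY'^2}{Y}-XY''-Y'$, which the ODE turns into $-YY'(1-\frac{mC}{Y^{m+1}})$, producing the factor $-4$ in the statement.

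The step you flagged as the main obstacle is in fact short with your $F=XY'$ device. Put $\Phi(Y)=2Y+\frac{\lambda_0}{m}-(m-1)CY^{-m}$. Then $F'=Y'\Phi$, i.e. $XY''=Y'(\Phi-1)$, and $F''=2Y''+XY'''=Y''\Phi+Y'^2\Phi'$. Hence
$\frac{XY''^2}{Y'}-XY'''-Y''=Y''(\Phi-1)-Y''(\Phi-1)-Y'^2\Phi'=-(2+\frac{m(m-1)C}{Y^{m+1}})Y'^2$.
Likewise, $\frac{XY'^2}{Y}=\frac{Y'}{Y}F$ gives
$\frac{XY'^2}{Y}-XY''-Y'=Y'(Y+\frac{\lambda_0}{m}+CY^{-m}-\Phi)=-YY'(1-\frac{mC}{Y^{m+1}})$.
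These are exactly the two identities the paper states without derivation. Once the $(m,\bar j)$ entry is corrected, your argument coincides with the paper's proof and also supplies the ODE algebra the paper omits.
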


\begin{proof}
By \eqref{G},  we know
\begin{equation}\label{equ:Hession of solution 1}
G_{D}=\left(
  \begin{array}{ccc}
 \frac{Y'}{X}X_{m}X_{\overline{m}} &  & \frac{Y'}{X}X_{m}X_{\bar j} \\
  & \\
   \frac{Y'}{X}X_{i}X_{\overline{m}} & &  \ \ \ \frac{Y'}{X}X_{i}X_{\bar j}+Y(\log X)_{i\bar j} \\
  \end{array}
\right),
\end{equation}
where $1\leq j, k\leq n$.
At the point $\eta_{0}=(z_{0}, \zeta )$, we have
\begin{equation}
G_{D}=\left(
  \begin{array}{cc}
 Y' & 0 \\

  0  &  \ \ \ Y(\log X)_{i\bar j} \\
  \end{array}
\right)
\text{and} \
G_{D}^{-1}=\left(
  \begin{array}{cc}
 \frac{1}{Y'} & 0 \\

  0  &  \ \ \ \frac{1}{Y}(\log X)^{-1}_{i\bar j} \\
  \end{array}
\right).
\end{equation}
For convenience, we define
\begin{equation}
\partial G_{D}=\left(
  \begin{array}{cc}
 \partial G_{11} &  \partial G_{12} \\

   \partial G_{21}  &   \partial G_{22} \\
  \end{array}
\right).
\end{equation}
By a direct computation, at $\eta_{0}=(z_{0}, \zeta )$, we have
$$
\partial G_{11}
=Y''\overline{\zeta }d\zeta ,
\partial G_{12}
=Y' \overline{\zeta } dz T_{z_{0}},
\partial G_{21}=0,
\partial G_{22}
= Y' \bar \zeta  d\zeta  T_{z_{0}}+Y \partial T_{z_{0}},
$$
where   $T$ is the matrix of $g_{M}$, and at the point $z_{0}$, $T_{z_{0}}$ is an identity matrix. Moreover, we have
\begin{eqnarray*}
\overline{\partial} \partial G_{11}
&=&(Y'''X+Y'')|d\zeta |^{2}+(Y''X+Y')dzT_{z_{0}}\overline{dz}^{t},\\
\overline{\partial}\partial G_{12}
&=&(XY''+Y')dz T_{z_{0}} d\overline{\zeta }+\overline{\zeta }Y'd z \overline{\partial} T_{z_{0}},
\overline{\partial}\partial G_{21}=\overline{\overline{\partial}\partial G_{12}}|_{z=z_{0}}^{t},\\
\overline{\partial}\partial G_{22}
&=& (XY''+Y')|d\zeta |^{2}T_{z_{0}}+XY' dz T_{z_{0}} \overline{dz}^{t} T_{z_{0}}+XY'\overline{T_{z_{0}}}^{t}d\overline{z}^{t}dzT_{z_{0}}\\
&&+
Y'\left(\overline{\zeta }d\zeta \overline{\partial}T_{z_{0}}+\zeta d\overline{\zeta }\partial T_{z_{0}}\right)+Y\overline{\partial}\partial T_{z_{0}}.
\end{eqnarray*}

Define
\begin{equation*}
-\bar\partial\partial G+\partial G (G)^{-1}\overline{\partial G}^{t}=\left(
  \begin{array}{cc}
  R_{11} &   R_{12} \\

    R_{21}  &  R_{22} \\
  \end{array}
\right),
\end{equation*}
then
$R_{D}=d\zeta  R_{11} d\overline{\zeta }+d\zeta  R_{12} \overline{dz}^{t}+dz R_{21} d\overline{\zeta }+dz R_{11} \overline{dz}^{t}$.
By the discussion above, at the point $\eta_{0}=(z_{0}, \zeta )$, we know
\begin{eqnarray*}
R_{11}&=&(\frac{XY'^{2}}{Y}-XY''-Y')dzT_{z_{0}}\overline{dz}^{t}+(\frac{XY''^{2}}{Y'}-XY'''-Y'')|d\zeta |^{2},\\
R_{12}&=&(\frac{XY'^{2}}{Y}-XY''-Y')dzT_{z_{0}}d\overline{\zeta }, R_{21}=\overline{R_{12}}^{t},\\
R_{22}&=&(\frac{XY'^{2}}{Y}-XY''-Y')|d\zeta |^{2}T_{z_{0}}-XY'dzT_{z_{0}}\overline{dz}^{t}\overline{T_{z_{0}}}^{t}-XY'\overline{T_{z_{0}}}^{t}\overline{dz}^{t}dzT_{z_{0}}\\
&&+
Y(-\overline{\partial}\partial T_{z_{0}}+\partial T_{z_{0}} T_{z_{0}}^{-1}T_{z_{0}} \overline{\partial T_{z_{0}}}^{t}).
\end{eqnarray*}
By \eqref{ode}, we have $XY'=Y^{2}+\frac{\lambda_{0}}{m}Y+\frac{C}{Y^{m-1}}$, $\frac{XY'^{2}}{Y}-XY''-Y'=-YY'(1-\frac{mC}{Y^{m+1}})$ and
\begin{eqnarray*}
\frac{XY''^{2}}{Y'}-XY'''-Y''&=&-(2+\frac{m(m-1)C}{Y^{m+1}})Y'^{2}.
\end{eqnarray*}
Denote by $R_{M}$  the curvature tensor of $g_{M}$.
 The formula  \eqref{equ:formula R} shows that
\begin{eqnarray*}
R_{D}&=&-2Y^{2}\left(1+\frac{C}{Y^{m+1}}+\frac{\lambda_{0}}{mY}\right)(dz T_{z_{0}} \overline{dz}^{t})^{2}+Y R_{M}
\\
&&-4YY'(1-\frac{mC}{Y^{m+1}})dz T_{z_{0}} \overline{dz}^{t}|d\zeta |^{2}-(2+\frac{m(m-1)C}{Y^{m+1}})Y'^{2}|d\zeta |^{4}
 \end{eqnarray*}
at the point $\eta_{0}=(z_{0}, \zeta )$.
The  conclusion follows from it.
\end{proof}
\begin{lemma}
The curvature tensor at $\eta=(z,\zeta )$ can be expressed by
\begin{eqnarray*}
R_{D}
&=&-2\left(\frac{(m+1)C}{Y^{m+1}}+\frac{\lambda_{0}}{ mY}\right)\left\{\sum_{i,j=1}^{m}Y(\log X)_{i\bar j}d\eta_{i}d\overline{\eta}_{j}\right\}^{2}+YR_{ M}\\
&&-2\left(1-\frac{mC}{Y^{m+1}}\right)\left\{\sum_{i,j=1}^{m}g_{i\bar j}d\eta_{i}d\overline{\eta}_{j}\right\}^{2}-\left(\frac{m(m+1)C}{Y^{m+1}}\right)\left\{\sum_{i,j=1}^{m}\frac{Y'}{X}X_{i}X_{\bar j}d\eta_{i}d\overline{\eta}_{j}\right\}^{2}.
 \end{eqnarray*}
\end{lemma}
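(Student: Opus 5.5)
Both sides of the identity are tensors and globally defined on $D(L)$. The base term $YR_M$ should be read as $Y\,\pi^{*}R_M$, and the three quadratic forms are $\pi^{*}$-type expressions in $Y$, $Y'$ and $X$. It therefore suffices to verify the identity at an arbitrary point $\eta_0=(z_0,\zeta)$ in the special coordinates of Section \ref{4}: geodesic coordinates for $g_M$ at $p'_0=z_0$, together with the frame satisfying $h=e^{\psi}$, $h(z_0)=1$, $dh(z_0)=0$. Since $p_0$ is arbitrary, the statement then holds everywhere. The plan is to rewrite the right-hand side of the lemma at $\eta_0$ and show that it coincides with the expression in Lemma \ref{cur 0}.

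First I evaluate the three quadratic forms at $\eta_0$, using the derivatives of $X$ listed in Section \ref{4}. These give $(\log X)_{m\bar j}\equiv 0$, $(\log X)_{m\bar m}\equiv0$, $(\log X)_{i\bar j}=h_{i\bar j}=\delta_{ij}$, $X_i=0$ for $i\le n$, and $X_m=\bar\zeta$. Write
\[
A:=\sum_{i,j=1}^{m}Y(\log X)_{i\bar j}\,d\eta_i d\bar\eta_j=\sum_{i,j=1}^{n}Y(\log X)_{i\bar j}\,dz_i d\bar z_j,
\qquad
B:=\sum_{i,j=1}^{m}\frac{Y'}{X}X_iX_{\bar j}\,d\eta_i d\bar\eta_j=Y'\,d\zeta\, d\bar\zeta .
\]
By \eqref{G}, the form $\sum g_{i\bar j}d\eta_i d\bar\eta_j$ is $\widetilde g_D=A+B$. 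Substituting these, the right-hand side of the lemma becomes
\[
-2\Big(\tfrac{(m+1)C}{Y^{m+1}}+\tfrac{\lambda_0}{mY}\Big)A^2-2\Big(1-\tfrac{mC}{Y^{m+1}}\Big)(A+B)^2-\tfrac{m(m+1)C}{Y^{m+1}}B^2+YR_M .
\]

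Next I expand $(A+B)^2=A^2+2AB+B^2$ and collect coefficients. The coefficient of $A^2$ is
\[
-2\Big(\tfrac{(m+1)C}{Y^{m+1}}+\tfrac{\lambda_0}{mY}+1-\tfrac{mC}{Y^{m+1}}\Big)=-2\Big(1+\tfrac{C}{Y^{m+1}}+\tfrac{\lambda_0}{mY}\Big).
\]
The coefficient of $AB$ is $-4\big(1-\tfrac{mC}{Y^{m+1}}\big)$. The coefficient of $B^2$ is
\[
-2+\tfrac{2mC}{Y^{m+1}}-\tfrac{m(m+1)C}{Y^{m+1}}=-\Big(2+\tfrac{m(m-1)C}{Y^{m+1}}\Big).
\]
These are exactly the coefficients in Lemma \ref{cur 0}, so the two expressions agree at $\eta_0$.

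The one point needing care is the interpretation of the symmetric products: $\{\cdot\}^2$ and $\{\cdot\}\{\cdot\}$ must be read as products of Hermitian forms in the sense of $(2,2)$-tensors, $(\alpha\otimes\alpha)(Z,\bar Z,Z,\bar Z)$, consistent with how Lemma \ref{cur 0} was written. I also have to check that the cross term $2AB$ from $(A+B)^2$ matches the $AB$ term of Lemma \ref{cur 0}, i.e.\ that $R_{12}$ and $R_{21}$ there were already absorbed into the single mixed product. Once the symmetrization convention is fixed to be the same on both sides, this bookkeeping is the only real obstacle.
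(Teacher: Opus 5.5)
Your proposal is correct and is essentially the paper's proof: both rest on Lemma \ref{cur 0} at a point in the special coordinates, then rewrite the combination of $A^2$, $AB$, $B^2$ in terms of $A^2$, $(A+B)^2$, $B^2$ using $\widetilde g_D=A+B$, and your coefficient identities are exactly the ones needed. The one difference is in how the formula is moved to an arbitrary point. The paper does this by an explicit change of normal coordinates, identifying $dz\,T_{\hat z_0}\overline{dz}^{t}$ with $\partial\bar\partial\log X$ and $|d\widetilde\zeta|^2$ with $\frac1X\partial X\,\bar\partial X$; you instead note that these forms are global because $X=|\zeta|_h^2$ is, which is cleaner. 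Your worry about the cross term is unnecessary, since the proof of Lemma \ref{cur 0} already sums the contributions of $R_{11},R_{12},R_{21},R_{22}$ into the coefficient $-4\bigl(1-\frac{mC}{Y^{m+1}}\bigr)$.
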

\begin{proof}
Let $(z, U(p))$ be the local normal coordinate system around $p=z_{0}$.
We choose a fixed point $q$ in $U(p)$, and denote its coordinate  by $\hat{z}_{0}$.
Take a new local normal coordinate $(w, \widetilde{U}(q))$ around $q$.  Denote by $w_{0}$ the new coordinate the point $q$.
Then we have
 $$g_{M}(q)=dwT_{w_{0}}\overline{dw}^{t}=dzT_{\hat{z}_{0}}\overline{dz}^{t}=\sum_{i,j=1}^{n}(\log h)_{i\bar j}(\hat{z}_{0})dz_{i}d\overline{z}_{j}.$$
Notice that for $1\leq i,j\leq n$, $(\log X)_{i \overline{j}}=(\log h)_{i\bar j}$,
$(\log X)_{m \overline{m}}
\equiv(\log X)_{m\bar j }\equiv(\log X)_{i\overline{m} }\equiv0,$ we get
$$g_{M}(q)=\sum_{i,j=1}^{m}(\log X)_{i\bar j}(\hat{z}_{0}, \zeta)d\eta_{i}d\overline{\eta}_{j}.$$
Let $\Phi$  be the  local coordinate transform from $(z, U(p))$ to $(w, \widetilde{U}(q))$ around $q$, i.e.,
$w=\Phi(z)$, $w_{0}=\Phi(\hat{z}_{0}).$
Then we have
$\det h_{i\bar{j}}(z)=\det \widetilde{h}_{i\bar{j}}(w)|\det\frac{\partial\Phi}{\partial z}|^{2},$
and
$$\det h_{i\bar{j}}(\hat{z}_{0})=\det\widetilde{h}_{i\bar{j}}(w_{0})\left|\det\frac{\partial\Phi(\hat{z}_{0})}{\partial z}\right|^{2}=\left|\det\frac{\partial\Phi(\hat{z}_{0})}{\partial z}\right|^{2},$$
The last equality  holds because   $\widetilde{h}_{i\bar{j}}(w_{0})=\delta_{i\bar j}$.
Notice that $X$ is global defined, i.e., $X(z,\zeta)= X(w,\widetilde{\zeta})$. That is $|\zeta|^{2} h(z)=|\widetilde{\zeta}|^{2} \widetilde{h}(w)$.
By the invariance of the first-order differential form, we have
$$d X(z,\zeta)= \sum_{i=1}^{m} X_{i}(z,\zeta)d\eta_{i}=dX(w,\widetilde{\zeta})=\sum_{i=1}^{m} X_{i}(w,\widetilde{\zeta})d\widetilde{\eta}_{i}=\widetilde{\zeta}d\widetilde{\zeta}.$$
Then, we get
$d\widetilde{\zeta}=\frac{1}{\widetilde{\zeta}}\sum_{i=1}^{m} X_{i}d\eta_{i}$.
This implies that
$$|d\widetilde{\zeta}|^{2}=\frac{1}{X(\widehat{z}_{0},\zeta)}\sum_{i,j=1}^{m} X_{i}(\widehat{z}_{0},\zeta)X_{j}(\widehat{z}_{0},\zeta)d\eta_{i}
d\overline{\eta}_{j}.$$

By the arbitrary  of $\hat{z}_{0}$ and Lemma \ref{cur 0},  at the $(z, \zeta)$, we have
\begin{eqnarray}\label{h sect 1}
R_{D}&=&-2\left(1+\frac{C}{Y^{m+1}}+\frac{\lambda_{0}}{mY}\right)\left\{\sum_{i,j=1}^{m}Y(\log X)_{i\bar j}d\eta_{i}d\overline{\eta}_{j}\right\}^{2}+YR_{ M}\\
&&-4\left(1-\frac{mC}{Y^{m+1}}\right)\left\{\sum_{i,j=1}^{m}Y(\log X)_{i\bar j}d\eta_{i}d\overline{\eta}_{j}\right\}\left\{\sum_{i,j=1}^{m}\frac{Y'}{X}X_{i}X_{\bar j}d\eta_{i}d\overline{\eta}_{j}\right\}\\
&&-\left(2+\frac{m(m-1)C}{Y^{m+1}}\right)\left\{\sum_{i,j=1}^{m}\frac{Y'}{X}X_{i}X_{\bar j}d\eta_{i}d\overline{\eta}_{j}\right\}^{2}.
 \end{eqnarray}

Moreover, we have
\begin{eqnarray*}
R_{D}
&=&-2\left(\frac{(m+1)C}{Y^{m+1}}+\frac{\lambda_{0}}{ mY}\right)\left\{\sum_{i,j=1}^{m}Y(\log X)_{i\bar j}d\eta_{i}d\overline{\eta}_{j}\right\}^{2}+YR_{ M}\\
&&-2\left(1-\frac{mC}{Y^{m+1}}\right)\left\{\sum_{i,j=1}^{m}\widetilde{g}_{i\bar j}d\eta_{i}d\overline{\eta}_{j}\right\}^{2}\\
&&-\left(\frac{m(m+1)C}{Y^{m+1}}\right)\left\{\sum_{i,j=1}^{m}\frac{Y'}{X}X_{i}X_{\bar j}d\eta_{i}d\overline{\eta}_{j}\right\}^{2}.
 \end{eqnarray*}
We obtain the conclusion.
\end{proof}

\begin{theorem}
Let $(L, h)$ be a negative line bundle over $M$ such that the dual bundle $(L^{*}, h^{-1})$ induces a complete K\"ahler-Einstein metric $g_{M}$
which Ricci curvature  $\lambda_{0}<1$.
Let $\widetilde{g}_{D}$ be the complete K\"ahler-Einstein metric of the disk bundle \eqref{D1} with Ricci curvature $-(m+1)$. Then $(D, \widetilde{g}_{D})$ is negatively holomorphically pinched if and only  if $(M, g_{M})$ is negatively holomorphically pinched and $\lambda_{0}<-m+2$,
where $m$ is the dimension of the disk bundle.
\end{theorem}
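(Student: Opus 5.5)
The approach is to compute the holomorphic sectional curvature of $\widetilde g_D$ explicitly from Lemma \ref{cur 0}, as a quadratic form in the base and fiber parts of the tangent vector. The coefficients will then be controlled by the bounds \eqref{inequ1}--\eqref{inequ3}. Every point of $D(L)$ can be written as $(z_0,\zeta)$ in a geodesic coordinate centered at its projection, as in Section 4.1, so it suffices to work at $(z_0,\zeta)$.

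\textbf{Step 1: the curvature formula.} Take $V=\sum_{i=1}^n a_i\partial_{z_i}+b\,\partial_\zeta$ at $(z_0,\zeta)$ and set
\[
A=Y\|a\|^2,\qquad B=Y'|b|^2 .
\]
Then $\widetilde g_D(V,V)=A+B$. Since $T_{z_0}=I$, we have $YR_M(a)=\frac{H(g_M,a)}{2Y}A^2$. Lemma \ref{cur 0} then gives
\[
H(\widetilde g_D,V)=\frac{P\,A^2+Q\,AB+S\,B^2}{(A+B)^2},
\]
where
\[
P=-4f+\frac{H(g_M,a)}{Y},\qquad Q=-8\Bigl(1-\frac{mC}{Y^{m+1}}\Bigr),\qquad S=-2\Bigl(2+\frac{m(m-1)C}{Y^{m+1}}\Bigr),
\]
and $f=1+\frac{C}{Y^{m+1}}+\frac{\lambda_0}{mY}$. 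The key observation is that the ODE \eqref{ode} gives
\[
f=\frac{XY'}{Y^2}\ \ge 0 .
\]
Moreover $f$ vanishes at $X=0$; this can also be checked directly from $Y(0)=\alpha$ and the value of $C$.

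\textbf{Step 2: necessity.} Restrict to the zero section $X=0$.

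For pure base directions ($b=0$) we get $H=H(g_M,a)/\alpha$. This is consistent with Lemma \ref{lem:totals}, since $(M,\alpha g_M)$ is totally geodesic. Pinching of $D$ therefore forces pinching of $(M,g_M)$.

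For the pure fiber direction, read near $X=0$ through the limit of $Y'/X$, so that $B$ is nondegenerate, we get
\[
H\to S|_{X=0}=-2\cdot\Bigl(-\frac{(m+1)(\lambda_0+m-2)}{1-\lambda_0}\Bigr),
\]
using the endpoint value in \eqref{inequ3}. This is negative only if $\lambda_0<-m+2$.

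\textbf{Step 3: sufficiency.} Assume $-\kappa\le H(g_M)\le -c<0$ and $\lambda_0<-m+2$. Since $m\ge 2$, this forces $\lambda_0<0$.

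\emph{Bounds on $Q$ and $S$.} By \eqref{inequ2} and \eqref{inequ3}, $Q$ and $S$ lie in compact subintervals of $(-\infty,0)$, uniformly in $X$.

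\emph{Bounds on $P$.} Here $Y$ increases from $\alpha$ to $+\infty$ as $X\to1$; its blow-up follows from the ODE, since $XY^{m-1}Y'\sim Y^{m+1}$. Also $1/Y\in(0,1/\alpha]$ and $f\in[0,1+|C|/\alpha^{m+1}+|\lambda_0|/(m\alpha)]$, so $P$ is bounded below. For the upper bound, split the range of $Y$:
\begin{itemize}
\item on $\{Y\ge Y_1\}$ with $Y_1$ large, $f\ge 1/2$, because $C/Y^{m+1}\to0$ and $\lambda_0/(mY)\to0$, so $P\le -2$;
\item on $\{Y\le Y_1\}$, $P\le H(g_M)/Y\le -c/Y_1$.
\end{itemize}
Hence $P\le-\delta$ uniformly for some $\delta>0$.

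\emph{Conclusion.} Write $t=A/(A+B)\in[0,1]$. Then
\[
H=Pt^2+Qt(1-t)+S(1-t)^2,
\]
with all three coefficients in $[-K,-\delta]$. Since $t^2+t(1-t)+(1-t)^2\in[3/4,1]$, we obtain
\[
-K\le H\le -\tfrac34\delta .
\]

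The main obstacle is the uniform upper bound on $P$. The $-\lambda_0/(mY)$ term is positive, and the two contributions $-4f$ and $H_M/Y$ degenerate in opposite regimes: $f$ vanishes at $X=0$, while $H_M/Y$ vanishes as $X\to1$. The identity $f=XY'/Y^2$, together with the monotonicity and blow-up of $Y$, is what lets the two regimes be glued.
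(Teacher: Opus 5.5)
Your proof is correct, and its skeleton is the paper's. For necessity you both use the totally geodesic zero section (Lemma \ref{lem:totals}) together with the fiber direction at $X=0$, where Lemma \ref{cur 0} gives $\frac{2(m+1)(\lambda_0+m-2)}{1-\lambda_0}$. For sufficiency you both use Lemma \ref{cur 0}, with the mixed and fiber coefficients controlled by \eqref{inequ2} and \eqref{inequ3}.

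The real difference is how you handle the base coefficient. The paper bounds $1+\frac{C}{Y^{m+1}}+\frac{1}{mY}(\lambda_0-\frac{mC_2}{2})$ from below by a case analysis in $\lambda_0$ and in the sign of $C_2-\frac{2\lambda_0}{m}$, using monotonicity of an auxiliary function of $Y$. Where that works, it yields explicit pinching constants. You instead read off from \eqref{ode} that $1+\frac{C}{Y^{m+1}}+\frac{\lambda_0}{mY}=\frac{XY'}{Y^2}\ge 0$. You then glue the regime $Y\le Y_1$, where $H(g_M)/Y$ supplies the negativity, to the regime $Y\ge Y_1$, where $-4f$ does. Your constants are uniform but not explicit.

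Your route is also the more robust one. In the paper's case $\lambda_0<-m$, the chain ends with $1+\frac{\lambda_0}{m}\frac{m+1}{1-\lambda_0}=\frac{m+1}{m(1-\lambda_0)}$. The left side actually equals $\frac{m+\lambda_0}{m(1-\lambda_0)}$, which is negative there, so that case is not established as written. Your identity covers all $\lambda_0<-m+2$ at once. You also make the negative lower bound explicit, which the paper omits. Your parametrization with $1-t+t^2\in[\frac{3}{4},1]$ plays the role of the paper's inequality $a^2+b^2\ge\frac{1}{2}(a+b)^2$.

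Two minor points.

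\emph{Blow-up of $Y$.} The blow-up of $Y$ at $X=1$ does not follow from \eqref{ode} alone. Since $X=0$ is a singular point, the solutions with $Y(0)=\alpha$ form a one-parameter family indexed by $Y'(0)$; the blow-up comes from $Z(1)=0$. Your split never uses it anyway: it only needs $Y\ge\alpha$, $f\ge 0$, and $f\to 1$ as $Y\to\infty$.

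\emph{Fiber direction at the zero section.} Your limiting argument there is harmless but unnecessary. From $Y'(X)=-\frac{1}{4}X^{-1/2}Z'(X^{1/2})/Z^2(X^{1/2})$ and $Z''(0)<0$, one gets $Y'(0)=-Z''(0)/(m+1)^2>0$. So the paper's claim $Y'(0)=0$ is a slip, the metric is nondegenerate along the zero section, and $H(\partial_\zeta)=S(0)$ can be evaluated there directly.
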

\begin{proof}
By Lemma \ref{lem:totals}, $(M, \alpha g_{M})$ is a totally geodesic submanifold of $(D(L), \widetilde{g}_{D})$, where $\alpha=\frac{1-\lambda_{0}}{m+1}$.
By Gauss equation, the holomorphic sectional curvature $H(\alpha g_{M}, Z)=H(g_{D}, Z)$ for any non-zero vector $Z\in T^{(1,0)}_{p}M$.
Suppose that $(D, \widetilde{g}_{D})$ is negatively holomorphically pinched, then  $(M, \alpha g_{M})$ is negatively holomorphically pinched.
This implies that $(M,  g_{M})$ is also  negatively holomorphically pinched.
Moreover, at the point $p$ in the zero section of the disk bundle $D(L)$,
we choose   the  holomorphic tangent  vector $\xi_{0}=\frac{\partial}{\partial \zeta}\in T_{p}D$. By Lemma \ref{cur 0},   the holomorphic sectional curvature
$
H(g_D, p, \xi_{0})=\frac{2(m+1)(\lambda_{0}+m-2)}{1-\lambda_{0}}.
$
If $\lambda_{0}\in [ -m+2, 1)$, then  $H(g_D, p, \xi_{0})\geq 0$.
It is a conflict with the assumption that $\widetilde{g}_{D}$ is negatively holomorphically pinched. The necessity  follows from it.

We now prove the sufficiency. Suppose that $ 2C_{1}\leq H(g_{M})\leq 2C_{2}<0$, i.e.,
 $$C_{1} (\sum_{i,j=1}^{n}(\log X)_{i\bar j})^{2}\leq R_{ M}\leq C_{2} (\sum_{i,j=1}^{n}(\log X)_{i\bar j})^{2}.$$
By Lemma \ref{cur 0},  we have
\begin{eqnarray}\label{equ:Ra}
R_{D}(z_{0}, \zeta )
&\leq&-2\left((1+\frac{C}{Y^{m+1}})+\frac{1}{mY}(\lambda_{0}-\frac{mC_{2}}{2})\right)\left\{\sum_{i,j=1}^{n}Y(\log X)_{i\bar j}dz_{i}\overline{dz_{j}}\right\}^{2}\\
&&-4\left(1-\frac{mC}{Y^{m+1}}\right)\left\{\sum_{i,j=1}^{n}Y(\log X)_{i\bar j}dz_{i}\overline{dz_{j}}\right\}\left\{Y'd\zeta d\overline{\zeta }\right\}\\
&&-\left(2+\frac{m(m-1)C}{Y^{m+1}}\right)\left\{Y'd\zeta d\overline{\zeta }\right\}^{2}.
 \end{eqnarray}
If $\lambda_{0}\in (-\infty, -m+2)$, then  the   coefficients of the second and the third tensors on the right hand side of \eqref{equ:Ra} are bounded above by  certain  negative constants by \eqref{inequ2} and \eqref{inequ3}.
For the first one, we divide two cases.
\begin{enumerate}
  \item   $ \lambda_{0}\in [-m, -m+2)$. Then we know $C<0$.
  Define
  $$f(t)=1+\frac{C}{t^{m+1}}+\frac{1}{mt}(\lambda_{0}-\frac{mC_{2}}{2})
  \ \ \ \text{on} \ \ \   (0,+\infty).$$
If $C_{2}\geq \frac{2\lambda_{0}}{m}$, then
 $f'(t)=(m+1)\frac{-C}{t^{m+2}}-\frac{1}{mt^{2}}(\lambda_{0}-\frac{mC_{2}}{2})>0.$
It is strictly increasing  on the open interval $ (0,+\infty)$.
Hence, we have
$$f(t)\geq f(\frac{1-\lambda_{0}}{m+1})=-\frac{(m+1)C_{2}}{2(1-\lambda_{0})}>0 \ \ \text{on} \ \  [\frac{1-\lambda_{0}}{m+1},+\infty).$$
If $C_{2}< \frac{2\lambda_{0}}{m}$, then we have $$f(t)\geq1+\frac{C}{t^{m+1}}\geq\frac{-(m+1)\lambda_{0}}{m(1-\lambda_{0})}>0 \ \  \text{on} \ \ [\frac{1-\lambda_{0}}{m+1},+\infty).$$ Since $Y(X)\in [\frac{1-\lambda_{0}}{m+1},+\infty)$,  we know
 the  coefficient of the first tensor on the right hand side of \eqref{equ:Ra} is bounded above by a certain  negative constant.

According to  \eqref{inequ1}-\eqref{inequ3}, we
 define a positive constant as follows:
 $$A=\min \left\{-\frac{(m+1)C_{2}}{2(1-\lambda_{0})}, -\frac{(m+1)\lambda_{0}}{m(1-\lambda_{0})}, -\frac{(m+1)(\lambda_{0}+m-2)}{2(1 -\lambda_{0})}\right\}.$$
Then we have
 \begin{eqnarray*}
R_{D}(z_{0}, \zeta )
&\leq&-2A\left(\left\{\sum_{i,j=1}^{n}Y(\log X)_{i\bar j}dz_{i}\overline{dz_{j}}\right\}^{2}+\left\{Y'd\zeta d\overline{\zeta }\right\}^{2}\right)\\
&\leq&-A\left(\sum_{i,j=1}^{n}Y(\log X)_{i\bar j}dz_{i}\overline{dz_{j}}+Y'd\zeta d\overline{\zeta }\right)^{2}=-A\widetilde{g}_{D}^{2}.
 \end{eqnarray*}
  \item  $\lambda_{0} \in   (-\infty, -m)$. This implies that $C>0$.
 Notice that
 $$1+\frac{C}{Y^{m+1}}+\frac{1}{mY}(\lambda_{0}-\frac{mC_{2}}{2})>1+\frac{\lambda_{0}}{mY}>1+\frac{\lambda_{0}}{m}\frac{m+1}{1-\lambda_{0}}
 =\frac{m+1}{m(1-\lambda_{0})}>0.$$
The coefficients of the  first  tensor   on the right hand  of \eqref{equ:Ra} is bounded above by a negative constant. Define $A=\min\left\{ \frac{m+1}{m(1-\lambda_{0})},1\right\}.$
 By the similar method of Item (1), we have $R_{D}(z_{0}, \zeta )\leq-A\widetilde{g}_{D}^{2}$.
\end{enumerate}

The discussion above shows that the holomorphic sectional curvature of $H(\widetilde{g}_{D})$ are bounded above by a negative constant.
As for the existence of a negative lower bound  for  $H(\widetilde{g}_{D})$, it can be obtained in a completely analogous
but more easier manner.  Hence, we omit the detains.
\end{proof}

\subsection{Sectional curvature}
We get an estimation  of the sectional curvature of the complete K\"ahler-Einstein metric  by the formula \eqref{Relation2}.
\begin{theorem}
Let $(L, h)$ be a negative line bundle over $M$ such that the dual bundle $(L^{*}, h^{-1})$ induces a complete K\"ahler-Einstein metric $g_{M}$
with Ricci curvature  $\lambda_{0}<1$.
Let $\widetilde{g}_{D}$ be the complete K\"ahler-Einstein metric of the disk bundle \eqref{D1} with Ricci curvature $-(m+1)$.
 If  $(M, g_{M})$ is a negatively
  pinched K\"ahler-Einstein manifold with  Ricci curvature  $\lambda_{0}<-m$ and the  sectional curvature are bounded above by $\frac{2\lambda_{0}}{m}$, then $(D, \widetilde{g}_{D})$ is negatively  pinched,
where $m$ is the dimension of the disk bundle.
\end{theorem}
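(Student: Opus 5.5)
We work at a point $\eta_0=(z_0,\zeta)$ in the normal coordinates of Section 4.1, so that we can use the explicit form of $R_D(z_0,\zeta)$ in Lemma \ref{cur 0}. We write it as a combination of the three tensors
\[
\Gamma_1=\Big\{\sum_{i,j=1}^n Y(\log X)_{i\bar j}dz_i\overline{dz_j}\Big\}^2,\qquad \Gamma_2=\Big\{\sum Y(\log X)_{i\bar j}dz_i\overline{dz_j}\Big\}\{Y'd\zeta d\bar\zeta\},\qquad \Gamma_3=\{Y'd\zeta d\bar\zeta\}^2,
\]
plus the base term $YR_M$. The metric is $\widetilde g_D=g_1+g_2$ at $\eta_0$, with $g_1=Y(\log X)_{i\bar j}dz_id\bar z_j$ and $g_2=Y'|d\zeta|^2$. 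By Lemma \ref{lem:tauR}, the sectional curvature in a real $2$-plane $u\wedge v$ is $k=2\tau(R_D)(u,v)/\|u\wedge v\|^2$. We therefore need two-sided bounds
\[
-c_1\|u\wedge v\|^2\le 2\tau(R_D)(u,v)\le -c_2\|u\wedge v\|^2,\qquad 0<c_2\le c_1,
\]
uniformly in $\eta_0$ and in the plane. The operator $\tau$ is linear, so it suffices to control $\tau$ of each piece.

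\emph{Step 1 (base term).} The hypotheses on $(M,g_M)$ give $-a\|\cdot\|^2\le -R^r_M\le \frac{2\lambda_0}{m}\|\cdot\|^2$ for its sectional curvatures. Lemma \ref{lem:tauR}, applied on $M$ with the scaled metric $Yg_M$, then yields
\[
\tau(YR_M)(u,v)=\tfrac1{2Y}\,(\text{curvature of }g_M)\cdot\|u_1\wedge v_1\|^2_{g_1},
\]
where $u_1,v_1$ are the horizontal components of $u,v$. Hence $\tau(YR_M)(u,v)\le \frac{\lambda_0}{mY}\|u_1\wedge v_1\|^2_{g_1}$.

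\emph{Step 2 (quadratic pieces).} Combining Step 1 with the $\Gamma_1$ term, its coefficient becomes
\[
-2\Big(1+\tfrac{C}{Y^{m+1}}+\tfrac{\lambda_0}{mY}\Big)\tau(\Gamma_1)+\tau(YR_M).
\]
Since $\lambda_0<-m$ we have $C>0$. We use $\tau(g_1\otimes g_1)\ge\frac14\|u_1\wedge v_1\|^2$ (Lemma \ref{lem:taug}) against the bound $\frac{\lambda_0}{mY}\|u_1\wedge v_1\|^2$ from Step 1. The assumption that the sectional curvature is at most $\frac{2\lambda_0}{m}$ is exactly what should make the $\lambda_0/(mY)$ contributions cancel or combine with a good sign, leaving a coefficient $\le -c(1+C/Y^{m+1})\le -c$. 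This is the same mechanism as case (2) of the previous theorem. The terms $\Gamma_3=g_2\otimes g_2$ have coefficients in $[2,\,-(m+1)(\lambda_0+m-2)/(1-\lambda_0)]$ by \eqref{inequ3}, and $\tau(\Gamma_3)\ge 0$ because $g_2$ lives on a complex line. For the mixed term $\tau(g_1\otimes g_2)$ we compute directly from the definition of $\tau$. Writing $Z=Z_1+Z_2$ and $W=W_1+W_2$, we expect
\[
\tau(g_1\otimes g_2+g_2\otimes g_1)\ \ge\ c\big(\|u_1\wedge v_2\|^2+\|u_2\wedge v_1\|^2\big)-\text{cross terms}.
\]
Its coefficient $4(1-mC/Y^{m+1})$ lies in $[\frac{4(m+1)}{1-\lambda_0},4)$ by \eqref{inequ2}.

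\emph{Step 3 (assembling).} All coefficients lie in compact positive intervals by \eqref{inequ1}--\eqref{inequ3}, uniformly in $X\in[0,1)$. It therefore remains to show that the quadratic form
\[
Q(u,v)=a_1\tau(g_1\otimes g_1)+a_2\tau(g_1\otimes g_2+g_2\otimes g_1)+a_3\tau(g_2\otimes g_2)
\]
is comparable to $\|u\wedge v\|^2_{\widetilde g_D}$ for $a_i$ ranging in these intervals. The upper bound is easy, because $\tau(g\otimes g)\le\|u\wedge v\|^2$ and the $a_i$ are bounded. The \textbf{main obstacle} is the lower bound, since the mixed $\tau$-term is not sign-definite on arbitrary real planes. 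A convenient first attempt is to rewrite $Q=\min(a_1,a_2/2,a_3)\,\tau(\widetilde g_D\otimes\widetilde g_D)+(\text{remainder})$, mirroring the step $2A(\Gamma_1+\Gamma_3)\ge A\widetilde g_D^2$ in the previous proof. Then Lemma \ref{lem:taug} gives $\tau(\widetilde g_D\otimes\widetilde g_D)\ge\frac14\|u\wedge v\|^2$, and we check that the remainder terms have the right sign using $\tau(g_i\otimes g_i)\ge0$. If the remainder resists this, we fall back on an explicit parametrization $u=u_1+u_2$, $v=v_1+v_2$, with the fiber component reduced to a single complex direction, and minimize directly. Negative pinching of $\widetilde g_D$ then follows.
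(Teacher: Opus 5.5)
You have the same skeleton as the paper: apply the linear operator $\tau$ to a decomposition of $R_D$, then use Lemma~\ref{lem:taug} and \eqref{inequ1}--\eqref{inequ3}. But the step that carries the proof is left as a ``first attempt'' with a fallback, and that attempt does not work as stated. This is the negative upper bound in the presence of the indefinite mixed term.

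With $b=\min(a_1,a_2/2,a_3)$, the remainder still contains a nonzero multiple of $\tau(\Gamma_2)$ unless the minimum happens to be exactly the value that cancels the mixed term. The bound $\tau(g_i\otimes g_i)\ge0$ gives no control over that multiple.

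The missing idea is to absorb the mixed term exactly. Since $\widetilde g_D^{\,2}=\Gamma_1+2\Gamma_2+\Gamma_3$, Lemma~\ref{cur 0} rewrites as
\[
R_D=-2\Big(\tfrac{(m+1)C}{Y^{m+1}}+\tfrac{\lambda_0}{mY}\Big)\Gamma_1+YR_M-2\Big(1-\tfrac{mC}{Y^{m+1}}\Big)\widetilde g_D^{\,2}-\tfrac{m(m+1)C}{Y^{m+1}}\Gamma_3 ,
\]
which is the paper's second curvature lemma. In this form:
\begin{itemize}
\item the $\widetilde g_D^{\,2}$ term gives strict negativity by Lemma~\ref{lem:taug} and \eqref{inequ2};
\item the $\Gamma_3$ term is non-positive because $C>0$;
\item the hypothesis on $M$ must be spent on the $\Gamma_1$ remainder.
\end{itemize}
That remainder is where the hypothesis is really needed, not your Step~2. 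By \eqref{ode}, the bracket multiplying $\Gamma_1$ in Lemma~\ref{cur 0} is $1+\frac{C}{Y^{m+1}}+\frac{\lambda_0}{mY}=\frac{XY'}{Y^2}\ge0$. By contrast, $\frac{(m+1)C}{Y^{m+1}}+\frac{\lambda_0}{mY}=-\frac{1}{Y(0)}<0$ on the zero section. Also, the base term is converted into a multiple of $\tau(\Gamma_1)$ through the upper bound $\tau(g_1\otimes g_1)\le\|u\wedge v\|^2_{g_1}$, multiplied by the negative curvature bound. The lower bound $\frac14\|u_1\wedge v_1\|^2$ that you quote points the wrong way.

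Second, check the constant. Your normalization $\tau(YR_M)=\frac{k_M}{2Y}\|u_1\wedge v_1\|^2_{g_1}$ is correct. With $k_M\le c_2<0$, the $\Gamma_1$ part of $2\tau(R_D)$ is then at most $-4\big(\frac{(m+1)C}{Y^{m+1}}+\frac1Y(\frac{\lambda_0}{m}-\frac{c_2}{4})\big)\tau(g_1\otimes g_1)$. On the zero section this coefficient equals $\frac{4+c_2}{Y(0)}$. So the sign argument needs $c_2\le-4$ there, and $c_2\le\frac{4\lambda_0}{m}$ suffices everywhere. The value $c_2=\frac{2\lambda_0}{m}$ removes only half of the $\frac{\lambda_0}{mY}$ term, and the sign argument breaks down for $-2m<\lambda_0<-m$.

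The paper reaches $\frac{2\lambda_0}{m}$ only because its display writes $Y(-R^r_M)$ for $\tau(YR_M)$, whereas Lemma~\ref{lem:tauR} gives $\frac Y2(-R^r_M)$.

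What saves the statement is that its hypothesis is very restrictive. In the paper's normalization $g^r_M=\mathrm{Re}\,g_M$, one has $\mathrm{Ric}^r_M=2\lambda_0 g^r_M$, which is a sum of $2n-1$ sectional curvatures. So $k_M\le\frac{2\lambda_0}{n+1}$ forces $n\le2$. The case $n=2$ would force constant nonzero sectional curvature, which is impossible for a K\"ahler surface. Hence $n=1$, $R_M=\lambda_0\,g_M\otimes g_M$, and $k_M\equiv2\lambda_0=\frac{4\lambda_0}{m}$. The display above then collapses to
\[
R_D=-\tfrac{6C}{Y^3}(\Gamma_1+\Gamma_3)-2\big(1-\tfrac{2C}{Y^3}\big)\widetilde g_D^{\,2},
\]
and both pinching bounds follow at once from Lemma~\ref{lem:taug}, \eqref{inequ1} and \eqref{inequ2}.
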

\begin{proof}
For convenience,  we define three tensors as follows:
$$\widehat{g}_{1}=\sum_{i,j=1}^{m}Y(\log X)_{i\bar j}d\eta_{i}d\overline{\eta}_{j}, \
\widehat{g}_{2}=\sum_{i,j=1}^{m}\widetilde{g}_{i\bar j}d\eta_{i}d\overline{\eta}_{j}, \
 \widehat{g}_{3}=\sum_{i,j=1}^{m}\frac{Y'}{X}X_{i}X_{\bar j}d\eta_{i}d\overline{\eta}_{j}.$$
 Then $\widehat{g}_{1}=Yg_{M}$, $\widehat{g}_{2}=\widetilde{g}_{D}$.
 By \eqref{G}, two degenerate metrics $\widehat{g}_{1}$, $\widehat{g}_{3}$ are all dominated by the metric $\widehat{g}_{2}$  on $D$ and we have
\begin{eqnarray*}\label{eq:1}
R_{D}
&=&-2\left(\frac{(m+1)C}{Y^{m+1}}+\frac{\lambda_{0}}{ mY}\right)\widehat{g}_{1}^{2}+YR_{ M}-2\left(1-\frac{mC}{Y^{m+1}}\right)\widehat{g}_{2}^{2}
-\frac{m(m+1)C}{Y^{m+1}}\widehat{g}_{3}^{2}.
 \end{eqnarray*}
Let $g^{r}_{\alpha}$ be the corresponding Riemannian metrics for  $\widehat{g}_{\alpha}$.
If we take any two vectors $\xi=\frac{1}{2}(U-\sqrt{-1}JU), \eta=\frac{1}{2}( V-\sqrt{-1}J V)\in T^{(1,0)}D$,
i.e., $U=\xi+\overline{\xi},  V=\eta+\overline{\eta}$, then $g^{r}_{\alpha}(U, V)=\widehat{g}_{\alpha}(\xi,\eta)$.

By  Lemma \ref{lem:tauR} and Lemma \ref{lem:taug}, the Riemannian sectional tensor is
\begin{eqnarray*}
-R_{D}^{r}(U, V,U, V)&=&\tau (R_{D})(U, V,U, V)\\
&=&-2\left(\frac{(m+1)C}{Y^{m+1}}+\frac{\lambda_{0}}{mY}\right)\tau(\widehat{g}_{1}\otimes\widehat{g}_{1})(U, V,U, V)
+Y(-R_{ M}^{r}(U, V,U, V))\\
&&-2\left(1-\frac{mC}{Y^{m+1}}\right)\tau(\widehat{g}_{2}\otimes\widehat{g}_{2})(U, V,U, V)\\
&&-\frac{m(m+1)C}{Y^{m+1}}\tau(\widehat{g}_{3}\otimes\widehat{g}_{3})(U, V,U, V),
\end{eqnarray*}
where the tensor
$\tau(\widehat{g}_{\alpha}\otimes\widehat{g}_{\alpha})(U, V,U, V)
=\frac{1}{4}\left(||U\wedge  V||^{2}_{g^{r}_{\alpha}}+3g^{r}_{\alpha}(U,J V)^{2}\right)$.
Suppose that  $(M, g_{M})$ is a negatively
  pinched K\"ahler-Einstein manifold, i.e., there exist two negative constants such that
$$c_{1}\leq-\frac{R_{ D}^{r}(U, V,U, V)}{\|U\wedge  V\|^{2}_{g^{r}_{M}}}\leq c_{2}<0.$$ Since $\widehat{g}_{1}=Yg_{M}$, we have
 $$-R_{M}^{r}(U, V,U, V)\leq c_{2}\|U\wedge  V\|^{2}_{g^{r}_{M}}
=\frac{c_{2}}{Y^{2}}\|U\wedge  V\|^{2}_{g^{r}_{1}}
\leq \frac{c_{2}}{4Y^{2}}(||U\wedge  V||^{2}_{g^{r}_{1}}+3g^{r}_{1}(U,J V)^{2}).$$

Hence, we have
\begin{eqnarray*}
-R_{D}^{r}(U, V,U, V)
&\leq&-\frac{1}{2}\left(\frac{(m+1)C}{Y^{m+1}}+\frac{1}{Y}(\frac{\lambda_{0}}{m}-\frac{c_{2}}{2})\right)
\left(||U\wedge  V||^{2}_{g^{r}_{1}}+3g^{r}_{1}(U,J V)^{2}\right)
\\
&&-\frac{1}{2}\left(1-\frac{mC}{Y^{m+1}}\right)
\left(||U\wedge  V||^{2}_{g^{r}_{2}}
+3g^{r}_{2}(U,J V)^{2}\right)\\
&&-\frac{m(m+1)C}{4Y^{m+1}}\left(||U\wedge  V||^{2}_{g^{r}_{3}}+3g^{r}_{3}(U,J V)^{2}\right).
\end{eqnarray*}
If $c_{2}\leq\frac{2\lambda_{0}}{m}$ and $\lambda_{0}\leq-m$, the equations \eqref{inequ1} and  \eqref{inequ2} shows that the coefficients of the  three  tensors    on the right hand  are uniformly bounded from above and below by negative constants.
The second tensor will give a strictly negative contribution to the Riemannian sectional curvature, the remaining two tensors will give non-positive
contributions.
This implies that the  sectional curvature  is bounded above by a negative constant.
The existence of a negative lower bound  for  the  sectional curvature  can be obtained by the same way.
\end{proof}

\section{A classification of the spherical type  sphere bundles }\label{5}
\subsection{Disk bundle}
Let  $\pi:(L, h)\rightarrow M$ be a negative Hermitian line bundle over an $n$-dimensional complex manifold $M$.
Assume  that the curvature of the dual bundle $(L^*, h^{-1})$ induces a K\"ahler metric $g_{M}$ on $M$, whose the K\"ahler form is given by $\omega_{M}=\frac{\sqrt{-1}}{2}\partial \bar\partial\log h$.
 The disk bundle is defined by
 \begin{equation}
 D(L) := \{\zeta  \in L : |\zeta|_{h}^{2} < 1\},
 \end{equation}
where $|\zeta|_{h}$ is the norm of $\zeta$ with respect to the metric $h$.
The following $(1,1)$-form
\begin{equation*}\label{omega D0}
\omega_{D}:=\pi^{*}(\omega_M)-\frac{\sqrt{-1}}{2}\partial\bar\partial \log(1-|\zeta|_{h}^{2})
\end{equation*}
is positive on $D(L^{*})$.
Let $g_{D}$ be the  corresponding metric.
Its curvature tensor was given by us in \cite{Hao2026}.
 Denote by $Ric_{M}$ and $s_{M}$   the Ricci tensor and scalar curvature of $g_{M}$ respectively.


\begin{lemma}\cite{Hao2026}\label{ric}
The  Ricci tensor of $g_{D}$ is
$$Ric_{D}=-(n+2)g_{D}+(n+1)g_{M}+Ric_{M}.$$
\end{lemma}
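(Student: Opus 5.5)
The plan is to compute $\det$ of the metric matrix of $g_D$ explicitly in local bundle coordinates and then apply $R_{i\bar j}=-\partial_i\partial_{\bar j}\log\det$. The Ricci form is a global tensor, so it suffices to verify the identity in each trivialization $\pi^{-1}(U)\cong U\times\mathbb{C}$.

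\textbf{Local potential.} As in Section \ref{3}, I work in coordinates $(z,\zeta)$ on $\pi^{-1}(U)$, where $|\zeta|_h^2=X=|\zeta|^2h(z)$ and $g_{i\bar j}=\partial_i\partial_{\bar j}\log h$. A local K\"ahler potential of $g_D$ is $\Phi=\log h-\log(1-X)$. Differentiating,
\[
\partial\bar\partial\bigl(-\log(1-X)\bigr)=\frac{\partial\bar\partial X}{1-X}+\frac{\partial X\wedge\bar\partial X}{(1-X)^2}.
\]
Since $\log X=\log|\zeta|^2+\log h$, we have $\partial\bar\partial\log X=\pi^*\partial\bar\partial\log h$ away from the zero section. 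This gives $\partial\bar\partial X=X\,\pi^*\partial\bar\partial\log h+X^{-1}\partial X\wedge\bar\partial X$. Substituting, the metric matrix becomes
\[
G_D=\frac{1}{1-X}\,\pi^*g_M+\frac{1}{X(1-X)^2}\,\partial X\otimes\bar\partial X .
\]

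\textbf{Determinant.} In the coframe $\{dz_1,\dots,dz_n,\partial X\}$, $G_D$ is block diagonal. Its blocks are $(1-X)^{-1}(g_{i\bar j})$ and $X^{-1}(1-X)^{-2}$. The change of coframe from $\{dz,d\zeta\}$ has Jacobian $\bar\zeta h$, since $\partial X=\bar\zeta h\,d\zeta+|\zeta|^2\partial h$. Hence
\[
\det G_D=\frac{\det(g_{i\bar j})}{(1-X)^n}\cdot\frac{|\zeta|^2h^2}{X(1-X)^2}=\frac{h\,\det(g_{i\bar j})}{(1-X)^{n+2}}.
\]
This is a smooth positive expression on all of $\pi^{-1}(U)$, including $\zeta=0$. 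So the formula extends across the zero section by continuity, or one can simply check it directly at $\zeta=0$.

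\textbf{Ricci form.} Taking $-\partial\bar\partial\log$ of the determinant gives
\[
\mathrm{Ric}_D=\mathrm{Ric}_M-g_M+(n+2)\,\partial\bar\partial\log(1-X).
\]
From the potential, $\partial\bar\partial\log(1-X)=g_M-g_D$. Substituting yields
\[
\mathrm{Ric}_D=-(n+2)g_D+(n+1)g_M+\mathrm{Ric}_M,
\]
which is the claim.

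\textbf{Main obstacle.} The only delicate points are bookkeeping. First, the determinant computation must use the coframe change correctly; an alternative is the Schur-complement formula $\det\begin{pmatrix}a&b\\ b^*&D\end{pmatrix}$. Second, the conventions $\frac{\sqrt{-1}}{2}\partial\bar\partial$ versus the components $g_{i\bar j}$ must be kept consistent with those used in Section \ref{3}.
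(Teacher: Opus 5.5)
Your proof is correct. The decomposition
$G_D=(1-X)^{-1}\pi^*g_M+X^{-1}(1-X)^{-2}\,\partial X\otimes\bar\partial X$, the determinant $\det G_D=h\det(g_{i\bar j})(1-X)^{-(n+2)}$, and the final substitution $\partial\bar\partial\log(1-X)=g_M-g_D$ all check out. You could also avoid the $X^{-1}$ factor and the continuity argument at $\zeta=0$. Take a Schur complement with respect to the $\zeta\bar\zeta$ entry $h(1-X)^{-2}$: the complement is exactly $(1-X)^{-1}g_{i\bar j}$, which gives the same determinant uniformly on $\pi^{-1}(U)$.

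The paper gives no argument for this lemma; it imports it from \cite{Hao2026}. Within the paper's framework, the natural alternative is to contract the curvature formula of Lemma \ref{h sect} with $g_D^{-1}$ at $p_0$:
\begin{itemize}
\item Read $(g)^2$ as the symmetrized product $\frac12(g_{i\bar j}g_{k\bar l}+g_{i\bar l}g_{k\bar j})$, as the K\"ahler symmetries require. Then the term $-2(g_D)^2$ traces to $-(n+2)g_D$.
\item The $z$-block of $g_D^{-1}$ at $p_0$ is $(1-|\zeta|^2)g_M^{-1}$. So the term $\frac{2}{1-|\zeta|^2}\bigl((g_M)^2+\frac12 R_M\bigr)$ traces to $(n+1)g_M+\mathrm{Ric}_M$.
\end{itemize}

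The two routes trade off differently. Your determinant route is shorter and self-contained; it needs neither the full curvature tensor nor adapted coordinates. The contraction route shows that the Ricci formula is a corollary of, and consistent with, the curvature formula on which the Bochner-tensor argument in Lemma \ref{lem:Boch} relies.
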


\begin{lemma}\cite{Hao2025}
The  scalar curvature of $g_{D}$ is
$$s_{D}=-((n+1)n+s_{M})(|\zeta|_{h}^{2}-1)-(n+2)(n+1).$$
\end{lemma}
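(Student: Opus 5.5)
The plan is to take the trace of the Ricci identity in Lemma \ref{ric} with respect to $g_D$, computed at a well-chosen point. By Lemma \ref{ric},
$$s_D=\mathrm{tr}_{g_D}\mathrm{Ric}_D=-(n+2)\,\mathrm{tr}_{g_D}g_D+(n+1)\,\mathrm{tr}_{g_D}(\pi^*g_M)+\mathrm{tr}_{g_D}(\pi^*\mathrm{Ric}_M).$$
Here $\mathrm{tr}_{g_D}g_D=n+1=\dim D$. This produces the constant term $-(n+2)(n+1)$. It then remains to compute the two partial traces of the pulled-back tensors $g_M$ and $\mathrm{Ric}_M$, which live only on base directions.

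The traces are pointwise and coordinate-independent, so I will work at a point $(z_0,\zeta)$. I use the geodesic coordinates and the frame normalization of Section \ref{4}, so that $h(z_0)=1$, $dh(z_0)=0$ and $(\log h)_{i\bar j}(z_0)=\delta_{ij}$. Put $X=|\zeta|^2h$. Then
$$-\partial\bar\partial\log(1-X)=\frac{\partial\bar\partial X}{1-X}+\frac{\partial X\wedge\bar\partial X}{(1-X)^2}.$$
Using the formulas of Section \ref{4}, at $(z_0,\zeta)$ we have $\partial\bar\partial X=X\,dz\,T_{z_0}\overline{dz}^t+d\zeta d\bar\zeta$ and $\partial X=\bar\zeta d\zeta$. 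Hence the matrix of $g_D$ there is block diagonal:
$$G_D=\begin{pmatrix}\frac{1}{1-X}I_n & 0\\ 0 & \frac{1}{(1-X)^2}\end{pmatrix}.$$
The horizontal block comes from $I_n+\frac{X}{1-X}I_n$. The vertical entry comes from $\frac{1}{1-X}+\frac{X}{(1-X)^2}$.

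The pulled-back tensors $\pi^*g_M$ and $\pi^*\mathrm{Ric}_M$ have only horizontal components. At $(z_0,\zeta)$ these components are $I_n$ and $(R_{i\bar j}(z_0))$ respectively. Therefore only the horizontal block of $G_D^{-1}$, namely $(1-X)I_n$, contributes:
$$\mathrm{tr}_{g_D}\pi^*g_M=n(1-X),\qquad \mathrm{tr}_{g_D}\pi^*\mathrm{Ric}_M=(1-X)\sum_i R_{i\bar i}(z_0)=(1-X)s_M.$$
Substituting gives
$$s_D=-(n+2)(n+1)+\bigl(n(n+1)+s_M\bigr)(1-X),$$
which is the claimed formula with $X=|\zeta|_h^2$.

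The only step needing care is bookkeeping of conventions. The normalization of $s$ as $g^{i\bar j}R_{i\bar j}$ must match the one used for Lemma \ref{ric} in \cite{Hao2026}, and the factor $\frac{\sqrt{-1}}{2}$ in the Kähler forms must match as well. Both are applied consistently on $D$ and $M$. With that fixed, the computation above is routine. Since $X$ and $s_M$ are globally defined, the pointwise identity holds everywhere on $D(L)$.
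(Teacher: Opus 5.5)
Your proposal is correct. The paper gives no proof of this lemma and simply quotes it from \cite{Hao2025}, so there is no in-paper argument to compare against.

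Your derivation is the natural one: trace the identity of Lemma \ref{ric} with respect to $g_D$ at a point where $g_D$ is block diagonal, with horizontal block $(1-X)^{-1}I_n$ and vertical entry $(1-X)^{-2}$. This is exactly the normal form recorded in Lemma \ref{h sect}. The traces $n+1$, $n(1-X)$ and $(1-X)s_M$ are right. The complex-trace normalization $s=g^{i\bar j}R_{i\bar j}$ is the one forced by the constants in the statement.

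If you want the argument independent of \cite{Hao2026}, the same local computation gives $\det g_D = h\det(g_M)/(1-X)^{n+2}$. Applying $-\partial\bar\partial\log$ to this, together with $-\partial\bar\partial\log(1-X)=g_D-\pi^*g_M$, yields Lemma \ref{ric} in one line.
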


\begin{lemma}\cite{Hao2026}\label{h sect}
For any  fixed point $p_{0}\in D(L)$,  there exists a local coordinate system $(z, \zeta)$ around it
such that
$$g_{M}|_{p'_{0}}=\sum^{n}_{j,k=1} \delta_{j\bar k}dz_{j}\otimes\overline{dz}_{k}, \ \
g_{D}|_{p_{0}}=\frac{1}{1-|\zeta|^{2}}\left(\frac{d\zeta \otimes\overline{d\zeta}}{1-|\zeta|^{2}}+g_{M}|_{p'_{0}}\right),$$
where $p'_{0}$ is the project point of $p_{0}$ under the dual mapping $\pi^{*}$ from $L$ to $M$. At the point $p_{0}$,
the  curvature tensor of $g_{D}$  is
$$R_{D}=-2\left((g_{D})^{2}|_{p_{0}}-\frac{1}{1-|\zeta|^{2}}\left((g_{M})^{2}|_{p'_{0}}+\frac{1}{2}R_{M}
\right)\right).$$
In particular, the  holomorphic sectional curvature $ H(g_{D},p_{0})=-4$  if and only if $ H(g_{M},p'_{0})=-4$.
\end{lemma}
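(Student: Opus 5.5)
The plan is to reduce the statement to the block computation already done in Lemma \ref{cur 0}, by writing $g_D$ in the same form \eqref{G} with a different, explicit coefficient function $Y$.

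\textbf{Coordinates and the metric at $p_0$.} Fix $p_0=(z_0,\zeta)$ and take the geodesic coordinates $(U,z)$ around $p'_0$ and the frame $e_U$ of Section~4.1, so that $h=e^{\psi}$ with $h(z_0)=1$, $dh(z_0)=0$ and $h_{j\bar k}(z_0)=\delta_{j\bar k}$. Then $X=|\zeta|^2h$, and the values of the derivatives of $X$ at $(z_0,\zeta)$ listed there apply verbatim.

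A K\"ahler potential of $g_D$ is $\log h-\log(1-X)$. Since $\log h=\log X-\log\zeta-\log\bar\zeta$, one has
$$\partial\bigl(\log h-\log(1-X)\bigr)=\frac{\partial \log X}{1-X}+(\text{holomorphic}).$$
Hence $(g_D)_{i\bar j}=\bigl(Y X_i/X\bigr)_{\bar j}$ with
$$Y=\frac{1}{1-X},\qquad Y'=\frac{1}{(1-X)^2}.$$
In other words $g_D$ has exactly the shape \eqref{G} with this $Y$.

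Evaluating at $p_0$ with $X_m=\bar\zeta$ and $X_{i\bar j}=|\zeta|^2\delta_{i\bar j}$ gives
$$g_D|_{p_0}=Y' d\zeta\, d\bar\zeta+Y g_M|_{p'_0}.$$
This is the claimed expression $\frac{1}{1-|\zeta|^2}\bigl(\frac{d\zeta\, d\bar\zeta}{1-|\zeta|^2}+g_M\bigr)$.

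\textbf{The curvature tensor.} In the proof of Lemma \ref{cur 0}, the formulas for $R_{11},R_{12},R_{22}$ were derived only from the shape \eqref{G}; the ODE \eqref{ode} entered only afterwards. So I reuse those block formulas with $Y=(1-X)^{-1}$ and recompute the two scalar coefficients directly:
$$\frac{XY'^2}{Y}-XY''-Y'=-YY',\qquad \frac{XY''^2}{Y'}-XY'''-Y''=-2Y'^2.$$
Assembling the blocks, including the two $-XY'$ terms in $R_{22}$, gives
$$R_D=-2Y^{2}\Bigl(1-\tfrac{XY'}{Y^{2}}\Bigr)g_M^2-4YY'g_M|d\zeta|^2-2Y'^2|d\zeta|^4+YR_M .$$
Since $Y^2-XY'=Y$, completing the square yields
$$R_D=-2\bigl(Y g_M+Y'|d\zeta|^2\bigr)^2+2Yg_M^2+YR_M=-2\Bigl(g_D^2-\frac{1}{1-|\zeta|^2}\bigl(g_M^2+\tfrac12R_M\bigr)\Bigr),$$
which is the stated formula.

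\textbf{Holomorphic sectional curvature.} Take $Z=a\,\partial_\zeta+v\in T_{p_0}D$, with $v$ the base component. Then
$$R_D(Z)=-2g_D(Z)^2+Y\bigl(2|v|^4+R_M(v)\bigr).$$
So $H(g_D,p_0,Z)=-4$ for every $Z$ exactly when $R_M(v)=-2|v|^4$ for every $v$, that is, when $H(g_M,p'_0)\equiv-4$. The "if" direction is immediate from this identity, and for "only if" one takes $a=0$.

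\textbf{Main obstacle.} The delicate step is justifying that the block formulas of Lemma \ref{cur 0} are valid before the ODE is used. In particular, the contributions $\partial T\,T^{-1}\overline{\partial T}^t$ vanish at $z_0$ and combine into $YR_M$, and the two $XY'$ terms in $R_{22}$ each contribute a copy of $g_M^2$ rather than cancelling. I would re-derive $R_{22}$ carefully to confirm the total coefficient $-2XY'$.
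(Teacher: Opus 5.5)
\textbf{Verdict.} Your argument is correct, apart from one wrong intermediate coefficient noted below. The paper does not prove this lemma itself; it cites \cite{Hao2026}. Your route is exactly the paper's machinery: you recognise $g_D$ as the ansatz \eqref{G} with $Y=(1-X)^{-1}$, and you reuse the block formulas from the proof of Lemma~\ref{cur 0}, which the paper applies there to $\widetilde g_D$.

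\textbf{The block formulas do not use the ODE.} For a general $Y$, the curvature at $(z_0,\zeta)$ is
\[
R_D=\Bigl(\tfrac{XY''^2}{Y'}-XY'''-Y''\Bigr)|d\zeta|^4+4\Bigl(\tfrac{XY'^2}{Y}-XY''-Y'\Bigr)g_M|d\zeta|^2-2XY'g_M^2+YR_M ,
\]
with no use of \eqref{ode}. Your two scalar evaluations, $-2Y'^2$ and $-YY'$, are then correct.

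\textbf{The slip.} The coefficient of $g_M^2$ in your displayed intermediate formula is wrong. You wrote $-2Y^2\bigl(1-\tfrac{XY'}{Y^2}\bigr)$, which equals $-2(Y^2-XY')=-2Y$. The correct coefficient is $-2XY'$, as you yourself state in your last paragraph. Using $Y^2-XY'=Y$, this equals $-2Y^2+2Y$. It is this value, not $-2Y$, that makes completing the square give $-2g_D^2+2Yg_M^2+YR_M$. With the coefficient you displayed, the same step would instead leave $+2XY'g_M^2$.

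\textbf{The zero section.} The identity $\log h=\log X-\log\zeta-\log\bar\zeta$ needs $\zeta\neq0$. On the zero section, the shape \eqref{G} and the curvature formula follow by continuity.

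\textbf{Holomorphic sectional curvature.} Your final step is fine. Since $H(g_D,p_0,v)=-4+\bigl(4+H(g_M,p'_0,v)\bigr)/Y$ for base vectors $v$, the ``only if'' direction follows.
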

The Bochner tensor for a K\"ahler manifold is defined as follows:
\begin{eqnarray*}
                           B (\alpha ,\beta,\mu,\nu)
&=&R (\alpha ,\beta,\mu,\nu)-\frac{1}{n+2}\left[Ric (\alpha ,\mu)g (\beta,\nu)\right.\\
&&+Ric (\beta,\mu)g (\alpha ,\nu)+\left.Ric (\alpha ,\nu)g (\beta,\mu)+Ric (\beta,\nu)g (\alpha ,\mu)\right]\\
&&+\frac{S }{(n+1)(n+2)}[g (\alpha ,\mu)g (\beta,\nu)+g (\beta,\mu)g(\alpha ,\nu)],
\end{eqnarray*}
where  $R$ is the curvature tensor, $Ric$ is the Ricci curvature tensor, $S$ is the scalar curvature.
A Bochner-K\"ahler  manifold is a K\"ahler manifold with vanishing Bochner tensor.
Substituting the above curvature tensors into the formula for Bochner tensor, we obtain the following result.
\begin{lemma}\label{lem:Boch}
The  Bochner tensor of $g_{D}$ is vanish if and only if
$ H(g_{D})=-4$.
\end{lemma}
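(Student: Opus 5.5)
We work at a fixed point $p_0$ in the coordinates of Lemma \ref{h sect}, where $g_M|_{p'_0}$ is the identity and $g_D|_{p_0}=\frac{1}{1-|\zeta|^2}\bigl(\frac{d\zeta\otimes\overline{d\zeta}}{1-|\zeta|^2}+g_M\bigr)$. Write $t=1-|\zeta|^2$. The plan is to substitute the three ingredients into the Bochner tensor formula with complex dimension $n+1$ (so the constants are $\frac{1}{n+3}$ and $\frac{1}{(n+2)(n+3)}$):
\begin{itemize}
\item the curvature $R_D=-2g_D^2+\frac{2}{t}g_M^2+\frac1t R_M$ from Lemma \ref{h sect};
\item the Ricci tensor $Ric_D=-(n+2)g_D+(n+1)g_M+Ric_M$ from Lemma \ref{ric};
\item the scalar curvature $s_D=t\bigl((n+1)n+s_M\bigr)-(n+2)(n+1)$.
\end{itemize}

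First I check the constant-curvature parts. The piece $-(n+2)g_D$ of $Ric_D$ and the piece $-(n+2)(n+1)$ of $s_D$ are exactly the Ricci and scalar curvature of the model term $-2g_D^2$. So these contributions cancel against $-2g_D^2$ in $B_D$.

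What remains is
\[
B_D=\frac1t\bigl(2g_M^2+R_M\bigr)-\frac{1}{n+3}\,\mathcal{K}\bigl((n+1)g_M+Ric_M,\ g_D\bigr)+\frac{t\bigl(n(n+1)+s_M\bigr)}{(n+2)(n+3)}\,\mathcal{K}(g_D,g_D),
\]
where $\mathcal{K}(A,g)$ denotes the symmetrized Kulkarni--Nomizu-type product appearing in the definition of the Bochner tensor. Expand $g_D=\frac{1}{t^2}\,d\zeta\,\overline{d\zeta}+\frac1t g_M$, and evaluate $B_D$ on the components $(\partial_\zeta,\partial_\zeta,\partial_\zeta,\partial_\zeta)$, $(\partial_\zeta,\partial_\zeta,\partial_i,\partial_{\bar j})$ and the purely horizontal ones.

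\textbf{Fiber direction.} Since $g_M$, $R_M$ and $Ric_M$ vanish on $\partial_\zeta$, $B_D(\partial_\zeta,\ldots)$ reduces to the scalar term and is proportional to $n(n+1)+s_M$. So $B_D=0$ forces $s_M=-n(n+1)$.

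\textbf{Mixed direction.} This gives a relation proportional to $(n+1)g_M+Ric_M$ minus a multiple of $\bigl(n(n+1)+s_M\bigr)g_M$. Once $s_M=-n(n+1)$, it forces $Ric_M=-(n+1)g_M$.

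\textbf{Horizontal direction.} With these two relations, the horizontal part gives $R_M=-2g_M^2$, i.e.\ $g_M$ has constant holomorphic sectional curvature $-4$.

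Conversely, if $R_M=-2g_M^2$, then $Ric_M=-(n+1)g_M$ and $s_M=-n(n+1)$. Then $R_D=-2g_D^2$ and all correction terms vanish, so $B_D=0$.

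Finally, by Lemma \ref{h sect}, the condition $R_M=-2g_M^2$ is equivalent to $R_D=-2g_D^2$, i.e.\ $H(g_D)=-4$. Conversely, $H(g_D)=-4$ gives constant holomorphic sectional curvature, so the Bochner tensor vanishes. Equivalently, for that direction one can simply note that constant holomorphic sectional curvature metrics are Bochner flat.

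The main obstacle is the bookkeeping in the mixed and horizontal components. The powers of $t$ differ between the $d\zeta\,\overline{d\zeta}$ and $g_M$ parts of $g_D$, and one must verify that the coefficients assemble into exactly the combinations above. I would organize the computation by writing $g_D=a\,\theta+b\,g_M$ with $\theta=d\zeta\,\overline{d\zeta}$, $a=t^{-2}$, $b=t^{-1}$, and expanding $\mathcal{K}$ bilinearly in these pieces.
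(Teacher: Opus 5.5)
Your proposal is correct and follows essentially the same route as the paper. Both substitute $R_D$, $Ric_D$ and $s_D$ into the Bochner tensor at the adapted point of Lemma \ref{h sect}, with the constant-curvature model terms cancelling. Both then read off the fiber, mixed and horizontal parts to obtain successively $s_M=-n(n+1)$, $Ric_M=-(n+1)g_M$ and $H(g_M)=-4$, and transfer this to $H(g_D)=-4$ by Lemma \ref{h sect}. The only difference is cosmetic: the paper extracts the $|d\zeta|^4$ and $|d\zeta|^2$ coefficients of the quartic form $B(\alpha,\bar\alpha,\alpha,\bar\alpha)$, while you evaluate the corresponding components.
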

\begin{proof}
Sufficiency is trivial. We now prove necessity.
For any  fixed point $p_{0}\in D(L)$,  we take the local coordinate system $(z, \zeta)$ given by Lemma \ref{h sect}.
At the point $p_{0}$,
the Bochner tensor of $g_{D}$  for $(\alpha, \bar{\alpha}, \alpha, \bar{\alpha})\in T_{(1,0)}D\times T_{(0,1)}D\times T_{(1,0)}D\times T_{(0,1)}D$ is  as follows:
\begin{eqnarray*}
                           B (\alpha, \bar{\alpha}, \alpha, \bar{\alpha})
&=&-2\left((g_{D}(\alpha,\bar{\alpha}))^{2}-\frac{1}{1-|\zeta|^{2}}\left((g_{M}(\alpha, \bar{\alpha}))^{2}+\frac{1}{2}R_{M}(\alpha,\bar{\alpha}, \alpha,\bar{\alpha})
\right)\right)\\
&&-\frac{4}{n+3}\left(-(n+2)g_{D}(\alpha, \bar{\alpha})+(n+1)g_{M}(\alpha,\bar{\alpha})+Ric_{M}(\alpha,\bar{\alpha})\right) g_{D}(\alpha,\bar{\alpha})\\
&&-\frac{2 }{(n+2)(n+3)}\left(((n+1)n+s_{M})(|v|_{h}^{2}-1)+(n+2)(n+1)\right)(g_{D}(\alpha,\bar{\alpha}))^{2}\\
&=&\frac{2}{1-|\zeta|^{2}}\left((g_{M}(\alpha, \bar{\alpha}))^{2}+\frac{1}{2}R_{M}(\alpha, \bar{\alpha}, \alpha,\bar{\alpha})
\right)\\
&&-\frac{4}{n+3}\left((n+1)g_{M}(\alpha, \bar{\alpha})+Ric_{M}(\alpha, \bar{\alpha})\right) g_{D}(\alpha, \bar{\alpha})\\
&&-\frac{2}{(n+2)(n+3)}((n+1)n+s_{M})(|v|^{2}_{h}-1)(g_{D}(\alpha, \bar{\alpha}))^{2}.
\end{eqnarray*}
If the Bochner tensor of $g_{D}$ is vanish, then the coefficient of the tensor $|d\zeta|^{4}$ must vanish.
From
the formula of $g_{D}$ at $p_{0}$ in the local coordinate given by Lemma \ref{h sect},  we can get
$$\frac{2((n+1)n+s_{M})}{(n+2)(n+3)(1-|\zeta|^{2})^{3}}=0.$$
This implies that the scalar curvature $s_{M}=-(n+1)n$.
By the same way, the coefficient of the tensor $|d\zeta|^{2}$  also vanishes, i.e., $(n+1)g_{M}(\alpha, \bar{\alpha})+Ric_{M}(\alpha, \bar{\alpha})=0$.
This implies that the Ricci curvature of $g_{M}$ is  $-(n+1)$.
Finally, we get $$(g_{M}(\alpha, \bar{\alpha}))^{2}+\frac{1}{2}R_{M}(\alpha, \bar{\alpha}, \alpha,\bar{\alpha})=0.$$
This implies that the holomorphic sectional curvature  $ H(g_{M},p'_{0})=-4$.
By using Lemma \ref{h sect} again, we have $ H(g_{D},p_{0})=-4$.
\end{proof}
In \cite{Hao2026},  we had proved that  $(D(L),  g_{D})$  is a complete  K\"ahler manifold if
the K\"ahler manifold $(M, g_{M})$  is complete.   This implies the following result.
\begin{corollary}
Let $(M, g_{M})$ be a complete K\"ahler manifold. Then  $(D(L), g_{D})$ is a simply-connected  Bochner-K\"ahler manfold if and only if it is the complex hyperbolic space.
\end{corollary}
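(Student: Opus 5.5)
The proof will go in two directions. The ``if'' direction is immediate: complex hyperbolic space $\mathbb{B}^{n+1}$ with its Bergman-type metric of constant holomorphic sectional curvature $-4$ is simply connected, and its Bochner tensor vanishes. The latter holds because constant holomorphic sectional curvature $c$ gives $R=\frac{c}{2}(g\otimes g)_{\mathrm{sym}}$, which satisfies the Einstein relation, and so the Bochner tensor vanishes by direct substitution.

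For the ``only if'' direction, suppose $(D(L),g_D)$ is simply connected and Bochner--K\"ahler.

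\textbf{Step 1.} By Lemma \ref{lem:Boch}, vanishing of the Bochner tensor gives $H(g_D)\equiv -4$ at every point, so $g_D$ has constant holomorphic sectional curvature $-4$. As intermediate information, the proof of Lemma \ref{lem:Boch} also shows that $g_M$ has Ricci curvature $-(n+1)$ and $H(g_M)\equiv-4$. That fact is not needed for this corollary, but it will be used later.

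\textbf{Step 2.} Use completeness. The cited result from \cite{Hao2026} says $(D(L),g_D)$ is complete whenever $(M,g_M)$ is complete, and this holds by hypothesis.

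\textbf{Step 3.} Apply the classical uniformization theorem for complex space forms (Hawley--Igusa). A complete, simply connected K\"ahler manifold of complex dimension $n+1$ with constant holomorphic sectional curvature $-4$ is holomorphically isometric to the complex hyperbolic space $\mathbb{B}^{n+1}$ with its standard metric. This identifies $(D(L),g_D)$ with complex hyperbolic space.

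The main subtlety is making sure the hypotheses of the space-form theorem are genuinely met. Completeness comes from \cite{Hao2026} rather than from this paper, so I would state precisely where it is invoked. Simple connectivity is an assumption on $D(L)$; I note, though, that $D(L)$ deformation retracts onto the zero section $M$, so this is equivalent to $M$ being simply connected. I would also check the normalization: Lemma \ref{h sect} uses $H=2R(Z,Z,Z,Z)/g(Z,Z)^2$, so the statement ``$H=-4$'' must be matched with the corresponding convention for the model metric on the ball. The rest is a direct citation.
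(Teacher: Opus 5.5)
Your proposal is correct and follows the paper's route. The paper likewise derives the corollary directly from Lemma \ref{lem:Boch} (vanishing Bochner tensor forces $H(g_D)\equiv -4$) and the completeness result from \cite{Hao2026}, leaving the uniformization theorem for complete simply-connected complex space forms implicit; your Hawley--Igusa step and the normalization check simply make that implicit step explicit.
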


Define $S(L)=\{\zeta \in L : \|\zeta\|_{h}^{2} = 1\}.$  It is called a circle bundle.
Recall that a CR hypersurface  is called spherical if, for every $p$ in it, there is a neighborhood of $p$ in the CR hypersurface that is CR- diffeomorphic to an open piece of the sphere.
The following lemma is characterization of the spherical type  circle bundles.
\begin{lemma}\cite{Ebenfelt2023}\label{lem:E}
Let $(M, g)$ be a K\"ahler manifold of complex dimension $n$. Assume that  $(M, g)$ has constant Ricci eigenvalues. Then $S(L)$ is spherical if and only if
 $(M, g)$ is locally holomorphically isometric to one of the following:
\begin{enumerate}
\item $(\mathbb{B}_{n}, \lambda_{0} g_{-1})$ for some $\lambda_{0}\in R^{+}$,
\item $(\mathbb{CP}^{n}, \lambda_{0} g_{1})$ for some $\lambda_{0}\in R^{+}$,
\item $(\mathbb{C}_{n}, g_{0})$,
\item $(\mathbb{B}^{l} \times \mathbb{CP}^{n-l}, \lambda_{0} g_{-1}\times \lambda_{0} g_{1})$ for some $1 \leq l \leq n - 1$ and some $\lambda_{0}\in R^{+}$.
\end{enumerate}
where $g_{-\alpha}$ denotes the standard metric of the complex space form with Ricci curvature $-\alpha$.
\end{lemma}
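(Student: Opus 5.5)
The plan is to translate sphericity of $S(L)$ into a curvature condition on $(M,g)$ and then classify. First I will use Webster's computation \cite{Webster1977} for circle bundles. Take the pseudohermitian structure $\theta=-\frac{\sqrt{-1}}{2}(\partial-\bar\partial)\log|\zeta|_{h}^{2}$ restricted to $S(L)$. Its Levi form is $\pi^{*}\omega_{M}$, because $\partial\bar\partial\log h$ induces $g$. The Webster connection of $\theta$ projects to the Levi-Civita connection of $g$. Consequently the Chern--Moser tensor of $S(L)$, which is the traceless part of the Webster curvature, is the pull-back of the Bochner tensor $B$ of $g$, defined by the formula stated above. Since a Levi-nondegenerate hypersurface of dimension $\geq 5$ is spherical exactly when its Chern--Moser tensor vanishes, this gives: $S(L)$ is spherical if and only if $(M,g)$ is Bochner--K\"ahler. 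For $n=1$ the Cartan tensor is used instead, and the Bochner tensor is replaced by the corresponding condition on $\Delta s$. In that case constant Ricci eigenvalue means constant Gauss curvature, so the list is immediate. The ``if'' direction then follows by checking that each model in the list has vanishing Bochner tensor. For space forms this is classical, and for $\mathbb{B}^{l}\times\mathbb{CP}^{n-l}$ with opposite curvatures of equal magnitude it is Tachibana's observation.

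For the ``only if'' direction, assume $B\equiv 0$ and that the Ricci eigenvalues are constant. With $B=0$, the full curvature tensor $R$ is an explicit algebraic expression in $\mathrm{Ric}$, $s$ and $g$. Substituting this into the contracted second Bianchi identity shows that the tensor $\mathrm{Ric}-\frac{s}{2(n+1)}g$ satisfies a Codazzi-type equation. Since the eigenvalues are constant, $s$ is constant, so $\mathrm{Ric}$ itself is a Codazzi tensor with constant eigenvalues. I then want to show that $\nabla\mathrm{Ric}=0$. Let $E_{1},\dots,E_{k}$ be the eigen-distributions, which are $J$-invariant. The Codazzi property together with constant eigenvalues shows that each $E_{a}$ is integrable and totally geodesic. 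The K\"ahler symmetry of $\nabla\mathrm{Ric}$ in its $(1,0)$ and $(0,1)$ slots then kills the remaining mixed components. Hence $\mathrm{Ric}$ is parallel and the $E_{a}$ are parallel.

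By the local de Rham decomposition, $M$ is then locally a K\"ahler product $M_{1}\times\cdots\times M_{k}$ with each $M_{a}$ K\"ahler--Einstein. Restricting $B=0$ to directions tangent to a single factor shows that each factor has constant holomorphic sectional curvature $c_{a}$. Evaluating $B$ on mixed directions $X\in TM_{a}$, $Y\in TM_{b}$ with $a\neq b$, where $R(X,\bar X,Y,\bar Y)=0$, forces two constraints. There are at most two factors, and if there are two then $c_{1}=-c_{2}$, so the Ricci constants are opposite. A flat factor can occur only when $M$ itself is flat. This leaves exactly the four models of Lemma \ref{lem:E}, with the scaling parameter $\lambda_{0}$ absorbed into the curvature normalization.

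The main obstacle is the first step, identifying the Chern--Moser tensor with the Bochner tensor. I would try first to quote Webster directly rather than redo the computation. The second delicate point is the passage from ``Codazzi with constant eigenvalues'' to ``parallel'' in the K\"ahler setting. Here I would lean on the known structure theory of Bochner--K\"ahler metrics due to Bryant, in case the direct argument becomes messy.
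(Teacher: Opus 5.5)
The paper gives no proof of this lemma; it only quotes Ebenfelt--Xiao--Xu. Your route is the natural one and it goes through: Webster's identification of the Chern--Moser tensor of $S(L)$ with the pull-back of the Bochner tensor (the same input the paper uses in Theorem~\ref{thm s}), followed by a classification of Bochner--K\"ahler metrics with constant scalar curvature. Three points need tightening.

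\emph{The step to parallel Ricci.} Make the ``Codazzi-type'' step explicit; it then finishes the job on its own. Contract $\nabla_m R_{i\bar j k\bar l}=\nabla_i R_{m\bar j k\bar l}$ with $g^{m\bar j}$ and insert the expression for $R$ forced by $B=0$ (with $s=g^{i\bar j}R_{i\bar j}$). This gives
\[
\nabla_i R_{k\bar l}=\frac{1}{n+1}\bigl(s_i\, g_{k\bar l}+s_k\, g_{i\bar l}\bigr),
\]
so constant $s$ yields $\nabla\mathrm{Ric}=0$ immediately. The eigendistribution argument and the appeal to Bryant are unnecessary. Indeed, any $J$-invariant $P$ satisfying the real Codazzi equation on a K\"ahler manifold is parallel, since $(\nabla_{\bar l}P)(\partial_k,\partial_{\bar j})=(\nabla_k P)(\partial_{\bar l},\partial_{\bar j})=0$.

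Make sure you use this identity and not the symmetry $\nabla_iR_{k\bar l}=\nabla_kR_{i\bar l}$. That symmetry holds for every K\"ahler metric because the Ricci form is closed. Constant Ricci eigenvalues alone do not force $\nabla\mathrm{Ric}=0$; there are local examples of Apostolov--Dr\u{a}ghici--Moroianu. So Bochner-flatness has to enter exactly here.

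\emph{The two-factor case.} The mixed condition $B(X,\bar X,Y,\bar Y)=0$ reads $(n_2+1)\rho_1+(n_1+1)\rho_2=0$, with $\rho_a=\frac{(n_a+1)c_a}{2}$. This means $c_1=-c_2$ for the holomorphic sectional curvatures. The Ricci constants $\frac{(l+1)c}{2}$ and $-\frac{(n-l+1)c}{2}$ are opposite only when $l=n-l$, so your sentence ``the Ricci constants are opposite'' is wrong.

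Consequently, what you prove matches item (4) only when $g_{\pm1}$ are normalized by holomorphic sectional curvature, as with the standard Bergman and Fubini--Study potentials in Ebenfelt--Xiao--Xu. Under the normalization literally written in the statement (Ricci curvature $\mp1$), $\mathbb{B}^l\times\mathbb{CP}^{n-l}$ with $l\neq n-l$ is not Bochner-flat, and the ``if'' direction of (4) would be false.

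\emph{The case $n=1$.} The pseudohermitian structure on $S(L)$ is torsion-free. The Cartan tensor is then a multiple of the $(2,0)$-part of the Hessian of the curvature, not a condition on $\Delta s$. This is harmless here because the curvature is constant.
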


Recall a result given by Huang and Li \cite{HuangL2020}  that: for any pseudoconvex domain with a strictly pseudoconvex boundary point, if  the Bergman metric  is K\"ahler-Einstein,  then the boundary near the point is locally spherical.
Combining this conclusion with Lemma \ref{lem:E} and their Proposition 1.9 in \cite{Ebenfelt2023}, we directly observe the following rigidity phenomenon. Because we merely put together their results, there is no doubt that this result belongs to them.

\begin{corollary}\cite{Ebenfelt2023,HuangL2020}
Let $\Omega$ be a simply-connected bounded pseudoconvex  domain in $\mathbb{C}^{n}$.
 Let $$D(\Omega)=
\left\{(z,\zeta)\in \Omega\times \mathbb{C}:|\zeta|^{2}<h^{-1}(z)\right\}
$$ be a bounded pseudoconvex  Hartogs domain over  $\Omega$.
If  $\sqrt{-1}\partial \overline{\partial}\log h$  induces  a complete K\"ahler metric with constant Ricci eigenvalues,
then the Bergman metric of $D(\Omega)$ is K\"ahler-Einstein if and only if $D(\Omega)$  is biholomorphically equivalent to
$\mathbb{B}^{n+1}$.
\end{corollary}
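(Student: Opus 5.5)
The trivial direction comes first. If $D(\Omega)$ is biholomorphic to $\mathbb{B}^{n+1}$, then its Bergman metric is the pullback of the Bergman metric of the ball, since the Bergman metric is invariant under biholomorphisms. The ball's Bergman metric is K\"ahler--Einstein, and so is its pullback.

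For the converse, assume the Bergman metric of $D(\Omega)$ is K\"ahler--Einstein. The goal is to show that the circle bundle part of the boundary,
\[
S=\{(z,\zeta): z\in\Omega,\ |\zeta|^{2}h(z)=1\},
\]
is spherical, and then use the classification of Lemma \ref{lem:E}.

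First I check strict pseudoconvexity. At points of $S$ over $\Omega$, the defining function $\rho=\log(|\zeta|^{2}h)$ has complex Hessian $\partial\bar\partial\log h+\partial\bar\partial\log|\zeta|^{2}=\partial\bar\partial\log h$. This is positive in the $z$-directions. The complex tangent space of $S$ projects isomorphically onto $T^{1,0}\Omega$, because $\partial\rho=\partial\log h+d\zeta/\zeta$ forces the $\zeta$-component to be determined by the $z$-component. So the Levi form restricted to the complex tangent space is positive definite, and every point of $S$ is a strictly pseudoconvex boundary point of the pseudoconvex domain $D(\Omega)$. The result of Huang and Li \cite{HuangL2020} then says $\partial D(\Omega)$ is locally spherical near each such point. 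Since $S$ is exactly the unit circle bundle $S(L)$ of the trivial line bundle with fiber metric $h$, the circle bundle $S(L)$ is spherical.

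Next I apply Lemma \ref{lem:E}, using that $g_\Omega$ (the metric induced by $\sqrt{-1}\partial\bar\partial\log h$) has constant Ricci eigenvalues. It follows that $(\Omega,g_\Omega)$ is locally holomorphically isometric to one of the four model cases. Each case is ruled out except the ball:
\begin{itemize}
\item $g_\Omega$ is complete on a bounded domain $\Omega\subset\mathbb{C}^n$, and bounded domains carry nonconstant bounded holomorphic functions.
\item Completeness together with local isometry makes the universal cover of $\Omega$ a complete space form, or a product of such.
\item A $\mathbb{CP}^{n-l}$ factor would be compact and would give a compact complex submanifold of the Stein domain $\Omega$, which is impossible.
\item The flat $\mathbb{C}^n$ case would make $\Omega$ biholomorphic to $\mathbb{C}^n$, contradicting boundedness (Liouville).
\end{itemize}
Since $\Omega$ is simply connected, the developing map is then a global holomorphic isometry $F:(\Omega,g_\Omega)\to(\mathbb{B}^n,\lambda_0 g_{-1})$. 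I expect this global step to be the main obstacle: passing from local isometry to a global biholomorphism needs completeness plus simple connectivity, via a Cartan--Hadamard / developing-map argument. This is where I would invoke Proposition 1.9 of \cite{Ebenfelt2023}.

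Finally, I lift $F$ to the Hartogs domain. Since $F$ is an isometry, $\partial\bar\partial\log h=\lambda_0\,\partial\bar\partial\log\bigl((1-\|F\|^{2})^{-(n+1)}\bigr)$, up to normalization. Hence the function $\log h+\lambda_0(n+1)\log(1-\|F\|^{2})$ is pluriharmonic, and because $\Omega$ is simply connected it equals $\log|e^{u}|^{2}$ for some holomorphic $u$. The defining inequality of the domain becomes
\[
|\zeta e^{u}|^{2}<(1-\|F\|^{2})^{\mu},\qquad \mu=\lambda_0(n+1).
\]
The map $(z,\zeta)\mapsto(F(z),\zeta e^{u(z)})$ therefore gives a biholomorphism of $D(\Omega)$ onto the Thullen-type domain $\{(w,\xi):|\xi|^{2}<(1-\|w\|^{2})^{\mu}\}$. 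For this model domain, the Bergman--Einstein condition forces $\mu=1$, i.e.\ the domain is the ball. This follows from Cho \cite{Cho2021} and Ebenfelt--Xiao--Xu, Proposition 1.10 of \cite{Ebenfelt2024}, for $D_{p,1}$ with $p=1/\mu$. Alternatively, sphericity of $S$ directly pins down $\lambda_0$. Either way we get $D(\Omega)\cong\mathbb{B}^{n+1}$.
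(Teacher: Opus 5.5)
Your proposal is correct and follows essentially the same route as the paper: Huang--Li gives sphericity of the circle-bundle part of the boundary, Lemma~\ref{lem:E} together with completeness and simple connectivity forces $(\Omega,g_\Omega)$ to be a scaled ball, the isometry lifts to a biholomorphism onto a Thullen-type domain, and a Bergman--Einstein rigidity result for that domain forces the exponent to be $1$. Two cautions: the paper invokes Proposition 1.9 of \cite{Ebenfelt2023} for that final rigidity step, not for the globalization step (which it handles via de Rham and \cite{Lu1966}); and you should drop your closing ``alternative,'' since by case (1) of Lemma~\ref{lem:E} the circle bundle over $(\mathbb{B}_n,\lambda_0 g_{-1})$ is spherical for every $\lambda_0>0$, so sphericity alone cannot pin down $\lambda_0$ and the Bergman--Einstein rigidity for $D_{p,1}$ is genuinely needed.
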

\begin{proof} It suffices  to prove the necessity. Huang and Li's result shows that
the strictly pseudoconvex boundary $\partial_{s}(D(\Omega)):=\{(z,\zeta)\in \Omega\times \mathbb{C}:|\zeta|^{2}=h^{-1}(z)\}$ is locally spherical.
Denote by $g_{\Omega}$  the complete  K\"ahler metric with K\"ahler form $\sqrt{-1}\partial \overline{\partial}\log h$.
Assume it has constant Ricci eigenvalues.
Then
$(\Omega, g_{\Omega})$ is locally holomorphically isometric to $(N, g_{N})$ for $n\geq2$,
where  $(N, g_{N})$ is one of the K\"aher manifolds (1)-(4) in Lemma \ref{lem:E}.
If $(N, g_{N})= (\mathbb{B}^{l}\times\mathbb{CP}^{n-l}, \lambda_{0} g_{-1}\times\lambda_{0} g_{1})$, then
  for any nonempty open subset $U\subset \Omega$, there exists a holomorphic mapping $\Phi$ such that   $g_{\Omega}|_{U}=\lambda_{0}\Phi^{*}(g_{-1}\oplus g_1)$.
  Shrinking $U$ sufficiently, $\Phi$ becomes injective on $U$ and maps $U$ onto a neighbourhood $V=\Phi(U)$. Clearly $\Phi^{-1}:V\subset\mathbb{B}^n\times\mathbb{CP}^n\to \Omega$ is also a local holomorphic isometry.
Since $\mathbb{B}^n\times\mathbb{CP}^n$ is simply connected and complete, and $(\Omega, g_{\Omega})$ is assumed complete, by de Rham's result and Lemma 1 in \cite{Lu1966}, $\Phi^{-1}$ extends to a holomorphic map $\widetilde \Phi^{-1}:\mathbb{B}^n\times\mathbb{CP}^n \to \Omega$.
In particular, $\mathbb{CP}^n$ is a compact submanifold of the pseudoconvex domain $\Omega$.  It is  impossible.
Lemma \ref{lem:E} implies the simply connected complete K\"ahler manifold $(\Omega, g_{\Omega})$ is one of the complex space forms.
Since the domain  $\Omega$ is  bounded and pseudoconvex,  $(\Omega, g_{\Omega})$ must be $(\mathbb{B}_{n}, \lambda_{0} g_{-1})$.
Moreover, the biholomorphic map between the bases can induce a biholomorphic map between the Hartogs domains.
Thus we know $B(\Omega, k)$ is biholomorphic to the domain
$\{(z, \zeta)\in\mathbb{C}^{n+1}: |\zeta|^{2p}+||z||^{2}<1\},$ where the number $p=\frac{\lambda_{0}}{n+1}>0$.
After that, by Proposition 1.9 in \cite{Ebenfelt2023}, the parameter
$p=1$ and $\lambda_{0}=-(n+1)$. Finally, we know $ B(\Omega, k)\cong \mathbb{B}^{n+k}$.
\end{proof}

\subsection{A ball bundle}
Let  $\pi:(L, h)\rightarrow M$ be a negative Hermitian line bundle over an $n$-dimensional complex manifold $M$, so that the dual bundle $(L^*, h^{-1})$ induces a K\"ahler metric $g_{M}$ on $M$, where the K\"ahler form is given by $\omega_{M}=-\frac{\sqrt{-1}}{2}\partial \bar\partial\log h^{-1}$.
For any fixed $k\in \mathbb{Z}^{+}$, set
$(E_{k}, H_{k}) = (L, h)\oplus \cdots\oplus (L, h).$
There are $k$ copies of  Hermitian line  bundle $(L, h)$ on the right hand side.
 The ball bundle is defined by
\begin{equation}\label{ball}
 B(E_{k}) := \{\zeta \in E_{k} : \|\zeta\|_{H_{k}}^{2} < 1\}.
\end{equation}
The boundary of $B(E_{k}) $ is given by
$ \partial  B(E_{k})=\partial M\cup  S(E_{k}),$
where $M$ is identified with the zero section of the vector bundle, and $S(E_{k})$ is the sphere bundle
\begin{equation}\label{Sph}
S(E_{k})=\{\zeta \in E_{k} : \|\zeta\|_{H_{k}}^{2} = 1\}.
\end{equation}
It is well known that any point in $S(E_{k})$ is
strictly pseudoconvex (see Proposition 5.3 in \cite{Coevering2012}).
Denote by $g_{M}$ the induced K\"ahler metric of $\omega_M$.
Define a $(1,1)$-form as follows:
\begin{equation}\label{omega Dk}
\omega_{B(E_{k})}:=\pi^{*}(\omega_M)-\frac{\sqrt{-1}}{2}\partial\bar\partial \log(1-\|\zeta\|_{H_{k}}^{2}) \ \text{on} \ E_{k}.
\end{equation}
By Lemma 3 in \cite{Hao2026}, it induces a complete K\"ahler metric on $B(E_{k})$ if $g_{M}$ is complete. We  denote it by $g_{B(E_{k})}$.
We can give a  classification of the spherical type  sphere bundles  \eqref{Sph}.

\begin{theorem}\label{thm s}
Let $(M, g_{M})$ be a K\"ahler manifold.
Let $ B(E_{k}) $ be the ball bundle defined by \eqref{ball}.
Assume that the rank $k\geq 2$. If  $S(E_{k})$  is spherical, then
 $(M, g_{M})$ is locally holomorphically isometric to
 $(\mathbb{B}_{n},  g_{hyp})$,
  where
  $g_{hyp}= -\sum_{i,j=1}^{n}\frac{\partial^{2}\log(1-||z||^{2})}{\partial z\bar\partial \bar z}dz_{i}\otimes \overline{dz}_{j}$.
\end{theorem}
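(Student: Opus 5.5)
Since $S(E_k)$ is spherical and strictly pseudoconvex, I would pass to its Kähler cone structure and then to the ball bundle metric. Locally near a point of $S(E_k)$, sphericity means $S(E_k)$ is CR-equivalent to a piece of $\partial\mathbb{B}^{n+k}$. The key is to turn this into a curvature condition on $g_{B(E_k)}$. The defining function $\rho=\|\zeta\|^2_{H_k}-1$ is invariant under the $U(k)$-action on the fibers, and $-\log(-\rho)$ is the potential in \eqref{omega Dk}. Following Webster \cite{Webster1977}, the Chern--Moser tensor of the level sets $\{\rho=c\}$ is computed from the Bochner tensor of the Kähler metric $\pi^*\omega_M-\frac{\sqrt{-1}}{2}\partial\bar\partial\log(-\rho)$ restricted to the holomorphic tangent directions. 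Here I would use the fact that all level sets $\|\zeta\|^2_{H_k}=c$ are biholomorphic to one another under fiber dilation, which preserves this metric up to the structure. So I would first show that sphericity of $S(E_k)$ forces the Bochner tensor of $g_{B(E_k)}$ to vanish.

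The second step computes the curvature, Ricci and scalar curvature of $g_{B(E_k)}$ at a point $p_0=(z_0,\zeta)$. I would use normal coordinates on $M$ as in Lemma \ref{h sect}, with $h(z_0)=1$ and $dh(z_0)=0$, and rotate the fiber coordinates by $U(k)$ so that $\zeta=(\zeta_1,0,\dots,0)$. The metric then splits into three blocks: the base block $\frac{1}{1-|\zeta|^2}g_M$, the radial fiber direction with weight $(1-|\zeta|^2)^{-2}$, and the $k-1$ transverse fiber directions with weight $(1-|\zeta|^2)^{-1}$. The curvature should take the form
\[
R_{B}=-2\Bigl(g_B^2-\tfrac{1}{1-|\zeta|^2}\bigl(g_M^2+\tfrac12 R_M\bigr)\Bigr),
\]
the analogue of Lemma \ref{h sect}, with similar analogues of Lemma \ref{ric} and of the scalar curvature lemma.

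Next I would set the Bochner tensor to zero and compare coefficients, as in the proof of Lemma \ref{lem:Boch}. This is where $k\ge2$ is used. The transverse fiber directions $d\zeta_2,\dots$ together with the base directions give mixed terms $|dz|^2|d\zeta_2|^2$. Their coefficients involve $Ric_M$ and $s_M$ with different powers of $(1-|\zeta|^2)$ than the pure base terms, so varying $|\zeta|$ should separate the conditions. From the $|d\zeta|^4$ and mixed coefficients I expect to get $s_M=-n(n+1)$ and $Ric_M=-(n+1)g_M$, and then from the pure base terms $g_M^2+\frac12 R_M=0$, i.e.\ $H(g_M)\equiv-4$. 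Then $g_M$ has constant holomorphic sectional curvature $-4$, hence is locally holomorphically isometric to $(\mathbb{B}_n,g_{hyp})$; this also excludes the flat and positive models that appear in Lemma \ref{lem:E} when $k=1$.

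The main obstacle is the first step: justifying that local sphericity of the single hypersurface $S(E_k)$ implies vanishing of the Bochner tensor of $g_{B(E_k)}$ everywhere, not just near the boundary. I would try Webster's formula, which expresses the Chern--Moser tensor of $\{\rho=0\}$ as the Bochner tensor of the metric with potential $-\log(-\rho)$ restricted to the holomorphic tangent space. The homogeneity of $\rho$ under fiber scaling $\zeta\mapsto t\zeta$ transports the vanishing to all level sets and hence to all of $B(E_k)$ minus the zero section. Since Bochner tensors are real-analytic, vanishing then extends across the zero section. I would also check that the components of the Bochner tensor in the radial direction vanish automatically because of the $U(k)$ symmetry, so that the restriction to the holomorphic tangent space loses no information.
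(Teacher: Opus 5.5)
Your Step 1 is where the argument fails, and the failure is substantive. The implication ``$S(E_k)$ spherical $\Rightarrow$ $g_{B(E_k)}$ Bochner-flat'' holds for $k\ge2$ only a posteriori. Your argument for it never uses $k\ge2$, and for $k=1$ it is false. Take $M=\mathbb{C}^n$ and $h=e^{\|z\|^2}$, so $g_M$ is flat. In the coordinate $w=\log\zeta$, $S(L)=\{2\,\mathrm{Re}\,w+\|z\|^2=0\}$ is the Heisenberg hypersurface, hence spherical. Yet the proof of Lemma \ref{lem:Boch} shows that the $|d\zeta|^4$-component of the Bochner tensor of $g_D$ is $\frac{2(n(n+1)+s_M)}{(n+2)(n+3)(1-|\zeta|^2)^3}$. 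With $s_M=0$ this is nonzero. This also refutes your claim that the radial components vanish by symmetry.

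Neither ingredient you invoke does what you need. Webster's theorem concerns a hypersurface with a free transverse circle action and the K\"ahler metric on its \emph{orbit space}: the unit circle bundle of a negative line bundle is spherical iff the base, with the curvature metric, is Bochner--K\"ahler. It gives no identity between the Chern--Moser tensor and the Bochner tensor of $-\frac{\sqrt{-1}}{2}\partial\bar\partial\log(-\rho)$ on the domain the hypersurface bounds. The dilation $\zeta\mapsto t\zeta$ is a CR equivalence between level sets. But it pulls $-\log(1-\|\zeta\|^2_{H_k})$ back to $-\log(1-t^2\|\zeta\|^2_{H_k})$, so it is not an isometry of $g_{B(E_k)}$ and transports no curvature information.

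Lemma \ref{lem:Boch} already carries out your Steps 2--3 for $k=1$. So if Step 1 held, you would have excluded the flat and projective models of Lemma \ref{lem:E} even for line bundles, which is false. The hypothesis $k\ge2$ must enter at the step producing Bochner-flatness, not in the coefficient comparison.

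The paper applies Webster one level down. It writes $B(E_k)$ as the unit disk bundle of $L_k\to B(E_{k-1})$ with $\widetilde h_k=h(1-\|\zeta'\|^2_{H_{k-1}})^{-1}$. The curvature form of this bundle is $\omega_{B(E_{k-1})}$, and $\omega_{D(L_k)}=\omega_{B(E_k)}$.
\begin{itemize}
\item Its circle bundle $S(L_k)=S(E_k)\setminus S(E_{k-1})$ is an open piece of $S(E_k)$, hence spherical.
\item Webster then gives that the \emph{base} $(B(E_{k-1}),g_{B(E_{k-1})})$ is Bochner--K\"ahler.
\item Because $k-1\ge1$, this base is itself a disk bundle over $B(E_{k-2})$. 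So Lemma \ref{lem:Boch}, which is essentially your coefficient comparison, yields $H\equiv-4$ on it.
\item Lemma \ref{h sect} then carries $H\equiv-4$ down the tower to $M$.
\end{itemize}
For $k=1$ the base of the circle bundle is $M$ itself, and only Bochner-flatness of $g_M$ follows. That is exactly why Lemma \ref{lem:E} contains other models.

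Alternatively, $S(E_k)$ is the unit circle bundle of the tautological bundle over $\mathbb{P}(E_k)\cong M\times\mathbb{CP}^{k-1}$ with the product metric. Bochner-flatness of that product forces $g_M$ to be complex hyperbolic when $k\ge2$.
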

\begin{proof}
Firstly, we will prove that the ball bundle $B(E_{j})$ can be seen as a unit disk bundle over $B(E_{j-1})$ for $1\leq j\leq k$.
Actually, $E_{k}$ is a line bundle over  $E_{k-1}$.
Restrict $E_{k}$  to $B(E_{k-1})$, and denote it by $\pi_{k}:L_{k}\rightarrow B(E_{k-1})$.
Since $E_{k}=E_{k-1}\oplus L$,  for $\zeta\in E_{k}$, we have
$\zeta=\zeta'\oplus \zeta_k$, where $\zeta'\in E_{k-1}$ and $\zeta_{k}$ is the  fiber with  dimension one.
Define $\widetilde{h}_{k}=h(1-\|\zeta'\|^{2}_{H_{k-1}})^{-1}$ as a metric on the line bundle $L_k$.
The curvature tensor for the metric $\widetilde{h}_{k}^{-1}$ on dual bundle $E^{*}_{k-1}$ is
$$\Theta=-\partial\overline{\partial}\log \widetilde{h}_{k}^{-1}=\partial \bar \partial \log h-\partial\bar\partial \log(1-\|\zeta'\|_{H_{k-1}}^{2}), $$ i.e.,
$\frac{\sqrt{-1}}{2}\Theta=\omega_{B(E_{k-1})}.$
It implies that the Hermitian line bundle $(L_{k}, \widetilde{h}_{k})$ over  $B(E_{k-1})$ is negative. Define a
 disk bundle:
 \begin{equation}\label{equ:DLk}
D(L_{k})=  \left\{\zeta_{k} \in L_{k}: |\zeta_{k}|_{\widetilde{h}_{k}}^{2} < 1, \zeta' \in B(E_{k-1})\right\},
 \end{equation}
where $B(E_{0})$ denotes the manifold $M$. Then we have
$D(L_{k})= B(E_{k})$.
The boundary of $D(L_{k})$ is given by
$$ \partial  D(L_{k})=\partial B(E_{k-1})\cup  S(L_{k}),$$
where
$S(L_{k})=\left\{\zeta_{k} \in L_{k}: |\zeta_{k}|_{\widetilde{h}_{k}}^{2} = 1, \zeta' \in B(E_{k-1})\right\}.$
 Since $\partial B(E_{k-1})=\partial M\cup S(E_{k-1})$, we know $$ \partial  D(L_{k})=\partial M\cup S(E_{k-1})\cup  S(L_{k}).$$
Notice that
 $\partial  D(L_{k})=\partial B(E_{k})=\partial M \cup  S(E_{k}).$ We get $S(E_{k})=S(E_{k-1})\cup  S(L_{k})$, i.e.,
 $$S(L_{k})=S(E_{k})\setminus S(E_{k-1}),$$ where  $S(E_{k-1})$ is $(2n+2k-3)$-dimensional real hypersurface.
Define  a
$(1,1)$-form as follows:
  $$\omega_{D(L_{k})}=\pi_{k}^{*}(\omega_{B(E_{k-1})})-\frac{\sqrt{-1}}{2}\partial\bar\partial \log(1-|\zeta_{k}|_{\widetilde{h}_{k}}^{2}),
$$
Then we have $\omega_{D(L_{k})}=\omega_{B(E_{k})}.$
Repeat this process, we can see that $B(E_{j})$ is a unit disk bundle over $B(E_{j-1})$ for $1\leq j\leq k$.
Moreover,  $\omega_{B(E_{j-1})}=\omega_{D(L_{j-1})}=\omega_{D(L_{j})}|_{M}=\omega_{B(E_{j})}|_{M}$.

Next, we prove the necessity.
If $S(E_{k})$ is locally spherical, then $S(L_{k})$ is locally spherical.
The work of Webster \cite{Webster1977} (see Proposition 4 in \cite{Wang2019}) shows that $S(L_{k})$ is spherical if and
only if $(D(L_{k-1}), \omega_{D(L_{k-1})})$ is a Bochner-K\"ahler manifold.
Then $(B(E_{k-1}), \omega_{B(E_{k-1})})$ is a Bochner-K\"ahler manifold.
By Lemma \ref{lem:Boch}, we know it is locally holomorphically isometric to $(\mathbb{B}_{n+k-1}, g_{hyp})$.
By repeatedly applying Lemma \ref{h sect}, we obtain $(B(E_{j}), \omega_{B(E_{j})})$ is locally holomorphically isometric to $(\mathbb{B}_{n+j}, g_{hyp})$ for $j=1,\cdots,k-1$,  and   $(M, g)$ is locally holomorphically isometric to $(\mathbb{B}_{n}, g_{hyp})$.
\end{proof}

\begin{theorem}\label{thm:4}
Let $\Omega$ be a simply-connected bounded pseudoconvex  domain in $\mathbb{C}^{n}$.
 Let
 $$B(\Omega, k)=\left\{(z,\zeta)\in \Omega\times \mathbb{C}^{k}:\|\zeta\|^{2}<h^{-1}(z)\right\}$$
  be a bounded pseudoconvex  Hartogs domain over  $\Omega$.
If  $\frac{\sqrt{-1}}{2}\partial \overline{\partial}\log h$  induces  a complete K\"ahler metric on $\Omega$ and the fiber dimension $k\geq 2$,
then the Bergman metric of $B(\Omega, k)$ is K\"ahler-Einstein if and only if $B(\Omega, k)$ is biholomorphically equivalent to
$\mathbb{B}^{n+k}$.
\end{theorem}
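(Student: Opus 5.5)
The sufficiency is immediate. If $B(\Omega,k)\cong\mathbb{B}^{n+k}$, the Bergman metric of the ball is Kähler-Einstein. Biholomorphisms are isometries of Bergman metrics, so the Bergman metric of $B(\Omega,k)$ is Kähler-Einstein as well. For the necessity I follow the three-step strategy announced in the introduction.

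\emph{Step 1: the boundary is spherical.} By Proposition 5.3 in \cite{Coevering2012}, every point of the sphere bundle part
\[
\partial_s B(\Omega,k)=\{(z,\zeta)\in\Omega\times\mathbb{C}^k:\|\zeta\|^2=h^{-1}(z)\}=S(E_k)
\]
is strictly pseudoconvex. Assume the Bergman metric is Kähler-Einstein. Since $B(\Omega,k)$ is a bounded pseudoconvex domain, Huang and Li's result \cite{HuangL2020} applies near each point of $S(E_k)$ and shows that $S(E_k)$ is locally spherical.

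\emph{Step 2: the base is locally the ball.} Here $E_k$ is the trivial bundle $\Omega\times\mathbb{C}^k$ with fiber metric $h\,I_k$, and $k\ge 2$. Theorem \ref{thm s} then gives that $(\Omega,g_\Omega)$ is locally holomorphically isometric to $(\mathbb{B}_n,g_{hyp})$, where $g_\Omega$ is the metric induced by $\frac{\sqrt{-1}}{2}\partial\bar\partial\log h$. To globalize, I argue as in the proof of the corollary for $D(\Omega)$. Both $(\Omega,g_\Omega)$ and $(\mathbb{B}_n,g_{hyp})$ are complete, simply connected and real analytic. A local holomorphic isometry $\Phi\colon U\to\mathbb{B}_n$ therefore extends, by analytic continuation along paths (de Rham, Lemma 1 of \cite{Lu1966}), to a global holomorphic isometry $\Phi\colon\Omega\to\mathbb{B}_n$. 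This map is a covering, and since the target is simply connected it is biholomorphic. Consequently $\partial\bar\partial\log h=\partial\bar\partial\log\big((1-\|\Phi\|^2)^{-1}\big)$, so
\[
h=\frac{|e^{f}|^{2}}{1-\|\Phi(z)\|^2}
\]
for some holomorphic $f$ on $\Omega$. A global $f$ exists because $\Omega$ is simply connected, so the pluriharmonic function $\log h+\log(1-\|\Phi\|^2)$ is the real part of a holomorphic function.

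\emph{Step 3: building the biholomorphism.} I define
\[
F(z,\zeta)=\big(\Phi(z),\,e^{f(z)}\zeta\big).
\]
Then $\|\zeta\|^2h(z)<1$ holds if and only if $\|e^{f}\zeta\|^2+\|\Phi(z)\|^2<1$. Hence $F$ is a biholomorphism from $B(\Omega,k)$ onto $\{(w,\xi):\|w\|^2+\|\xi\|^2<1\}=\mathbb{B}^{n+k}$, with inverse $(w,\xi)\mapsto(\Phi^{-1}(w),e^{-f(\Phi^{-1}(w))}\xi)$.

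I must also check normalization. Theorem \ref{thm s} yields exactly $g_{hyp}$, with no scaling parameter, because the Bochner-flat criterion of Lemma \ref{lem:Boch} forces holomorphic sectional curvature $-4$. So no Thullen-type exponent $p\neq1$ appears, in contrast with the $k=1$ case. If a scaling constant did survive, I would instead obtain a domain $\|w\|^2+\|\xi\|^{2p}<1$ and exclude $p\neq1$ using Proposition 1.10 of \cite{Ebenfelt2024}.

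The main obstacle is the globalization in Step 2: passing from a local isometry to a global biholomorphism $\Omega\to\mathbb{B}_n$. This step needs completeness of $g_\Omega$, real analyticity of Kähler metrics of constant holomorphic sectional curvature, and simple connectivity of $\Omega$. A secondary point is making sure that local sphericity on the possibly incomplete piece $S(L_k)$ used inside Theorem \ref{thm s} is exactly what Huang–Li provide.
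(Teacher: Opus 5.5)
Your proposal is correct and follows the paper's proof. The steps match: Huang--Li sphericity of the strictly pseudoconvex part $\|\zeta\|^{2}=h^{-1}(z)$, then Theorem~\ref{thm s} to make $(\Omega,g_\Omega)$ locally isometric to $(\mathbb{B}_n,g_{hyp})$, then the explicit map $(z,\zeta)\mapsto(\Phi(z),e^{f(z)}\zeta)$ onto $\mathbb{B}^{n+k}$. The differences only streamline: you spell out the globalization of the local isometry by analytic continuation and completeness, which the paper simply asserts here (it ran the de Rham/Lu argument in the earlier corollary), and your factor $e^{f}$ gives the inverse at once, whereas the paper builds a second map with $f^{*}$ and uses a maximum-modulus argument to check it is the inverse.
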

\begin{proof} We only need to prove the necessity, since the sufficiency is obvious.
According to the discussion of the unit ball bundle \eqref{ball},  any point in the boundary
$\partial_{s}(B(\Omega, k))=\{(z,\zeta)\in \Omega\times \mathbb{C}^{k}:\|\zeta\|^{2}=h^{-1}(z)\}$ is strictly pseudoconvex.
If the Bergman metric of $B(\Omega, k)$ is K\"ahler-Einstein, by Huang and Li'result in \cite{HuangL2020}, the boundary $\partial_{s}(B(\Omega, k))$ is locally spherical.

Let $g_{\Omega}$  be the complete K\"ahler metric with K\"ahler form $\omega_{\Omega}=\frac{\sqrt{-1}}{2}\partial \overline{\partial}\log h$.
For any $j\in \mathbb{Z}^{+}$,
the Hartogs domain $B(\Omega, j)=\{(z,\zeta)\in \Omega\times \mathbb{C}^{j}:\sum_{i=1}^{j}|\zeta_{i}|^{2}h(z)<1\}$ can be  seen as a unit ball bundle in the Hermitian vector bundle $(\Omega\times \mathbb{C}^{j}, h\oplus\cdots\oplus h)$ over $(\Omega, g_{\Omega})$.
Let $g_{j}$   be the complete K\"ahler metric with K\"ahler form
$$\omega_{j}=\omega_{\Omega}-\frac{\sqrt{-1}}{2}\partial\bar\partial \log(1-\sum_{i=1}^{j}|\zeta_{i}|^{2}h(z)).$$
We rewrite the domain  $B(\Omega, j)$ for $j\geq2$ as follows:
$$
 B(\Omega, j)=\left\{(z,\zeta_{j})\in B(\Omega, j-1) \times \mathbb{C}:|\zeta_{j}|^{2}h(z)(1-\sum_{i=1}^{j-1}|\zeta_{i}|^{2}h(z))^{-1}<1\right\},
$$
and
$
 B(\Omega, 1)=\left\{(z,\zeta_{1})\in \Omega \times \mathbb{C}:|\zeta_{1}|^{2}h(z)<1\right\}.
$
The Hartogs domain $B(\Omega, j)$ for $j\geq2$ can be  seen as a unit disk bundle in the Hermitian line bundle $(B(\Omega, j-1)\times \mathbb{C}, h(z)(1-\sum_{i=1}^{j-1}|\zeta_{i}|^{2}h(z))^{-1})$ over $(B(\Omega, j-1), g_{j-1})$
and $B(\Omega, 1)$ is a  unit disk bundle over $(\Omega, g_{\Omega})$.
By Theorem \ref{thm s}, there exists a biholomorphic mapping $z^{*}=F(z)$ from $\Omega$ to $\mathbb{B}^{n}$ such that $g_{\Omega}=F^{*}g_{hyp}$, i.e.,
$$\partial \bar\partial\log h(z)=-\partial \overline{\partial}\log (1-||F(z)||^{2}) \ \text{on} \ \Omega.$$
Let $z=F^{-1}(z^{*})$ be the inverse mapping of $F$, then  $(F^{-1})^{*} g_{\Omega}=g_{hyp}$ and
$$\partial \bar\partial\log h(F^{-1}(z^{*}))=-\partial \overline{\partial}\log (1-||z^{*}||^{2}) \ \text{on} \ \mathbb{B}^{n+k}.$$
 Remove the operator $\partial \bar\partial$,  there is a holomorphic function $f$ on $\Omega$ and  a holomorphic function $f^{*}$ on $\mathbb{B}^{n}$  such that
 \begin{equation}\label{equ:h F}
 h^{-1}(z)|f(z)|^{2}= (1-||F(z)||^{2}) \
\text{and} \
 h^{-1}(F^{-1}(z^{*}))= (1-||z^{*}||^{2})|f^{*}(z^{*})|^{2}.
 \end{equation}

 Define a holomorphic map
 $\widetilde{F}(z, \zeta):  B(\Omega, k)\rightarrow \mathbb{B}^{n}\times\mathbb{C}^{k}$, $(z^{*}, \zeta^{*})=\widetilde{F}(z, \zeta)$, where
$z^{*}=\widetilde{F}_{1}(z, \zeta)=F(z)$ and $\zeta^{*}=\widetilde{F}_{2}(z,\zeta)=e^{i\theta_{1}}\zeta f(z)$.
 This implies that
 \begin{eqnarray*}
 \|z^{*}\|^{2}+\|\zeta^{*}\|^{2}&=&||F(z)||^{2}+\|\zeta\|^{2}|f(z)|^{2}\\
 &=&1-\left(h^{-1}(z)-\|\zeta\|^{2}\right)|f(z)|^{2}<1.
 \end{eqnarray*}
Hence, we have
  $\widetilde{F}( B(\Omega, k))\subseteq \mathbb{B}^{n+k}.$ Let $\omega_{hyp}$ be the K\"ahler form of $\mathbb{B}^{n+k}$. By \eqref{equ:h F}, we obtain
\begin{eqnarray*}
\widetilde{F}^{*}\omega_{hyp}&=&-\frac{\sqrt{-1}}{2}\partial \overline{\partial}\log\left(1-\|F(z)\|^{2}-\|e^{i\theta_{1}}\zeta f(z)\|^{2}\right)\\
&=&\omega_{\Omega}-\frac{\sqrt{-1}}{2}\partial\bar\partial \log\left(1-\|\zeta\|^{2}h(z)\right)\\
&=&\omega_{B(E_{k})}.
\end{eqnarray*}
We also can
 define another  holomorphic map $\widetilde{F}^{*}(z^{*}, \zeta^{*})$ form  $\mathbb{B}^{n+k}$ to  $\Omega\times\mathbb{C}^{k}$, such that $(z,\zeta)=\widetilde{F}^{*}(z^{*}, \zeta^{*})$, where
$z=\widetilde{F}^{*}_{1}(z^{*}, \zeta^{*})=F^{-1}(z^{*})$ and $\zeta=\widetilde{F}^{*}_{2}(z^{*},\zeta^{*})=e^{i\theta_{2}}\zeta^{*}(f^{*})(z^{*})$.
This implies that
 \begin{eqnarray*}
 \|\zeta\|^{2}h(z)&=&\|\zeta^{*}\|^{2} |f^{*}(z^{*})|^{2}h(F^{-1}(z^{*}))\\
 &=&\|\zeta^{*}\|^{2}(1-||z^{*}||^{2})^{-1}
 <1.
 \end{eqnarray*}
Hence, we have $\widetilde{F}^{*}(\mathbb{B}^{n+k} )\subseteq B(\Omega, k).$
Take $z^{*}=F(z)$ in the second equation of \eqref{equ:h F}, then we have
 $h^{-1}(z)=(1-\|F(z)\|^{2})|f^{*}(F(z))|^{2}$.
 Compare it with the first equation of \eqref{equ:h F}, we have
 $|f^{*}(F(z))|^{2}=|f(z)|^{-2}$.
 By the maximum of the modulus of a holomorphic function, we get $f^{*}(F(z))=e^{i\theta_{3}}f^{-1}(z)$.
For the map $\widetilde{F}^{*}$, we take $\theta_{2}=-\theta^{1}-\theta^{3}$.  Then we get $$\widetilde{F}^{*}\circ F(z,\zeta)=(z,e^{i(\theta_{1}+\theta_{2})}\zeta f^{*}(F(z))f(z))=(z,\zeta).$$ Thus  $\widetilde{F}^{*}$ is the inverse map of $\widetilde{F}$.
Finally, the biholomorphic isometric  map $F$  from $\Omega$ to  $\mathbb{B}^{n}$ induces  a biholomorphic map $\widetilde{F}$  from $B(\Omega, k)$ to  $\mathbb{B}^{n+k}$.
We complete the proof.
 \end{proof}

\begin{remark}
In Theorem \ref{thm:4}, if $h$  is the positive powers of the Bergman kernel function for $\Omega$,
then $g_{\Omega}$ is the Bergman metric up to  scaling.
Recall Lu's well-known uniformization theorem: a bounded domain in $\mathbb{C}^{n}$ with a complete Bergman metric of constant holomorphic sectional curvature is biholomorphic to the Euclidean ball \cite{Lu1966}. Thus $\Omega$ is biholomorphic equivalent to $\mathbb{B}^{n}$.
By the same discussion of Theorem \ref{thm:4},
we get $ B(\Omega, k)\cong \mathbb{B}^{n+k}$.
Hence, the simply-connectedness assumption for the domain $\Omega$   can be removed in this case.
\end{remark}
\begin{remark}
\label{rem:homo B}
The domain $B(\Omega, k)$ is homogeneous if and only if it is biholomorphic to the unit ball.
Actually, it can be proved by Wong-Rosay theorem directly.
Let $P$ be a  point in the strictly pseudoconvex boundary $\{(z,\zeta)\in \Omega\times \mathbb{C}^{k}:\|\zeta\|^{2}=h^{-1}(z)\}$.
Let $Q$, $P_{j}$ be some interior points  in $B(\Omega, k)$ such that $P_{j} \rightarrow P$ when $j \rightarrow \infty$.
Assume that $B(\Omega, k)$ is homogeneous. There exists a familly of $f_{j}\in \mathrm{Aut}(B(\Omega, k))$
such that $f_{j}(Q)=P_{j} \rightarrow P$ as $j \rightarrow \infty$.
Thus $P$ is a boundary orbit
accumulation point for the action of the non-compact group $\mathrm{Aut}(B(\Omega, k))$.
The result follows from Wong-Rosay theorem in \cite{Wong1977}\cite{Rosay1979}.
\end{remark}


\end{document}